\documentclass[a4paper]{cas-sc}

\usepackage{graphicx}
\usepackage{algorithm}
\usepackage{algorithmic}
\usepackage{amsthm}
\usepackage{placeins}

\usepackage{multirow}
\usepackage{makecell}
\usepackage{float}
\usepackage{tikz}

\newtheorem{method}{Method}[section]
\newtheorem{theorem}{Theorem}[section]
\newtheorem{lemma}{Lemma}[section]
\newtheorem{corollary}{Corollary}[section]
\newtheorem{remark}{Remark}[section]
\newtheorem{example}{Example}[section]
\newtheorem{definition}{Definition}[section]

\usepackage[numbers]{natbib}

\def\tsc#1{\csdef{#1}{\textsc{\lowercase{#1}}\xspace}}
\tsc{WGM}
\tsc{QE}

\AtBeginDocument{
	\setlength{\abovedisplayskip}{3pt}
	\setlength{\belowdisplayskip}{3pt}
	\setlength{\abovedisplayshortskip}{2pt}
	\setlength{\belowdisplayshortskip}{2pt}
}
\begin{document}
	\let\WriteBookmarks\relax
	\def\floatpagepagefraction{1}
	\def\textpagefraction{.001}

	\shorttitle{}    
	
	\shortauthors{A. K. S. Sengar, B. Kumar and Deepmala}  
	
	\title [mode = title]{A Relaxed Maximum-Based Normal \texorpdfstring{$S$}{S}-Iteration Method for Generalized Absolute Value Equations}  
	
	\tnotemark[1] 
	
	
	%
	
	\author[1]{Abhishek Kumar Singh Sengar}
	
	\cormark[1]
	
	
	\ead{abhishek47sengar@gmail.com}
	
	
	\credit{Conceptualization, Methodology, Software, Validation, Writing - original draft}
	
	\affiliation[1]{organization={Discipline of Natural Sciences},
		addressline={Indian Institute of Information Technology Design and Manufacturing}, 
		city={Jabalpur},
		postcode={482005}, 
		state={Madhya Pradesh},
		country={India}}
	
	\author[2]{Bharat Kumar}
	
	
	\ead{bharatnishad.kanpu@gmail.com}
	
	
	\credit{Formal analysis, Software, Validation, Writing - review and editing}
	
	\affiliation[2]{organization={Department of Economics},
		addressline={Indian Institute of Technology}, 
		city={Kanpur},
		postcode={208016}, 
		state={Uttar Pradesh},
		country={India}}

	\author[3]{Deepmala}
	
	
	\ead{dmrai23@gmail.com}
	
	
	\credit{Supervision, Formal analysis, Investigation, Writing - review and editing}
	
	\affiliation[3]{organization={Discipline of Natural Sciences},
		addressline={Indian Institute of Information Technology Design and Manufacturing}, 
		city={Jabalpur},
		postcode={482005}, 
		state={Madhya Pradesh},
		country={India}}

	\cortext[1]{Corresponding author}
	
	
	
\begin{abstract}
	A relaxed maximum-based normal $S$-iteration method (RMNSI) is proposed
	for solving generalized absolute value equations (GAVEs). The method combines
	a maximum-based fixed-point formulation with a constant relaxation
	parameter and avoids the selection of an auxiliary matrix. Assuming
	that $A+B$ is non-singular, both stages require linear systems with the
	same coefficient matrix $A+B$, allowing a single factorization to be
	reused throughout the iteration. A single spectral condition is established
	that guarantees unique solvability of the GAVE and global convergence
	from an arbitrary initial vector. An admissible range of the relaxation
	parameter is characterized, and $R$-linear convergence and error
	estimates are derived. Since the global condition can be conservative,
	a local convergence analysis based on the solution sign pattern is
	also presented. Numerical experiments on complementarity-derived, dense mixed-sign,
	and asymmetric ridge-regression problems are presented, and comparisons
	with several recent methods demonstrate the competitive efficiency and
	accuracy of RMNSI. The results further indicate that the preferred
	relaxation parameter depends mainly on the matrix structure and
	diagonal shift, while its dependence on the problem dimension is
	generally weak.
\end{abstract}

	

	\begin{keywords}
		Normal S-iteration \sep Maximum-based \sep Iterative Method \sep  Convergence Analysis \sep Generalized absolute value equations
	\end{keywords}
	
	\maketitle
	
	\section{Introduction}\label{sec1}
Following the equivalent sign convention adopted in this paper, the
generalized absolute value equation (GAVE) studied by Rohn
\cite{Rohn2004} is written as the weakly nonlinear system
\begin{equation}\label{eq:GAVE}
	Ax-B|x|=b,
\end{equation}
where $A,B\in\mathbb{R}^{n\times n}$,
$b\in\mathbb{R}^n$, and $x\in\mathbb{R}^n$ is the unknown vector. Here,
$|x|$ denotes the component-wise absolute value of $x$, which constitutes
a non-smooth piecewise-linear term. When $B=I_n$, where $I_n$ denotes
the identity matrix of order $n$, GAVE~\eqref{eq:GAVE} reduces to the
standard absolute value equation (AVE)
\begin{equation}\label{eq:AVE}
	Ax-|x|=b.
\end{equation}

The importance of absolute value equations arises from their close
connections with several problems in optimization and numerical
analysis \cite{Saad2003}. Mangasarian \cite{Mangasarian2014} established a relationship
between AVEs and linear complementarity problems, while Prokopyev
\cite{Prokopyev2009} investigated equivalent AVE reformulations and
their connections with mixed-integer linear programming. Mangasarian
\cite{Mangasarian2007} also developed the broader framework of absolute
value programming. More recently, GAVEs have appeared as first-order
optimality conditions for asymmetric ridge-regression models
\cite{XieQiHan2025}. These connections, together with the non-smooth and
potentially large-scale structure of the resulting systems, highlight
the need for efficient and reliable numerical methods.

The study of GAVEs has consequently progressed through two closely
connected strands. Theoretical investigations have focused primarily
on the existence, uniqueness, and characterization of solutions. These
include a theorem of the alternatives for uniform unique solvability
\cite{Rohn2004}, conditions for solvability or unsolvability
\cite{Mangasarian2017,Mezzadri2020}, criteria for unique solvability
\cite{Rohn2009,WuShen2021}, and properties of
the solution set \cite{Hladik2023}. Bounds for solutions, perturbation
estimates, error bounds, and condition numbers have also been studied
\cite{Hladik2018,ZamaniHladik2023,WuLi2025}. Comprehensive accounts of
these theoretical developments and the remaining challenges can be
found in \cite{KumarDeepmalaMoosaei2024,HladikEtAl2025}. The second
strand is computational and concerns the construction and convergence
analysis of numerical methods. The two strands complement each other:
solvability and matrix conditions support convergence analysis, while
error bounds provide tools for assessing approximations and justifying
practical stopping criteria.

A broad range of numerical methods has been developed for AVEs and
GAVEs. Newton-type approaches include the generalized Newton method
\cite{Mangasarian2009Newton}, globally and quadratically convergent
schemes \cite{CaccettaQuZhou2011}, inexact semismooth Newton methods
\cite{BelloCruzFerreiraPrudente2016}, and smoothing Newton algorithms
\cite{ChenYuHanMa2025}. Combining generalized Newton techniques with
matrix splitting, Li and Chen \cite{LiChen2025} proposed the GNMS method
for GAVEs and demonstrated its application to asymmetric ridge
regression. A concave-minimization approach has also been investigated
\cite{Mangasarian2007Concave}.

Fixed-point and matrix-splitting techniques form another major class.
Rohn et al.~\cite{RohnHooshyarbakhshFarhadsefat2014} studied the Picard
iteration, while Salkuyeh \cite{Salkuyeh2014} combined it with the HSS
splitting. Ke and Ma \cite{KeMa2017} introduced an SOR-like iteration,
Edalatpour et al.~\cite{EdalatpourHezariSalkuyeh2017} developed a
generalized Gauss-Seidel method for AVEs, and Dong et al.
\cite{DongShaoShen2020} subsequently proposed a new SOR-like scheme.
For GAVEs, Li et al.~\cite{LiLiDou2023} developed a shift-splitting
fixed-point method, while Zhang et al.
\cite{ZhangZhangLiangLiao2023} incorporated momentum acceleration into
a matrix-splitting iteration. Luo et al.~\cite{LuoEtAl2026} extended
the generalized Gauss-Seidel approach from AVEs to GAVEs, and Yu et
al.~\cite{YuEtAl2026} proposed an extended matrix-splitting iteration.

Different reformulations have led to exact and inexact
Douglas-Rachford splitting methods \cite{ChenYuHan2023}, an
inverse-free dynamical system \cite{ChenYangYuHan2021}, and an
alternating-projection method \cite{AlcantaraChenTam2023}. More
recently, Xie et al.~\cite{XieQiHan2025} introduced a randomized
iterative framework for square and rectangular GAVEs that contains the
Picard and randomized Kaczmarz iterations as particular cases.
	
Recently, Wu et al.~\cite{WuHanLi2024} introduced a new class of
maximum-based iterations for GAVEs. By rewriting the absolute-value
term through
$
|x|=2\max\{0,x\}-x,
$
they obtained a flexible framework involving a positive semi-definite
auxiliary matrix $\Omega$. Using the same maximum-based representation, Wang and Mu
\cite{WangMu2025} applied a splitting of $A+B$ to construct two
fixed-point methods. Fang et al.~\cite{FangWangQiao2025}
subsequently removed the auxiliary matrix and coupled $A-B$ and $A+B$
through a scalar relaxation parameter.

When $A+B$ is non-singular, the maximum-based reformulation of
GAVE~\eqref{eq:GAVE} defines a fixed-point operator. This structure
allows established fixed-point processes to be considered for its
solution. In particular, the normal $S$-iteration studied by Sahu
\cite{sahu2011applications} forms an intermediate point from the
current iterate and its image before evaluating the operator again.
To the best of our knowledge, normal $S$-iteration has not previously
been applied to solve GAVEs.

Bringing these ideas together, we propose the relaxed maximum-based
normal $S$-iteration method, abbreviated as RMNSI. By setting
$\Omega=O_n$, the auxiliary matrix is removed from the maximum-based
formulation. The resulting fixed-point operator is evaluated first at
the current iterate and then at a relaxed intermediate point. A
constant parameter $\theta\geq0$ controls this intermediate update and
permits both under-relaxed and over-relaxed choices. The main
contributions of this work are summarized as follows:

\begin{enumerate}
	\item The normal \(S\)-iteration process is used within the
	maximum-based fixed-point framework to construct RMNSI without requiring
	the selection of an auxiliary matrix. Both stages require linear systems with the same coefficient matrix $A+B$; therefore, a single factorization can be computed once
	and reused throughout the iteration.
	
\item Under the non-singularity of \(A+B\), a single spectral condition
is established that simultaneously guarantees the unique solvability of
the GAVE~\eqref{eq:GAVE} and the global convergence of RMNSI from any initial vector.
The admissible range of \(\theta\), \(R\)-linear convergence, and
residual- and iteration-based error estimates are derived. A local
convergence condition based on the solution sign pattern is also
obtained.
	
	\item Numerical experiments on LCP-derived, dense, and
	ridge-regression GAVEs are presented to compare RMNSI with recent
	methods. The effects of $\theta$, the diagonal shift, and the problem
	dimension are examined, and the observed behaviour is related to the
	global and local convergence conditions.
\end{enumerate}
	
The remainder of the paper is arranged as follows.
Section~\ref{sec2} collects the notation and preliminary results.
Section~\ref{sec3} develops the RMNSI method, and
Section~\ref{sec4} presents its convergence analysis.
Section~\ref{sec5} reports the numerical experiments, while
Section~\ref{sec6} concludes the paper.

\section{Preliminaries}\label{sec2}

This section introduces the notation and auxiliary results used in the
construction and convergence analysis of the proposed method. Vector
and matrix inequalities, as well as the maximum and minimum operators,
are understood component-wise. The symbols $|x|$ and $|G|$ denote the
component-wise absolute values of a vector $x$ and a matrix $G$,
respectively. For a square matrix $G$, its spectrum and spectral radius are denoted by
$\sigma(G)$ and $\rho(G)$, respectively. For
$x=(x_i)\in\mathbb{R}^n$ and
$G=(g_{ij})\in\mathbb{R}^{n\times n}$, the vector norms and the induced
matrix infinity norm used in this paper are defined by
$
	\|x\|_2
	=
	\left(\sum_{i=1}^n |x_i|^2\right)^{1/2},
	~
	\|x\|_\infty
	=
	\max_{1\leq i\leq n}|x_i|,
	~
	\|G\|_\infty
	=
	\max_{1\leq i\leq n}\sum_{j=1}^n|g_{ij}|.
$
Furthermore, $I_n$, $O_n$, and $\boldsymbol{e}$ denote the identity
matrix of order $n$, the zero matrix of order $n$, and the vector of all
ones in $\mathbb{R}^n$, respectively. All vector norms on
$\mathbb{R}^n$ are equivalent.

We first recall the definitions used in the subsequent analysis.

For a matrix $G=(g_{ij})\in\mathbb{R}^{n\times n}$, its comparison
matrix $\langle G\rangle=(\widehat g_{ij})$ is defined by
$
	\widehat g_{ij}
	=
	\begin{cases}
		|g_{ii}|, & i=j,\\
		-|g_{ij}|, & i\neq j.
	\end{cases}
$

A matrix is called a $Z$-matrix if all its off-diagonal entries are
non-positive. A $Z$-matrix is called a non-singular $M$-matrix if it is
non-singular and its inverse is non-negative. A matrix $G$ is called an
$H$-matrix if $\langle G\rangle$ is a non-singular $M$-matrix. A
splitting $P=M-N$ is called an $M$-splitting if $M$ is a non-singular
$M$-matrix and $N\geq O_n$; see
\cite{BermanPlemmons1994,Varga2000}.

\begin{definition}[{\cite{OrtegaRheinboldt2000}}]
	\label{def:R-linear}
	A sequence $\{x^{(k)}\}$ converging to $x^*$ is said to converge
	$R$-linearly if there exist constants $c>0$ and $r\in(0,1)$ such that
	\begin{equation*}
		\|x^{(k)}-x^*\|
		\leq
		c r^k,
		\qquad k=0,1,2,\ldots,
	\end{equation*}
	where $\|\cdot\|$ is any fixed vector norm on $\mathbb{R}^n$.
\end{definition}

\begin{definition}[Normal \(S\)-iteration
	\cite{sahu2011applications}]
	\label{def:normal-S}
	Let $\mathcal{C}$ be a nonempty convex subset of a normed linear space
	$X$, and let $T:\mathcal{C}\rightarrow\mathcal{C}$ be an operator.
	Given $x^{(0)}\in\mathcal{C}$, the normal $S$-iteration process is
	defined by
	\begin{equation*}
		x^{(k+1)}
		=
		T\left(
		(1-\lambda_k)x^{(k)}
		+\lambda_kT\bigl(x^{(k)}\bigr)
		\right),
		\qquad k=0,1,2,\ldots,
	\end{equation*}
	where $\{\lambda_k\}\subset(0,1)$.
\end{definition}

The following auxiliary results will be used in the construction and
convergence analysis of the proposed method.

\begin{lemma}[Perron-Frobenius theorem
	{\cite{BermanPlemmons1994}}]
	\label{lem:perron-frobenius}
	Let $G\in\mathbb{R}^{n\times n}$ be non-negative. Then
	$\rho(G)$ is an eigenvalue of $G$ and $G$ has a corresponding
	non-negative eigenvector. Moreover, if
	$
		O_n\leq G\leq H,
	$
	then
	$
		\rho(G)\leq\rho(H).
	$
\end{lemma}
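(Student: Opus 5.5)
This is a classical result, which the paper cites from \cite{BermanPlemmons1994} rather than proves. I would derive both parts from the Perron--Frobenius theorem for positive matrices together with a continuity argument.

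\emph{First assertion.} First assume $G>O_n$ entrywise. Consider the simplex $\Delta=\{x\geq 0:\ \boldsymbol{e}^Tx=1\}$ and the Collatz--Wielandt function $r(x)=\min_{x_i>0}(Gx)_i/x_i$. Maximising $r$ over $\Delta$ gives a value $r^*$ attained at some $x^*\geq0$ with $Gx^*\geq r^*x^*$. If the inequality were strict somewhere, applying $G>O_n$ would give $G(Gx^*)>r^*Gx^*$ componentwise. This contradicts maximality, so $Gx^*=r^*x^*$. For any eigenpair $Gy=\lambda y$, taking moduli gives $|\lambda||y|\leq G|y|$, hence $|\lambda|\leq r(|y|/\boldsymbol{e}^T|y|)\leq r^*$. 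Therefore $r^*=\rho(G)$, and it is an eigenvalue with a non-negative eigenvector.

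For a general $G\geq O_n$, set $G_\varepsilon=G+\varepsilon \boldsymbol{e}\boldsymbol{e}^T>O_n$ and pick normalised eigenvectors $x_\varepsilon\in\Delta$ with $G_\varepsilon x_\varepsilon=\rho(G_\varepsilon)x_\varepsilon$. The set $\Delta$ is compact and the spectral radius depends continuously on the matrix entries, since eigenvalues are continuous. Passing to a subsequence $\varepsilon\to0$ gives $x_\varepsilon\to\bar x\in\Delta$ and $G\bar x=\rho(G)\bar x$, as required.

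\emph{Monotonicity.} Let $O_n\leq G\leq H$. The cleanest route avoids eigenvectors: induction gives $O_n\leq G^k\leq H^k$ entrywise, so $\|G^k\|_\infty\leq\|H^k\|_\infty$, since for non-negative matrices the $\infty$-norm is monotone in the entries. Gelfand's formula $\rho(G)=\lim_k\|G^k\|_\infty^{1/k}$ then yields $\rho(G)\leq\rho(H)$.

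\emph{Main obstacle.} The delicate step is the first assertion when $G$ is merely non-negative, possibly reducible. The perturbation/limit argument handles this, but it relies on continuity of $\rho$, which I would justify via continuity of polynomial roots. In the positive case, the strict-inequality contradiction also needs care because $r$ is only upper semicontinuous on $\Delta$. I would address this by maximising $r$ over the compact set $G\Delta$, on which all vectors are positive and $r$ is continuous.
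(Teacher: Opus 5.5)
The paper gives no proof of this lemma. It is quoted from Berman and Plemmons and used as a black box twice: in Theorem~\ref{thm:global-convergence}, to know that $\eta=\rho(\mathcal{K})$ is an eigenvalue of $\mathcal{K}$, so that $p(\eta)\in\sigma(\mathcal{H}_\theta)$; and in Theorem~\ref{thm:M-matrix-condition}, to get $\eta\le\widehat{\eta}$.

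Your proposal is a correct, self-contained proof along standard textbook lines. Collatz--Wielandt maximisation handles $G>O_n$. The perturbation $G+\varepsilon\boldsymbol{e}\boldsymbol{e}^T$, together with compactness of $\Delta$, extends this to general $G\ge O_n$. For monotonicity, $O_n\le G^k\le H^k$ combined with Gelfand's formula suffices, and this part needs no eigenvector information. The citation buys brevity; your argument buys independence from the reference.

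Two small tightenings are available.
\begin{itemize}
\item Upper semicontinuity of $r$ already guarantees that the maximum over $\Delta$ is attained, because for $x\in\Delta$ one has $r(x)=\max\{t: Gx\ge tx\}$ and this is a closed condition. The detour through $G\Delta$ is therefore optional. If you do take it, record that $r(Gx)\ge r(x)$ on $\Delta$. This ensures the maximum over $G\Delta$ still dominates $r(|y|/\boldsymbol{e}^T|y|)$ in your eigenvalue comparison, which is otherwise justified only for vectors lying in $G\Delta$.
\item If you prove monotonicity first, you no longer need continuity of polynomial roots. The quantity $\rho(G_\varepsilon)$ is non-increasing as $\varepsilon\downarrow0$ and bounded below by $\rho(G)$. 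Its limit $\rho_0$ satisfies $G\bar x=\rho_0\bar x$ with $\bar x\neq0$, so $\rho_0\le\rho(G)$, and hence $\rho_0=\rho(G)$.
\end{itemize}
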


\begin{lemma}[Standard spectral properties
	{\cite{BermanPlemmons1994,Horn2013}}]
	\label{lem:spectral-properties}
	Let \(G\in\mathbb{R}^{n\times n}\). For a polynomial
	\(p(t)=\sum_{j=0}^{d}a_jt^j\), define
	\(p(G)=\sum_{j=0}^{d}a_jG^j\), where \(G^0=I_n\), and let
	$
	p\bigl(\sigma(G)\bigr)
	=
	\{p(\lambda):\lambda\in\sigma(G)\}.
	$
	Then the following statements hold:
	\begin{enumerate}
		\renewcommand{\labelenumi}{(\roman{enumi})}
		
		\item For every polynomial \(p\),
		$
		\sigma\bigl(p(G)\bigr)
		=
		p\bigl(\sigma(G)\bigr).
		$
		
		\item For every \(s>\rho(G)\) and any fixed induced matrix norm, there
		exists a constant \(c_s>0\) such that
		$
		\|G^k\|
		\leq
		c_s s^k,
		~ k=0,1,2,\ldots.
		$
		
		\item If \(\rho(G)<1\), then there exists a vector norm
		\(\|\cdot\|_*\) on \(\mathbb{R}^n\) whose induced matrix norm, also
		denoted by \(\|\cdot\|_*\), satisfies
		$
		\|G\|_*<1.
		$
		
		\item For every induced matrix norm \(\|\cdot\|\),
		$
		\rho(G)\leq\|G\|.
		$
		
	\end{enumerate}
\end{lemma}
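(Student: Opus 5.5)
These four statements are standard facts, so the plan is to derive each from the Jordan canonical form together with elementary norm estimates.

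For (i), I write $G=SJS^{-1}$ over $\mathbb{C}$, with $J$ upper triangular and the eigenvalues of $G$ on its diagonal. Then $p(G)=Sp(J)S^{-1}$. Since $p(J)$ is again upper triangular with diagonal entries $p(\lambda)$, $\lambda\in\sigma(G)$, the spectrum of $p(G)$ is exactly $p(\sigma(G))$.

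For (iv), I take an eigenpair $Gv=\lambda v$ with $|\lambda|=\rho(G)$. If $\lambda$ is complex, I pass to the rank-one matrix $V=[v,\dots,v]$ and use a submultiplicative norm, or equivalently argue via (ii) below. The real-vector case gives $|\lambda|\,\|v\|=\|Gv\|\le\|G\|\,\|v\|$ directly. To avoid complex vectors in a real induced norm, I would instead use Gelfand's idea: from $\|G^k\|\le\|G\|^k$ and the fact that $\rho(G^k)=\rho(G)^k$ (by (i)), it suffices to show $\rho(H)\le\|H\|$ for some power $H$. Alternatively, I would cite the complexification norm directly.

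For (iii), I construct the norm explicitly. Choose $\varepsilon>0$ with $\rho(G)+\varepsilon<1$. Scaling the Jordan form by $D_\varepsilon=\mathrm{diag}(1,\varepsilon,\varepsilon^2,\dots)$ gives $\|D_\varepsilon^{-1}JD_\varepsilon\|_\infty\le\rho(G)+\varepsilon$. I then define $\|x\|_*=\|D_\varepsilon^{-1}S^{-1}x\|_\infty$. One technicality is that $S$ may be complex. In that case I take the real Jordan form, whose $2\times2$ rotation-scaling blocks have $\infty$-norm at most $\sqrt2|\lambda|$, which is awkward. A cleaner route is the real norm $\|x\|_*=\sum_{k\ge0}\|G^kx\|_2/s^k$ with $\rho(G)<s<1$, which converges by (ii). This gives $\|Gx\|_*\le s\|x\|_*$ and hence $\|G\|_*\le s<1$.

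For (ii), by the argument of (iii) applied to $G/s$, we have $\rho(G/s)<1$. There is therefore a norm $\|\cdot\|_*$ with $\|G/s\|_*<1$, and hence $\|G^k\|_*\le s^k$. Norm equivalence then gives $\|G^k\|\le c_s s^k$.

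The circularity between (ii) and (iii) must be avoided. I would prove (ii) first, directly from the Jordan form: $J^k$ has entries $\binom{k}{j}\lambda^{k-j}$, and $\binom{k}{j}(|\lambda|/s)^{k}$ is bounded in $k$ because polynomial growth is dominated by geometric decay. After that, I derive (iii) via the series norm and (iv) via $\rho(G)^k=\rho(G^k)\le\|G^k\|\le\|G\|^k$ combined with (ii). This ordering is the main organizational obstacle.
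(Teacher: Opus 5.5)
The paper gives no proof of this lemma; it cites it from Berman--Plemmons and Horn--Johnson. Your arguments for (i), (ii) and (iii) are essentially sound. The argument you finally settle on for (iv), however, is circular. In $\rho(G)^k=\rho(G^k)\le\|G^k\|\le\|G\|^k$, the middle inequality is statement (iv) itself, applied to $G^k$.

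Invoking (ii) does not rescue this. Statement (ii) gives only the upper bound $\|G^k\|\le c_s s^k$, i.e.\ $\limsup_k\|G^k\|^{1/k}\le\rho(G)$. What (iv) needs is the reverse: a lower bound $\|G^k\|\ge c\,\rho(G)^k$ with $c>0$ independent of $k$.

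The obstacle you noticed is real and stays unresolved. An eigenvector $v$ with $|\lambda|=\rho(G)$ may be complex, while the given induced norm is defined only on $\mathbb{R}^n$. The rank-one matrix $V=[v,\dots,v]$ has the same defect. By contrast, your worry about a complex $S$ in (iii) was unnecessary. The map $x\mapsto\|D_\varepsilon^{-1}S^{-1}x\|_\infty$, restricted to $\mathbb{R}^n$, is already a norm there, and $\|Gx\|_*\le(\rho(G)+\varepsilon)\|x\|_*$ holds verbatim. Restricting a complex norm to real vectors costs nothing; extending a real norm to complex vectors is the step that needs work.

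The complexification you mention in passing is the right fix, and it is short. Define $\|z\|_{\mathbb{C}}=\max_{\phi}\|\mathrm{Re}(e^{\mathrm{i}\phi}z)\|$. This is a norm on $\mathbb{C}^n$ that agrees with $\|\cdot\|$ on $\mathbb{R}^n$. Because $G$ is real, $\mathrm{Re}(e^{\mathrm{i}\phi}Gz)=G\,\mathrm{Re}(e^{\mathrm{i}\phi}z)$, so $\|Gz\|_{\mathbb{C}}\le\|G\|\,\|z\|_{\mathbb{C}}$. Applying this to $z=v$ gives $\rho(G)\le\|G\|$.

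Alternatively, you can make your Gelfand idea quantitative. Write $v=a+\mathrm{i}b$ with $a,b\in\mathbb{R}^n$. The identity $\|G^ka\|_2^2+\|G^kb\|_2^2=\rho(G)^{2k}(\|a\|_2^2+\|b\|_2^2)$ shows $\|G^k\|_2\ge\rho(G)^k$ for the real induced $2$-norm. Norm equivalence on $\mathbb{R}^{n\times n}$ then gives $\|G\|^k\ge\|G^k\|\ge c\,\rho(G)^k$. Taking $k$-th roots and letting $k\to\infty$ yields (iv).

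For context, the paper applies (iv) only to the nonnegative matrix $\mathcal{K}$. There Perron--Frobenius supplies a real eigenvector for $\rho(\mathcal{K})$, so your direct eigenvector argument suffices in that setting.

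One minor point in (iii): applying (ii) at the same $s$ only bounds the terms of $\sum_k\|G^kx\|_2/s^k$; it does not make the series converge. Apply (ii) with some $s'\in(\rho(G),s)$ instead. Or use $\sup_k\|G^kx\|_2/s^k$, which gives $\|Gx\|_*\le s\|x\|_*$ equally well.
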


\begin{lemma}[{\cite{BermanPlemmons1994}}]
	\label{lem:nonnegative-matrix}
	Let $G\in\mathbb{R}^{n\times n}$ satisfy $G\geq O_n$ and
	$\rho(G)<1$. Then
$
		G^k\longrightarrow O_n,
	~
		(I_n-G)^{-1}
		=
		\sum_{k=0}^{\infty}G^k
		\geq O_n.
$
\end{lemma}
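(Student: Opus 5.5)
The statement to prove is Lemma~\ref{lem:nonnegative-matrix}: for $G\geq O_n$ with $\rho(G)<1$, the powers $G^k$ tend to $O_n$ and $(I_n-G)^{-1}=\sum_{k\ge0}G^k\geq O_n$. I would argue through the spectral tools of Lemma~\ref{lem:spectral-properties} and use non-negativity only at the very end.

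\textbf{Step 1: convergence of the powers.} Fix $s$ with $\rho(G)<s<1$ and any induced norm. Part (ii) of Lemma~\ref{lem:spectral-properties} gives a constant $c_s>0$ with $\|G^k\|\leq c_s s^k$ for all $k$. Hence $\|G^k\|\to0$, and since all norms on $\mathbb{R}^{n\times n}$ are equivalent, $G^k\to O_n$ entrywise.

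\textbf{Step 2: convergence of the Neumann series.} The same bound gives $\sum_k\|G^k\|\leq c_s\sum_k s^k=c_s/(1-s)<\infty$. The partial sums $S_m=\sum_{k=0}^{m}G^k$ are therefore Cauchy and converge to some matrix $S$.

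\textbf{Step 3: identifying $S$ as the inverse.} Telescoping gives
\[
(I_n-G)S_m=S_m(I_n-G)=I_n-G^{m+1}.
\]
Letting $m\to\infty$ and using Step~1 yields $(I_n-G)S=S(I_n-G)=I_n$. This shows both that $I_n-G$ is non-singular and that $(I_n-G)^{-1}=S$. Non-singularity can also be seen directly: by part (i) of Lemma~\ref{lem:spectral-properties} with $p(t)=1-t$, the eigenvalues of $I_n-G$ are $1-\lambda$ with $|\lambda|<1$, so none is zero.

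\textbf{Step 4: non-negativity.} Each $G^k$ is a product of non-negative matrices and hence $G^k\geq O_n$. So every $S_m\geq O_n$, and since the entrywise limit of non-negative matrices is non-negative, $S\geq O_n$.

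The only step that needs care is Step~1. It relies on part (ii), which is the Jordan-form or Gelfand estimate and is assumed. Everything after that is routine, and non-negativity of $G$ enters only in Step~4.
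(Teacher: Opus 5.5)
Your proof is correct. The paper does not prove this lemma; it cites \cite{BermanPlemmons1994}, and your argument is the standard one used there: the bound from Lemma~\ref{lem:spectral-properties}(ii) gives $G^k\to O_n$ and absolute convergence of the series, the telescoping identity $(I_n-G)S_m=I_n-G^{m+1}$ identifies the limit as $(I_n-G)^{-1}$, and non-negativity follows because each $G^k\geq O_n$ and the non-negative cone is closed.
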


\begin{lemma}[{\cite{BermanPlemmons1994,Varga2000}}]
	\label{lem:M-matrix-properties}
	The following statements hold.
	\begin{enumerate}
		\renewcommand{\labelenumi}{(\roman{enumi})}
		
		\item If \(G\) is an \(H\)-matrix, then \(G\) is non-singular and
		$
		|G^{-1}|\leq\langle G\rangle^{-1}.
		$
		
		\item Let \(M\) be a \(Z\)-matrix and let \(N\geq O_n\). If \(M-N\) is
		a non-singular \(M\)-matrix, then \(M\) is a non-singular
		\(M\)-matrix and
		$
		\rho(M^{-1}N)<1.
		$
		
		\item A strictly row diagonally dominant \(Z\)-matrix with positive
		diagonal entries is a non-singular \(M\)-matrix.
		
	\end{enumerate}
\end{lemma}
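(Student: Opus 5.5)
The statement immediately above is Lemma~\ref{lem:M-matrix-properties}, a collection of three standard facts from \cite{BermanPlemmons1994,Varga2000}. I would prove the parts in the order (iii), (i), (ii), since (iii) is the simplest and the others reuse the same ideas.

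For (iii), let $M$ be a strictly row diagonally dominant $Z$-matrix with $m_{ii}>0$. Write $M=D-C$, where $D=\mathrm{diag}(m_{ii})>O_n$ and $C\geq O_n$ collects the negated off-diagonal entries. Strict row dominance says precisely that $\|D^{-1}C\|_\infty<1$, so Lemma~\ref{lem:spectral-properties}(iv) gives $\rho(D^{-1}C)<1$. Lemma~\ref{lem:nonnegative-matrix} then shows that $(I_n-D^{-1}C)^{-1}=\sum_k (D^{-1}C)^k\geq O_n$. Consequently $M^{-1}=(I_n-D^{-1}C)^{-1}D^{-1}\geq O_n$ exists, so $M$ is a non-singular $M$-matrix.

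For (i), let $G$ be an $H$-matrix, so $\langle G\rangle$ is a non-singular $M$-matrix. First I would show $\langle G\rangle$ has positive diagonal. Write $\langle G\rangle=D-C$ with $D=\mathrm{diag}(|g_{ii}|)$ and $C\geq O_n$. From $\langle G\rangle^{-1}\langle G\rangle=I_n$ and $\langle G\rangle^{-1}\geq O_n$ we get
\[
\langle G\rangle^{-1}D=I_n+\langle G\rangle^{-1}C\geq I_n,
\]
so no column of $D$ vanishes, and hence $D>0$. Next I need $\rho(D^{-1}C)<1$. Set $u=\langle G\rangle^{-1}\boldsymbol e>0$; then $(D-C)u=\boldsymbol e>0$, i.e.\ $D^{-1}Cu<u$. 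In the weighted norm $\|x\|_u=\max_i|x_i|/u_i$ this means $\|D^{-1}C\|_u<1$, and Lemma~\ref{lem:spectral-properties}(iv) gives the spectral bound.

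Now split $G=D_G-E$ with $D_G=\mathrm{diag}(g_{ii})$, so $|D_G^{-1}E|=D^{-1}C$. Then $\rho(D_G^{-1}E)\leq\rho(D^{-1}C)<1$; this follows from $|X^k|\leq|X|^k$ together with Lemma~\ref{lem:spectral-properties}(ii), or via a Perron-type comparison with Lemma~\ref{lem:perron-frobenius}. Hence $G=D_G(I_n-D_G^{-1}E)$ is non-singular and $G^{-1}=\sum_k(D_G^{-1}E)^kD_G^{-1}$. Taking absolute values termwise gives
\[
|G^{-1}|\leq\sum_k(D^{-1}C)^kD^{-1}=\langle G\rangle^{-1}.
\]

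For (ii), assume $P=M-N$ is a non-singular $M$-matrix, with $M$ a $Z$-matrix and $N\geq O_n$. Take $u=P^{-1}\boldsymbol e>0$. Since $Mu=\boldsymbol e+Nu>0$ and $M$ is a $Z$-matrix, scaling gives $MU$ (with $U=\mathrm{diag}(u)$) as a Z-matrix with positive row sums. Its diagonal is positive, because each row sum is positive and the off-diagonal entries are $\leq0$. So $MU$ is strictly row diagonally dominant, and by (iii) $MU$, hence $M$, is a non-singular $M$-matrix.

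Then $T=M^{-1}N\geq O_n$ satisfies $Tu=M^{-1}Nu=u-M^{-1}\boldsymbol e$. Because $M^{-1}\geq O_n$ is non-singular, no row of it vanishes, so $M^{-1}\boldsymbol e>0$ and therefore $Tu<u$. The weighted norm argument then gives $\rho(T)<1$. I expect the main obstacle to be the positivity and no-zero-row bookkeeping (the positive diagonal in (i) and $M^{-1}\boldsymbol e>0$ in (ii)) rather than the spectral estimates themselves.
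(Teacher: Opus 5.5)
Your proof is correct. It differs from the paper mainly in being a proof at all: the paper gives no argument for this lemma and quotes it from \cite{BermanPlemmons1994,Varga2000}. In those references the three facts come from separate pieces of theory. Part (iii) and the first half of (ii) follow from the list of equivalent characterizations of non-singular $M$-matrices (existence of $u>0$ with $Pu>0$; a $Z$-matrix dominating a non-singular $M$-matrix is again one, and here $M\geq M-N$). Part (i) is Ostrowski's comparison theorem. The bound $\rho(M^{-1}N)<1$ is Varga's regular-splitting theorem. You replace all of this with one device: a positive vector $u=P^{-1}\boldsymbol e$ and the weighted norm $\|x\|_u=\max_i|x_i|/u_i$, combined only with Lemmas~\ref{lem:spectral-properties} and~\ref{lem:nonnegative-matrix}. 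That makes the preliminaries self-contained and treats the three parts uniformly. The textbook route is shorter to cite and gives sharper information, e.g.\ $\rho(M^{-1}N)=\rho(P^{-1}N)/\bigl(1+\rho(P^{-1}N)\bigr)$ for regular splittings, together with the converse statements.

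A few small points in (i) deserve a word.

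\begin{itemize}
\item Lemma~\ref{lem:nonnegative-matrix} is stated only for non-negative matrices, so the Neumann series for the signed matrix $X=D_G^{-1}E$ needs separate justification. For instance, $|X^k|\leq|X|^k=(D^{-1}C)^k$ and the latter series converges.
\item Your alternative ``Perron-type comparison with Lemma~\ref{lem:perron-frobenius}'' does not apply as that lemma is stated, since it requires $O_n\leq G\leq H$ and $X$ is not non-negative. Your primary route via $|X^k|\leq|X|^k$ and Lemma~\ref{lem:spectral-properties}(ii) is fine.
\item Both issues can be bypassed at once. The induced $u$-norm depends only on $|X|$, so $\|D_G^{-1}E\|_u=\|D^{-1}C\|_u<1$ directly. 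This gives $\rho(X)<1$ and geometric convergence of the series.
\item The strict positivity of $u$ in (i), and at the start of (ii), uses the same ``non-singular non-negative matrix has no zero row'' argument that you state only later.
\end{itemize}
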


\begin{lemma}\label{lem:max-nonexpansive}
	For any $x,y\in\mathbb{R}^n$,
	$
		\left|
		\max\{0,x\}-\max\{0,y\}
		\right|
		\leq
		|x-y|.
	$
\end{lemma}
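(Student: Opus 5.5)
Since both the inequality and the maximum operator act component-wise, the statement of Lemma~\ref{lem:max-nonexpansive} reduces to the scalar claim
\[
\bigl|\max\{0,s\}-\max\{0,t\}\bigr|\le |s-t|\qquad\text{for all } s,t\in\mathbb{R}.
\]
We apply it with $s=x_i$ and $t=y_i$ for each $i=1,\dots,n$. So the plan is to prove this one-dimensional inequality and then read it off coordinate by coordinate.

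For the scalar claim we use the representation $\max\{0,s\}=\tfrac12(s+|s|)$, which is the same identity as $|x|=2\max\{0,x\}-x$ recalled in the introduction. It gives
\[
\max\{0,s\}-\max\{0,t\}=\tfrac12\bigl((s-t)+(|s|-|t|)\bigr).
\]
Next we apply the reverse triangle inequality $\bigl||s|-|t|\bigr|\le |s-t|$ and then the ordinary triangle inequality. Together these bound the absolute value of the right-hand side by $\tfrac12\bigl(|s-t|+|s-t|\bigr)=|s-t|$.

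As an independent check, one can instead argue by cases on the signs of $s$ and $t$.
\begin{itemize}
\item If both are nonnegative, the difference is exactly $s-t$.
\item If both are nonpositive, the difference is $0$.
\item If $s>0\ge t$, the difference is $s$, and $0<s\le s-t=|s-t|$.
\item The case $t>0\ge s$ follows from the previous one by symmetry.
\end{itemize}

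There is no genuine obstacle here. The only point requiring care is to state explicitly that the vector inequality is understood component-wise, so that the scalar bound applied to each coordinate yields the claim exactly.
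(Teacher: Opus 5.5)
Your proof is correct, and your ``independent check'' is exactly the paper's argument. The paper proves the scalar inequality by the same sign-case split (both nonnegative, both nonpositive, $s\ge 0\ge t$, and the symmetric case) and then applies it componentwise.

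Your primary route is different. Writing $\max\{0,s\}=\tfrac12(s+|s|)$ and using the reverse triangle inequality $\bigl||s|-|t|\bigr|\le|s-t|$ removes the case split entirely. It also makes clear that $\max\{0,\cdot\}$ is $1$-Lipschitz simply because it is the average of two $1$-Lipschitz functions. A further benefit is that it ties the lemma to the identity $2\max\{0,x\}=x+|x|$, which the paper relies on in Lemma~\ref{lem:max-reformulation} and Corollary~\ref{cor:residual-error-bound}.

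The paper's case analysis needs no auxiliary inequality. It also shows directly that $\max\{0,s\}-\max\{0,t\}$ always has the same sign as $s-t$, so it equals $\lambda(s-t)$ for some $\lambda\in[0,1]$. This is the structure behind the relation \eqref{eq:local-max-relation} in the local analysis. Either argument fully suffices for the way the lemma is used in \eqref{eq:T-majorization}.
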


\begin{proof}
	Let \(s,t\in\mathbb{R}\). If \(s,t\geq0\), then
	\(|\max\{0,s\}-\max\{0,t\}|=|s-t|\), whereas if \(s,t\leq0\), the
	left-hand side is zero. If \(s\geq0\geq t\), then
	\(|\max\{0,s\}-\max\{0,t\}|=s\leq s-t=|s-t|\); the remaining case is
	symmetric. Thus,
	\(|\max\{0,s\}-\max\{0,t\}|\leq|s-t|\) for all \(s,t\in\mathbb{R}\).
	Applying this inequality component-wise to \(x\) and \(y\) proves the
	result.
\end{proof}

\begin{lemma}[{\cite{WuHanLi2024}}]
	\label{lem:max-reformulation}
	For every $x\in\mathbb{R}^n$, GAVE~\eqref{eq:GAVE} is equivalent to
	$
		(A+B)x-2B\max\{0,x\}=b.
	$
\end{lemma}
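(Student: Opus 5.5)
The plan is to prove the identity $|x| = 2\max\{0,x\} - x$ component-wise, and then substitute it into GAVE~\eqref{eq:GAVE}. The substitution is pure algebra. Since every step is an identity, the two equations have exactly the same solution set. This is the sense in which "equivalent for every $x\in\mathbb{R}^n$" should be read: a vector $x$ satisfies one equation if and only if it satisfies the other.

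First, fix a scalar $s\in\mathbb{R}$ and verify $|s| = 2\max\{0,s\}-s$ by two cases.
\begin{itemize}
\item If $s\geq 0$, then $\max\{0,s\}=s$, so the right-hand side is $2s-s=s=|s|$.
\item If $s<0$, then $\max\{0,s\}=0$, so the right-hand side is $-s=|s|$.
\end{itemize}
Because both $|\cdot|$ and $\max\{0,\cdot\}$ act component-wise (the convention fixed in Section~\ref{sec2}), applying this to each entry gives the vector identity $|x| = 2\max\{0,x\} - x$ for all $x\in\mathbb{R}^n$.

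Next, substitute this into the left-hand side of \eqref{eq:GAVE}:
\[
Ax - B|x| = Ax - B\bigl(2\max\{0,x\}-x\bigr) = (A+B)x - 2B\max\{0,x\}.
\]
This uses only linearity of matrix-vector multiplication. Hence, for each $x$, the equality $Ax-B|x|=b$ holds exactly when $(A+B)x-2B\max\{0,x\}=b$ holds, because both left-hand sides are the same vector.

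There is no genuine obstacle; the only point needing care is the scalar case split. Note also that non-singularity of $A+B$ is not needed for this equivalence. It only matters later, when the reformulation is turned into the fixed-point operator $x\mapsto (A+B)^{-1}\bigl(2B\max\{0,x\}+b\bigr)$.
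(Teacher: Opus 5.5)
Your proof is correct and takes the same approach as the paper. The paper cites this lemma from Wu et al.\ without a separate proof and relies on exactly the identity $|x|=2\max\{0,x\}-x$, which is stated in the introduction and reused as $2\max\{0,x\}=x+|x|$ in the proof of Corollary~\ref{cor:residual-error-bound}, substituted into $Ax-B|x|=b$; your remark that non-singularity of $A+B$ is not needed for the equivalence is also accurate.
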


\section{A relaxed maximum-based normal
	\texorpdfstring{$S$}{S}-iteration method for GAVE}\label{sec3}

In this section, we develop a relaxed maximum-based normal
$S$-iteration method for solving GAVE~\eqref{eq:GAVE}. As a starting
point, we briefly recall the maximum-based iteration method of
Wu et al.~\cite{WuHanLi2024}.

By Lemma~\ref{lem:max-reformulation}, GAVE~\eqref{eq:GAVE} can be
expressed as
\begin{equation*}
	(A+B)x-2B\max\{0,x\}=b.
\end{equation*}
Introducing an auxiliary matrix $\Omega\in\mathbb{R}^{n\times n}$ gives
the equivalent relation
\begin{equation*}
	(A+B+\Omega)x
	=
	\Omega x+2B\max\{0,x\}+b.
\end{equation*}
This formulation leads to the following method.

\begin{method}[Maximum-based iteration method
	\cite{WuHanLi2024}]\label{method:MBI}
	Let $x^{(0)}\in\mathbb{R}^n$ be an initial vector, and choose a positive semi-definite matrix $\Omega\in\mathbb{R}^{n\times n}$ such that
	$A+B+\Omega$ is non-singular. For $k=0,1,2,\ldots$, compute
	\begin{equation*}
		x^{(k+1)}
		=
		(A+B+\Omega)^{-1}
		\left(
		\Omega x^{(k)}
		+2B\max\{0,x^{(k)}\}+b
		\right)
	\end{equation*}
	until a prescribed stopping criterion is satisfied.
\end{method}

Method~\ref{method:MBI} provides a general maximum-based framework but
requires the selection of $\Omega$. To remove this
additional choice, we set $\Omega=O_n$ and incorporate the resulting fixed-point operator into a relaxed normal
$S$-iteration.

Assume that $A+B$ is non-singular. Taking $\Omega=O_n$ in
Method~\ref{method:MBI} gives the maximum-based operator
\begin{equation}\label{eq:max-map}
	T(x)
	=
	(A+B)^{-1}
	\left(
	2B\max\{0,x\}+b
	\right),
	\qquad x\in\mathbb{R}^n.
\end{equation}
It follows from Lemma~\ref{lem:max-reformulation} that
\begin{equation*}
	x^*\ \text{solves GAVE~\eqref{eq:GAVE}}
	\quad\Longleftrightarrow\quad
	T(x^*)=x^*.
\end{equation*}

In the present setting, $\mathcal{C}=\mathbb{R}^n$ is convex and
$T:\mathbb{R}^n\rightarrow\mathbb{R}^n$. Motivated by the normal
$S$-iteration in Definition~\ref{def:normal-S}, we replace the sequence
$\{\lambda_k\}\subset(0,1)$ by a constant relaxation parameter
$\theta\geq0$, subject to the convergence conditions established in
Section~\ref{sec4}. Define
\begin{equation}\label{eq:relaxed-S-map}
	S_\theta(x)
	=
	T\left((1-\theta)x+\theta T(x)\right).
\end{equation}
The proposed iteration can then be written as
$x^{(k+1)}=S_\theta(x^{(k)})$.

\begin{method}[Relaxed maximum-based normal
	$S$-iteration method]\label{method:RMNSI}
	Let $x^{(0)}\in\mathbb{R}^n$ be an initial vector, let $A+B$ be
	non-singular, and choose $\theta\geq0$. For $k=0,1,2,\ldots$, compute
\begin{equation}\label{eq:RMNSI}
	\left\{
	\begin{aligned}
		u^{(k)}
		&=
		T(x^{(k)})
		=
		(A+B)^{-1}
		\left(2B\max\{0,x^{(k)}\}+b\right),\\
		y^{(k)}
		&=
		(1-\theta)x^{(k)}+\theta u^{(k)},\\
		x^{(k+1)}
		&=
		T(y^{(k)})
		=
		(A+B)^{-1}
		\left(2B\max\{0,y^{(k)}\}+b\right).
	\end{aligned}
	\right.
\end{equation}
	Continue until the prescribed stopping criterion is satisfied.
\end{method}

Method~\ref{method:RMNSI} is called the relaxed maximum-based normal
\texorpdfstring{$S$}{S}-iteration method (RMNSI). If $x^*$ solves
GAVE~\eqref{eq:GAVE}, then $T(x^*)=x^*$ and
\begin{equation*}
	S_\theta(x^*)
	=
	T\left((1-\theta)x^*+\theta T(x^*)\right)
	=
	T(x^*)
	=
	x^*.
\end{equation*}
Thus, every solution of the GAVE is preserved by the proposed
iteration.

\begin{remark}\label{rem:theta-cases}
	The parameter $\theta$ determines the contribution of the first
	maximum-based evaluation. In particular,
	\begin{itemize}
		\item for $\theta=0$, RMNSI reduces to the maximum-based iteration
		with $\Omega=O_n$;
		\item for $0<\theta<1$, it becomes a constant-parameter normal
		$S$-iteration;
		\item for $\theta=1$, it performs two successive applications of
		the operator $T$;
		\item for $\theta>1$, it gives an over-relaxed normal $S$-iteration,
		provided that the corresponding convergence conditions hold.
	\end{itemize}
\end{remark}

In an implementation, the inverse of $A+B$ is not formed explicitly.
Instead, the two linear systems
\begin{equation*}
	\begin{aligned}
		(A+B)u^{(k)}
		&=2B\max\{0,x^{(k)}\}+b,\\
		(A+B)x^{(k+1)}
		&=2B\max\{0,y^{(k)}\}+b
	\end{aligned}
\end{equation*}
are solved. Since their coefficient matrix remains fixed, a single
factorization of $A+B$ can be reused throughout the iteration, thereby
avoiding repeated factorizations.

\section{Convergence analysis}\label{sec4}

This section studies the convergence properties of RMNSI. We first
establish a spectral condition for the unique solvability of
GAVE~\eqref{eq:GAVE} and the global convergence of the method from an
arbitrary initial vector. We then determine the admissible range of the
relaxation parameter, examine the convergence rate, and derive
additional sufficient conditions. Error estimates based on the GAVE
residual and successive iterates are also presented, followed by a
local convergence analysis near the solution.

\begin{theorem}\label{thm:global-convergence}
	Let $\theta\geq0$ and suppose that $C=A+B$ is a non-singular matrix. Define
	\begin{equation}\label{eq:global-quantities}
		\mathcal{K}
		=
		2\left|C^{-1}B\right|,
		\qquad
		\eta=\rho(\mathcal{K}),
	\end{equation}
	and
	\begin{equation}\label{eq:H-chi}
		\mathcal{H}_\theta
		=
		|1-\theta|\mathcal{K}
		+\theta\mathcal{K}^2,
		\qquad
		\chi_\theta
		=
		\rho(\mathcal{H}_\theta).
	\end{equation}
	Then
	\begin{equation}\label{eq:chi-expression}
		\chi_\theta
		=
		\eta\left(|1-\theta|+\theta\eta\right).
	\end{equation}
	If
	\begin{equation}\label{eq:main-convergence-condition}
		\chi_\theta<1,
	\end{equation}
	then GAVE~\eqref{eq:GAVE} has a unique solution $x^*$ and, for every
	$x^{(0)}\in\mathbb{R}^n$, the sequence generated by
	Method~\ref{method:RMNSI} converges to $x^*$.
\end{theorem}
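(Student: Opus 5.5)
First I would establish the identity \eqref{eq:chi-expression}. Since $\mathcal{K}\geq O_n$, the matrix $\mathcal{H}_\theta=p(\mathcal{K})$ with $p(t)=|1-\theta|t+\theta t^2$ is non-negative. By Lemma~\ref{lem:spectral-properties}(i), $\sigma(\mathcal{H}_\theta)=p(\sigma(\mathcal{K}))$. For any $\lambda\in\sigma(\mathcal{K})$ we have $|p(\lambda)|\leq|1-\theta||\lambda|+\theta|\lambda|^2\leq p(\eta)$, because $p$ has non-negative coefficients and is increasing on $[0,\infty)$. This gives $\chi_\theta\leq p(\eta)$. By Lemma~\ref{lem:perron-frobenius}, $\eta\in\sigma(\mathcal{K})$, so $p(\eta)\in\sigma(\mathcal{H}_\theta)$ and hence $\chi_\theta\geq p(\eta)$. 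Therefore $\chi_\theta=\eta(|1-\theta|+\theta\eta)$.

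Next comes the componentwise error recursion. Let $x,z\in\mathbb{R}^n$. By \eqref{eq:max-map} and Lemma~\ref{lem:max-nonexpansive}, $|T(x)-T(z)|=|2C^{-1}B(\max\{0,x\}-\max\{0,z\})|\leq\mathcal{K}|x-z|$. Now take $y=(1-\theta)x+\theta T(x)$ and $w=(1-\theta)z+\theta T(z)$. Then $|y-w|\leq|1-\theta||x-z|+\theta\mathcal{K}|x-z|$. Applying the bound for $T$ once more and using $\mathcal{K}\geq O_n$ yields
\begin{equation*}
|S_\theta(x)-S_\theta(z)|\leq\mathcal{K}|y-w|\leq\mathcal{H}_\theta|x-z|.
\end{equation*}

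Uniqueness is immediate. If $x^*$ and $z^*$ both solve GAVE~\eqref{eq:GAVE}, they are fixed points of $S_\theta$, so $d=|x^*-z^*|$ satisfies $d\leq\mathcal{H}_\theta d\leq\cdots\leq\mathcal{H}_\theta^k d\to 0$ by Lemma~\ref{lem:nonnegative-matrix}. Hence $d=0$.

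Existence is the main obstacle: $\chi_\theta<1$ does not obviously give $\eta<1$ when $\theta>0$. My first attempt is to show existence by proving that the iterates form a Cauchy sequence.
\begin{itemize}
\item Set $d_k=|x^{(k+1)}-x^{(k)}|$. The recursion gives $d_k\leq\mathcal{H}_\theta^k d_0$.
\item Then $|x^{(k+m)}-x^{(k)}|\leq\sum_{j\geq k}\mathcal{H}_\theta^j d_0=\mathcal{H}_\theta^k(I_n-\mathcal{H}_\theta)^{-1}d_0\to0$, by Lemma~\ref{lem:nonnegative-matrix}.
\item So $\{x^{(k)}\}$ is Cauchy and converges to some $\bar x$.
\item Since $S_\theta$ is continuous ($T$ is continuous), $\bar x=S_\theta(\bar x)$.
\end{itemize}

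It remains to show that a fixed point of $S_\theta$ is a fixed point of $T$. Put $u=T(\bar x)$. Then $S_\theta(u)=T((1-\theta)u+\theta T(u))$. A cleaner route is to use $T\circ S_\theta=S_\theta\circ T$ when $\theta\in\{0,1\}$, but in general this fails. Instead, I will show $\eta<1$ directly, after which $T$ itself is a contraction in a suitable norm (Lemma~\ref{lem:spectral-properties}(iii)).

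Suppose $\eta\geq1$. If $\theta\leq1$, then $\chi_\theta\geq\eta(1-\theta+\theta)=\eta\geq1$. If $\theta>1$, then $\chi_\theta=\eta(\theta-1+\theta\eta)\geq2\theta-1>1$. Either case contradicts \eqref{eq:main-convergence-condition}, so $\eta<1$.

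With $\eta<1$, Lemma~\ref{lem:nonnegative-matrix} gives existence of a fixed point of $T$ directly: Picard iterates of $T$ satisfy $|T^{k+1}x-T^kx|\leq\mathcal{K}^k|Tx-x|$, which is summable. That fixed point $x^*$ solves GAVE~\eqref{eq:GAVE} by Lemma~\ref{lem:max-reformulation}, and it is unique by the argument above.

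Finally, global convergence follows because $x^*$ is also a fixed point of $S_\theta$. Hence $|x^{(k)}-x^*|\leq\mathcal{H}_\theta^k|x^{(0)}-x^*|\to0$ for every $x^{(0)}\in\mathbb{R}^n$. With this route the Cauchy argument is not needed, and the key step is the elementary inequality showing that $\chi_\theta<1$ forces $\eta<1$.
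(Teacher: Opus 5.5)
Your proof is correct and, once the abandoned Cauchy-sequence detour is set aside, it follows the paper's route. The steps match: spectral mapping plus Perron--Frobenius to get $\chi_\theta=\eta(|1-\theta|+\theta\eta)$, the observation that $\chi_\theta<1$ forces $\eta<1$, Picard iteration of $T$ with the bound $|T(x)-T(y)|\leq\mathcal{K}|x-y|$ for existence, and $|x^{(k)}-x^*|\leq\mathcal{H}_\theta^k|x^{(0)}-x^*|$ for convergence; the only minor difference is that you derive uniqueness from $\mathcal{H}_\theta$ rather than from $\mathcal{K}$, which works equally well.
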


\begin{proof}
	Since $\mathcal{K}\geq O_n$,
	Lemma~\ref{lem:perron-frobenius} ensures that
	$\eta=\rho(\mathcal{K})$ is an eigenvalue of $\mathcal{K}$. Consider
	the polynomial
	\begin{equation*}
		p(t)=|1-\theta|t+\theta t^2.
	\end{equation*}
	Since $\mathcal{H}_\theta=p(\mathcal{K})$,
	Lemma~\ref{lem:spectral-properties}(i) shows that $p(\eta)$ is an
	eigenvalue of $\mathcal{H}_\theta$. Moreover, for every
	$\lambda\in\sigma(\mathcal{K})$,
	\begin{equation*}
		|p(\lambda)|
		\leq
		|1-\theta||\lambda|+\theta|\lambda|^2
		\leq
		|1-\theta|\eta+\theta\eta^2
		=
		p(\eta).
	\end{equation*}
	Therefore,
$
		\rho(\mathcal{H}_\theta)
		=
		p(\eta)
		=
		\eta\left(|1-\theta|+\theta\eta\right),
$
	which proves \eqref{eq:chi-expression}.
	
	We next show that \eqref{eq:main-convergence-condition} implies
	$\eta<1$. If $\eta\geq1$, then
	\begin{equation*}
		|1-\theta|+\theta\eta
		\geq
		|1-\theta|+\theta
		\geq1.
	\end{equation*}
	It follows that $\chi_\theta\geq\eta\geq1$, contradicting
	\eqref{eq:main-convergence-condition}.
	
	Let $T$ be the maximum-based operator defined in
	\eqref{eq:max-map}. Lemma~\ref{lem:max-nonexpansive} gives
	\begin{equation}\label{eq:T-majorization}
		|T(x)-T(y)|
		\leq
		\mathcal{K}|x-y|,
		\qquad x,y\in\mathbb{R}^n.
	\end{equation}
	Starting from an arbitrary $z^{(0)}\in\mathbb{R}^n$, define
	\begin{equation*}
		z^{(k+1)}=T(z^{(k)}),
		\qquad k=0,1,2,\ldots.
	\end{equation*}
	Repeated application of \eqref{eq:T-majorization} gives
	\begin{equation*}
		|z^{(k+1)}-z^{(k)}|
		\leq
		\mathcal{K}^k|z^{(1)}-z^{(0)}|.
	\end{equation*}
	Hence, for every positive integer $\ell$,
	\begin{align*}
		|z^{(k+\ell)}-z^{(k)}|
		&\leq
		\sum_{j=0}^{\ell-1}
		|z^{(k+j+1)}-z^{(k+j)}|
		\leq
		\sum_{j=0}^{\ell-1}
		\mathcal{K}^{k+j}|z^{(1)}-z^{(0)}|\\
		&\leq
		\mathcal{K}^k
		(I_n-\mathcal{K})^{-1}
		|z^{(1)}-z^{(0)}|.
	\end{align*}
	By Lemma~\ref{lem:nonnegative-matrix},
$
		(I_n-\mathcal{K})^{-1}
		=
		\sum_{j=0}^{\infty}\mathcal{K}^j
		\geq O_n,
		~
		\mathcal{K}^k\longrightarrow O_n.
$

	The preceding bound is independent of $\ell$. Taking the infinity norm
	and letting $k\to\infty$ show that $\{z^{(k)}\}$ is a Cauchy sequence and converges to some $x^*\in\mathbb{R}^n$. Since $T$ is
	continuous, passing to the limit in
	$z^{(k+1)}=T(z^{(k)})$ gives
$
		T(x^*)=x^*.
$

	By Lemma~\ref{lem:max-reformulation}, $x^*$ is a solution of
	GAVE~\eqref{eq:GAVE}.
	
	To establish uniqueness, let $\widehat{x}^*$ be another solution. Then
	$T(\widehat{x}^*)=\widehat{x}^*$, and
	\begin{equation*}
		|x^*-\widehat{x}^*|
		\leq
		\mathcal{K}|x^*-\widehat{x}^*|.
	\end{equation*}
	Iterating this inequality gives
	\begin{equation*}
		|x^*-\widehat{x}^*|
		\leq
		\mathcal{K}^{s}|x^*-\widehat{x}^*|,
		\qquad s=1,2,\ldots.
	\end{equation*}
	Letting $s\to\infty$ and using
	Lemma~\ref{lem:nonnegative-matrix} yields
	$x^*=\widehat{x}^*$. Hence, the solution is unique.
	
	It remains to prove the convergence of RMNSI. Since $T(x^*)=x^*$,
	the first relation in \eqref{eq:RMNSI}, together with
	\eqref{eq:T-majorization}, gives
	\begin{equation}\label{eq:u-error}
		|u^{(k)}-x^*|
		=
		|T(x^{(k)})-T(x^*)|
		\leq
		\mathcal{K}|x^{(k)}-x^*|.
	\end{equation}
	Using the second relation in \eqref{eq:RMNSI} and
	\eqref{eq:u-error}, we obtain
	\begin{equation}\label{eq:y-error}
		\begin{aligned}
			|y^{(k)}-x^*|
			&=
			\left|
			(1-\theta)(x^{(k)}-x^*)
			+\theta(u^{(k)}-x^*)
			\right|\\
			&\leq
			\left(
			|1-\theta|I_n+\theta\mathcal{K}
			\right)|x^{(k)}-x^*|.
		\end{aligned}
	\end{equation}
	Finally, the third relation in \eqref{eq:RMNSI}, together with
	\eqref{eq:T-majorization} and \eqref{eq:y-error}, yields
	\begin{equation}\label{eq:RMNSI-error}
		|x^{(k+1)}-x^*|
		=
		|T(y^{(k)})-T(x^*)|
		\leq
		\mathcal{K}|y^{(k)}-x^*|
		\leq
		\mathcal{H}_\theta|x^{(k)}-x^*|.
	\end{equation}
	Repeated application of \eqref{eq:RMNSI-error} gives
	\begin{equation}\label{eq:global-error-bound}
		|x^{(k)}-x^*|
		\leq
		\mathcal{H}_\theta^k|x^{(0)}-x^*|.
	\end{equation}
	Since $\mathcal{H}_\theta\geq O_n$ and
	$\rho(\mathcal{H}_\theta)=\chi_\theta<1$,
	Lemma~\ref{lem:nonnegative-matrix} implies that
	$\mathcal{H}_\theta^k\to O_n$. Therefore,
	$x^{(k)}\to x^*$ for every $x^{(0)}\in\mathbb{R}^n$.
\end{proof}

The following corollary gives the range of $\theta$ for which the convergence condition in Theorem~\ref{thm:global-convergence} holds.
	
\begin{corollary}\label{cor:theta-range}
	Under the hypotheses and notation of
	Theorem~\ref{thm:global-convergence}, the condition
	$\chi_\theta<1$ is characterized as follows:
	\begin{enumerate}
		\item if $\eta=0$, every $\theta\geq0$ is admissible;
		\item if $0<\eta<1$, the condition holds if and only if
		\begin{equation}\label{eq:theta-range}
			0\leq\theta<\frac{1}{\eta};
		\end{equation}
		\item if $\eta\geq1$, no $\theta\geq0$ is admissible.
	\end{enumerate}
\end{corollary}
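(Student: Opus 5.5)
The plan is to use the closed form \eqref{eq:chi-expression} from Theorem~\ref{thm:global-convergence}, $\chi_\theta=\eta(|1-\theta|+\theta\eta)$. The question then reduces to a scalar inequality in $\theta\ge0$ with $\eta\ge0$ fixed. I will split according to the value of $\eta$ and, for $0<\eta<1$, further according to whether $\theta\le1$ or $\theta>1$, so as to remove the absolute value.

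For case (1), if $\eta=0$ then $\chi_\theta=0<1$ for every $\theta\ge0$, and there is nothing more to show. For case (3), the argument already appears in the proof of Theorem~\ref{thm:global-convergence}. If $\eta\ge1$, then $|1-\theta|+\theta\eta\ge|1-\theta|+\theta\ge1$ by the triangle inequality. Hence $\chi_\theta\ge\eta\ge1$, and no $\theta$ is admissible.

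Case (2), $0<\eta<1$, is the only part requiring care; the task is an exact ``if and only if''.

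\textbf{Subcase $0\le\theta\le1$.} Here $\chi_\theta=\eta(1-\theta+\theta\eta)=\eta\bigl(1-\theta(1-\eta)\bigr)\le\eta<1$. So every such $\theta$ is admissible. This is consistent with \eqref{eq:theta-range}, because $1<1/\eta$.

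\textbf{Subcase $\theta>1$.} Here $\chi_\theta=\eta(\theta-1+\theta\eta)=\eta\bigl(\theta(1+\eta)-1\bigr)$. The condition $\chi_\theta<1$ is therefore equivalent to $\theta(1+\eta)<1+1/\eta=(1+\eta)/\eta$, that is, $\theta<1/\eta$.

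Combining the two subcases, the admissible set is exactly $[0,1]\cup(1,1/\eta)=[0,1/\eta)$. This proves \eqref{eq:theta-range}.

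The main point to watch is the factorization $1+1/\eta=(1+\eta)/\eta$ in the over-relaxed subcase. It is what makes the threshold come out exactly as $1/\eta$, and it gives the converse direction: $\theta\ge1/\eta$ implies $\chi_\theta\ge1$. Beyond this the argument is routine. The only other check is that $1/\eta>1$, so the two subcases fit together without a gap.
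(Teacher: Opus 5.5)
Your proposal is correct and matches the paper's proof essentially step for step. It uses the same three cases on $\eta$, the same split at $\theta=1$ to remove the absolute value when $0<\eta<1$, and the same bound $|1-\theta|+\theta\eta\geq|1-\theta|+\theta\geq1$ when $\eta\geq1$; your factorization $1+1/\eta=(1+\eta)/\eta$ just spells out the equivalence $\eta(\theta-1+\theta\eta)<1\Leftrightarrow\theta<1/\eta$ that the paper states in one line.
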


\begin{proof}
	If $\eta=0$, then \eqref{eq:chi-expression} immediately gives
	$\chi_\theta=0$ for every $\theta\geq0$.
	
	Suppose now that $0<\eta<1$. For $0\leq\theta\leq1$, we have
$
		\chi_\theta
		=
		\eta\left(1-\theta+\theta\eta\right)
		\leq\eta<1.
$
	For $\theta>1$, equation~\eqref{eq:chi-expression} gives
$
		\chi_\theta<1
	~\Longleftrightarrow~
		\eta\left(\theta-1+\theta\eta\right)<1
		~\Longleftrightarrow~
		\theta<\frac{1}{\eta}.
$
	Since $1/\eta>1$, these two intervals combine to give
	\eqref{eq:theta-range}.
	
	Finally, let $\eta\geq1$. Since $\theta\geq0$,
$
		\chi_\theta
		=
		\eta\left(|1-\theta|+\theta\eta\right)
		\geq
		\eta\left(|1-\theta|+\theta\right)
		\geq1.
$
	Hence, the convergence condition cannot hold for any
	$\theta\geq0$.
\end{proof}

The following corollary connects the unknown solution error with the
computable residual of GAVE~\eqref{eq:GAVE}.

	\begin{corollary}\label{cor:residual-error-bound}
		Under the assumptions of
		Theorem~\ref{thm:global-convergence}, the unique solution $x^*$ of
		GAVE~\eqref{eq:GAVE} satisfies
		\begin{equation}\label{eq:residual-error-bound}
			|x-x^*|
			\leq
			(I_n-\mathcal{K})^{-1}
			\left|
			C^{-1}\left(Ax-B|x|-b\right)
			\right|
		\end{equation}
		for every $x\in\mathbb{R}^n$.
	\end{corollary}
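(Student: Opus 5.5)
The plan is to express $x-x^*$ through the operator $T$ of \eqref{eq:max-map} and the residual, then invert $I_n-\mathcal{K}$ using Lemma~\ref{lem:nonnegative-matrix}.

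Fix $x\in\mathbb{R}^n$ and write $r=Ax-B|x|-b$. By Lemma~\ref{lem:max-reformulation}, or more precisely by the identity $|x|=2\max\{0,x\}-x$ underlying it, we have $Ax-B|x|=Cx-2B\max\{0,x\}$. Hence
\begin{equation*}
	C^{-1}r = x - C^{-1}\bigl(2B\max\{0,x\}+b\bigr) = x - T(x).
\end{equation*}
This identity holds for every $x$, not only at solutions. It is the key step and needs only a direct computation.

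Next, since $T(x^*)=x^*$, we can decompose
\begin{equation*}
	x-x^* = \bigl(x-T(x)\bigr) + \bigl(T(x)-T(x^*)\bigr).
\end{equation*}
Taking absolute values and using \eqref{eq:T-majorization} from the proof of Theorem~\ref{thm:global-convergence} gives
\begin{equation*}
	|x-x^*| \leq |C^{-1}r| + \mathcal{K}|x-x^*|,
	\qquad\text{that is,}\qquad
	(I_n-\mathcal{K})|x-x^*| \leq |C^{-1}r|.
\end{equation*}

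To conclude, I use that the proof of Theorem~\ref{thm:global-convergence} shows $\chi_\theta<1$ implies $\eta=\rho(\mathcal{K})<1$. Since $\mathcal{K}\geq O_n$, Lemma~\ref{lem:nonnegative-matrix} then gives $(I_n-\mathcal{K})^{-1}\geq O_n$. Multiplying the last inequality on the left by this non-negative matrix preserves the componentwise inequality and yields \eqref{eq:residual-error-bound}.

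An alternative route avoids the inverse altogether: iterate $|x-x^*|\leq |C^{-1}r|+\mathcal{K}|x-x^*|$ to obtain
\begin{equation*}
	|x-x^*| \leq \sum_{j=0}^{s-1}\mathcal{K}^j|C^{-1}r| + \mathcal{K}^s|x-x^*|,
\end{equation*}
and then let $s\to\infty$. The only real care point in either route is that multiplication by $(I_n-\mathcal{K})^{-1}$ must preserve the inequality, which requires its non-negativity. That in turn rests on establishing $\eta<1$ first, and it is not automatic from invertibility alone.
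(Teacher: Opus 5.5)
Your proof is correct and follows the paper's argument essentially step for step. Both use the identity $x-T(x)=C^{-1}(Ax-B|x|-b)$, then the triangle inequality with \eqref{eq:T-majorization} to get $(I_n-\mathcal{K})|x-x^*|\leq|x-T(x)|$, and finally multiplication by the non-negative Neumann-series inverse, justified by $\eta<1$ from the proof of Theorem~\ref{thm:global-convergence}; your alternative iterative route is simply that Neumann series written out term by term.
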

	
\begin{proof}
	Since $T(x^*)=x^*$, inequality~\eqref{eq:T-majorization} gives
	\begin{align*}
		|x-x^*|
		&\leq
		|x-T(x)|+|T(x)-T(x^*)|\\
		&\leq
		|x-T(x)|+\mathcal{K}|x-x^*|.
	\end{align*}
	Therefore,
	\begin{equation}\label{eq:residual-intermediate}
		(I_n-\mathcal{K})|x-x^*|
		\leq
		|x-T(x)|.
	\end{equation}
	By Theorem~\ref{thm:global-convergence},
	$\rho(\mathcal{K})=\eta<1$. Since $\mathcal{K}\geq O_n$,
	Lemma~\ref{lem:nonnegative-matrix} gives
	\begin{equation}\label{eq:K-Neumann-series}
		(I_n-\mathcal{K})^{-1}
		=
		\sum_{j=0}^{\infty}\mathcal{K}^j
		\geq O_n.
	\end{equation}
	Multiplying \eqref{eq:residual-intermediate} by the non-negative matrix
	in \eqref{eq:K-Neumann-series} yields
	\begin{equation*}
		|x-x^*|
		\leq
		(I_n-\mathcal{K})^{-1}|x-T(x)|.
	\end{equation*}
	Moreover, from the definition of $T$ and the identity
	$2\max\{0,x\}=x+|x|$, we have
	\begin{align*}
		x-T(x)
		&=
		C^{-1}
		\left(
		Cx-2B\max\{0,x\}-b
		\right)\\
		&=
		C^{-1}
		\left(
		Ax-B|x|-b
		\right).
	\end{align*}
	Substitution into the preceding inequality proves
	\eqref{eq:residual-error-bound}.
\end{proof}
	
	\begin{remark}\label{rem:residual-bound}
		Corollary~\ref{cor:residual-error-bound} is useful when the exact
		solution is unknown. The residual
		$Ax-B|x|-b$ can be evaluated directly from an approximation $x$, and
		\eqref{eq:residual-error-bound} shows that, under the stated
		assumptions, a small residual gives a controlled solution error. It
		therefore provides theoretical support for the residual-based stopping
		criterion used in the numerical experiments. The estimate is primarily
		theoretical, since explicitly computing the inverse matrices appearing
		in the bound is not required in the implementation.
	\end{remark}
	
We next derive two error estimates for RMNSI. The first measures the
error using the initial iterate, while the second uses the difference
between two successive iterates.
	
	\begin{corollary}\label{cor:iteration-error-bounds}
		Under the assumptions of
		Theorem~\ref{thm:global-convergence}, the RMNSI iterates satisfy
		\begin{equation}\label{eq:apriori-error}
			|x^{(k)}-x^*|
			\leq
			\mathcal{H}_\theta^k
			(I_n-\mathcal{H}_\theta)^{-1}
			|x^{(1)}-x^{(0)}|
		\end{equation}
		and
		\begin{equation}\label{eq:aposteriori-error}
			|x^{(k)}-x^*|
			\leq
			(I_n-\mathcal{H}_\theta)^{-1}
			|x^{(k+1)}-x^{(k)}|.
		\end{equation}
	\end{corollary}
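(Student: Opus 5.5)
Both estimates will follow from the componentwise contraction \eqref{eq:RMNSI-error}, $|x^{(k+1)}-x^*|\leq\mathcal{H}_\theta|x^{(k)}-x^*|$, proved in Theorem~\ref{thm:global-convergence}, together with the Neumann series for $\mathcal{H}_\theta$. Since $\mathcal{H}_\theta\geq O_n$ and $\rho(\mathcal{H}_\theta)=\chi_\theta<1$, Lemma~\ref{lem:nonnegative-matrix} gives $(I_n-\mathcal{H}_\theta)^{-1}=\sum_{j\geq0}\mathcal{H}_\theta^j\geq O_n$ and $\mathcal{H}_\theta^k\to O_n$. Because this inverse is non-negative, we may multiply vector inequalities by it without reversing them.

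\emph{A posteriori bound \eqref{eq:aposteriori-error}.} I would write
\[
|x^{(k)}-x^*|\leq|x^{(k)}-x^{(k+1)}|+|x^{(k+1)}-x^*|\leq|x^{(k+1)}-x^{(k)}|+\mathcal{H}_\theta|x^{(k)}-x^*|,
\]
using \eqref{eq:RMNSI-error} in the second step. This gives $(I_n-\mathcal{H}_\theta)|x^{(k)}-x^*|\leq|x^{(k+1)}-x^{(k)}|$. Multiplying on the left by $(I_n-\mathcal{H}_\theta)^{-1}\geq O_n$ yields \eqref{eq:aposteriori-error}. This is the same mechanism used in Corollary~\ref{cor:residual-error-bound}.

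\emph{A priori bound \eqref{eq:apriori-error}.} Applying \eqref{eq:aposteriori-error} at $k=0$ gives $|x^{(0)}-x^*|\leq(I_n-\mathcal{H}_\theta)^{-1}|x^{(1)}-x^{(0)}|$. Combining this with \eqref{eq:global-error-bound}, $|x^{(k)}-x^*|\leq\mathcal{H}_\theta^k|x^{(0)}-x^*|$, and using $\mathcal{H}_\theta^k\geq O_n$ to preserve the inequality, we obtain
\[
|x^{(k)}-x^*|\leq\mathcal{H}_\theta^k(I_n-\mathcal{H}_\theta)^{-1}|x^{(1)}-x^{(0)}|.
\]

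The only point needing care is the order of the non-negative factors when chaining inequalities: we must keep $\mathcal{H}_\theta^k$ on the left of $(I_n-\mathcal{H}_\theta)^{-1}$. These two matrices commute anyway, since the inverse is a power series in $\mathcal{H}_\theta$, so no genuine obstacle arises.

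I deliberately avoid the route that sums successive differences $|x^{(j+1)}-x^{(j)}|$. That route would require the bound $|x^{(j+1)}-x^{(j)}|\leq\mathcal{H}_\theta|x^{(j)}-x^{(j-1)}|$, which is not immediate for $S_\theta$ because of the cross terms in the intermediate step. Going through $x^*$ as above avoids this issue.
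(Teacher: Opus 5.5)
Your proof is correct, but it takes a different route from the paper's. The paper first states the two-point estimate $|S_\theta(x)-S_\theta(y)|\leq\mathcal{H}_\theta|x-y|$ for all $x,y\in\mathbb{R}^n$. From it the paper derives $|x^{(k+j+1)}-x^{(k+j)}|\leq\mathcal{H}_\theta^j|x^{(k+1)}-x^{(k)}|$ and obtains \eqref{eq:aposteriori-error} by telescoping the tail of successive differences out to the limit $x^*$. It then gets \eqref{eq:apriori-error} from $|x^{(k+1)}-x^{(k)}|\leq\mathcal{H}_\theta^k|x^{(1)}-x^{(0)}|$, plus an argument that $\mathcal{H}_\theta^k$ commutes with $(I_n-\mathcal{H}_\theta)^{-1}$.

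You use only the one-sided contraction \eqref{eq:RMNSI-error} toward $x^*$ and a single triangle inequality, the same mechanism as in Corollary~\ref{cor:residual-error-bound}. You then obtain the a priori bound by inserting the case $k=0$ into \eqref{eq:global-error-bound}. Your route is shorter and needs no telescoping sum and no limit $x^{(k+j)}\to x^*$. It also produces $\mathcal{H}_\theta^k(I_n-\mathcal{H}_\theta)^{-1}$ in the stated order, so no commutation step is needed. The paper's route is the standard contraction-mapping argument. Its by-product, the geometric decay of successive differences, is of independent use, for example in stopping tests, but your argument does not need it.

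One correction to your final paragraph: the two-point bound for $S_\theta$ is in fact immediate, and there are no troublesome cross terms. The estimate \eqref{eq:T-majorization} holds for arbitrary pairs $x,y$, not only for $y=x^*$. Writing $G_\theta(x)=(1-\theta)x+\theta T(x)$, one gets $|G_\theta(x)-G_\theta(y)|\leq(|1-\theta|I_n+\theta\mathcal{K})|x-y|$. Since $\mathcal{K}\geq O_n$, it follows that $|S_\theta(x)-S_\theta(y)|=|T(G_\theta(x))-T(G_\theta(y))|\leq\mathcal{K}|G_\theta(x)-G_\theta(y)|\leq\mathcal{H}_\theta|x-y|$. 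The paper's route is therefore equally sound. Your stated reason for avoiding it is mistaken, although this does not affect the validity of your proof.
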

	
\begin{proof}
	The argument used to obtain \eqref{eq:RMNSI-error} also shows that the
	RMNSI iteration map $S_\theta$ satisfies
	\begin{equation*}
		|S_\theta(x)-S_\theta(y)|
		\leq
		\mathcal{H}_\theta|x-y|,
		\qquad x,y\in\mathbb{R}^n.
	\end{equation*}
	Consequently,
	\begin{equation*}
		|x^{(k+j+1)}-x^{(k+j)}|
		\leq
		\mathcal{H}_\theta^j
		|x^{(k+1)}-x^{(k)}|,
		\qquad j\geq0.
	\end{equation*}
	
	By Theorem~\ref{thm:global-convergence},
	$x^{(k+j)}\to x^*$ as $j\to\infty$. Furthermore,
	$\mathcal{H}_\theta\geq O_n$ and
	$\rho(\mathcal{H}_\theta)<1$. Hence,
	Lemma~\ref{lem:nonnegative-matrix} gives
	\begin{equation*}
		(I_n-\mathcal{H}_\theta)^{-1}
		=
		\sum_{j=0}^{\infty}\mathcal{H}_\theta^j
		\geq O_n.
	\end{equation*}
	Summing the successive differences therefore yields
\begin{equation*}
	|x^*-x^{(k)}|
	\leq \sum_{j=0}^{\infty}|x^{(k+j+1)}-x^{(k+j)}|
	\leq \sum_{j=0}^{\infty}\mathcal{H}_\theta^j|x^{(k+1)}-x^{(k)}|
	=(I_n-\mathcal{H}_\theta)^{-1}|x^{(k+1)}-x^{(k)}|.
\end{equation*}
	This proves \eqref{eq:aposteriori-error}.
	
Using this inequality repeatedly gives
	\begin{equation*}
		|x^{(k+1)}-x^{(k)}|
		\leq
		\mathcal{H}_\theta^k
		|x^{(1)}-x^{(0)}|.
	\end{equation*}
	Substitution into \eqref{eq:aposteriori-error} yields
	\begin{equation*}
		|x^{(k)}-x^*|
		\leq
		(I_n-\mathcal{H}_\theta)^{-1}
		\mathcal{H}_\theta^k
		|x^{(1)}-x^{(0)}|.
	\end{equation*}
	Since $(I_n-\mathcal{H}_\theta)^{-1}$ is represented by a power
	series in $\mathcal{H}_\theta$, it commutes with
	$\mathcal{H}_\theta^k$. Hence,
	\begin{equation*}
		|x^{(k)}-x^*|
		\leq
		\mathcal{H}_\theta^k
		(I_n-\mathcal{H}_\theta)^{-1}
		|x^{(1)}-x^{(0)}|,
	\end{equation*}
	which proves \eqref{eq:apriori-error}.
\end{proof}

\begin{remark}\label{rem:iteration-error-bounds}
	Estimate~\eqref{eq:apriori-error} controls the error after $k$
	iterations using the initial difference $|x^{(1)}-x^{(0)}|$.
	Estimate~\eqref{eq:aposteriori-error}, on the other hand, uses the
	currently available difference $|x^{(k+1)}-x^{(k)}|$. It shows that
	small changes between successive iterates give a controlled solution
	error and can therefore guide a practical stopping criterion. The
	inverse appearing in these estimates is needed only for the analysis
	and is not formed during the iteration.
\end{remark}

	The global error estimate further gives a linear convergence result for
	RMNSI.
\begin{corollary}\label{cor:linear-convergence}
	Under the assumptions of
	Theorem~\ref{thm:global-convergence}, RMNSI is $R$-linearly
	convergent. More precisely, for every $r$ satisfying
	$
	\chi_\theta<r<1,
	$
	there exists a constant $c_r>0$ such that
	\begin{equation}\label{eq:R-linear-rate}
		\|x^{(k)}-x^*\|
		\leq
		c_r r^k\|x^{(0)}-x^*\|,
		\qquad k=0,1,2,\ldots,
	\end{equation}
	where $\|\cdot\|$ denotes any fixed vector norm on
	$\mathbb{R}^n$.
\end{corollary}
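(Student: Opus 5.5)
The argument starts from the componentwise global error bound \eqref{eq:global-error-bound} established in the proof of Theorem~\ref{thm:global-convergence}:
\[
|x^{(k)}-x^*|\leq\mathcal{H}_\theta^k|x^{(0)}-x^*|,\qquad k=0,1,2,\ldots .
\]
Everything is routed through the infinity norm first, since it is monotone on non-negative vectors. For vectors $0\leq v\leq w$ we have $\|v\|_\infty\leq\|w\|_\infty$. Moreover, for a non-negative matrix $G$ and any vector $v$,
\[
\|G|v|\|_\infty\leq\|G\|_\infty\|v\|_\infty .
\]
Applying both facts to the displayed bound gives
\[
\|x^{(k)}-x^*\|_\infty\leq\|\mathcal{H}_\theta^k\|_\infty\,\|x^{(0)}-x^*\|_\infty .
\]

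Next, fix $r$ with $\chi_\theta<r<1$. By \eqref{eq:chi-expression}, $\chi_\theta=\rho(\mathcal{H}_\theta)$, so $r>\rho(\mathcal{H}_\theta)$. Lemma~\ref{lem:spectral-properties}(ii), applied to $G=\mathcal{H}_\theta$ with the induced infinity norm and $s=r$, then provides a constant $c>0$ such that
\[
\|\mathcal{H}_\theta^k\|_\infty\leq c\,r^k\quad\text{for all }k\geq0 .
\]
Combining the two steps yields
\[
\|x^{(k)}-x^*\|_\infty\leq c\,r^k\|x^{(0)}-x^*\|_\infty .
\]

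It remains to pass to an arbitrary fixed norm $\|\cdot\|$. Since all norms on $\mathbb{R}^n$ are equivalent, there are constants $\alpha,\beta>0$ with
\[
\alpha\|v\|_\infty\leq\|v\|\leq\beta\|v\|_\infty\quad\text{for all }v\in\mathbb{R}^n .
\]
Then
\[
\|x^{(k)}-x^*\|\leq\beta\|x^{(k)}-x^*\|_\infty\leq\beta c\,r^k\|x^{(0)}-x^*\|_\infty\leq\frac{\beta c}{\alpha}\,r^k\|x^{(0)}-x^*\| ,
\]
so \eqref{eq:R-linear-rate} holds with $c_r=\beta c/\alpha$. This matches Definition~\ref{def:R-linear} with the constant $c_r\|x^{(0)}-x^*\|$. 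If $x^{(0)}=x^*$ the iterates stay at $x^*$ and there is nothing to prove. Otherwise the multiplier $c_r\|x^{(0)}-x^*\|$ is positive, so the $R$-linear convergence claim follows.

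The main point needing care is the first step: the passage from the componentwise inequality to a norm inequality. It relies on the monotonicity of $\|\cdot\|_\infty$ on non-negative vectors and on $\mathcal{H}_\theta\geq O_n$. Everything else is a direct application of Lemma~\ref{lem:spectral-properties}(ii) together with norm equivalence.
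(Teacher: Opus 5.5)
Your proposal is correct and follows essentially the same route as the paper: you take the infinity norm in \eqref{eq:global-error-bound}, bound $\|\mathcal{H}_\theta^k\|_\infty\leq c\,r^k$ via Lemma~\ref{lem:spectral-properties}(ii), and pass to an arbitrary fixed norm by norm equivalence. Beyond the paper, you make the equivalence constants $\alpha,\beta$ and the degenerate case $x^{(0)}=x^*$ explicit. One small citation slip: the identity $\chi_\theta=\rho(\mathcal{H}_\theta)$ is the definition \eqref{eq:H-chi}, not \eqref{eq:chi-expression}.
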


\begin{proof}
	Since
	$\rho(\mathcal{H}_\theta)=\chi_\theta<r$,
	Lemma~\ref{lem:spectral-properties}(ii) gives
	\begin{equation*}
		\|\mathcal{H}_\theta^k\|_\infty
		\leq
		c_r r^k,
		\qquad k=0,1,2,\ldots,
	\end{equation*}
	for some constant $c_r>0$. Taking the infinity norm in
	\eqref{eq:global-error-bound} yields
	\begin{equation*}
		\|x^{(k)}-x^*\|_\infty
		\leq
		\|\mathcal{H}_\theta^k\|_\infty
		\|x^{(0)}-x^*\|_\infty
		\leq
		c_r r^k\|x^{(0)}-x^*\|_\infty.
	\end{equation*}
	Definition~\ref{def:R-linear} therefore shows that the sequence
	converges $R$-linearly to $x^*$ in the infinity norm. Since all norms
	on $\mathbb{R}^n$ are equivalent, the same conclusion holds for any
	fixed vector norm, with a possibly different constant.
\end{proof}

The value of $\theta$ plays an important role in the convergence of
RMNSI. The following result describes its effect on the convergence
bound and compares RMNSI with the basic maximum-based iteration.

\begin{theorem}\label{thm:convergence-rate}
	Let $0<\eta<1$ and $0\leq\theta<1/\eta$. Then
	$\chi_\theta$ is strictly decreasing on $[0,1]$ and strictly increasing
	on $[1,1/\eta)$. Hence, its minimum is attained at
	\begin{equation}\label{eq:global-optimal-theta}
		\theta=1,
		\qquad
		\chi_1=\eta^2.
	\end{equation}
	Moreover, the RMNSI bound $\chi_\theta$ is smaller than the bound
	$\eta$ of the basic maximum-based iteration if and only if
	\begin{equation}\label{eq:improvement-range}
		0<\theta<\frac{2}{1+\eta}.
	\end{equation}
\end{theorem}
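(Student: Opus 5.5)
The argument rests entirely on the closed form \eqref{eq:chi-expression} from Theorem~\ref{thm:global-convergence}, $\chi_\theta=\eta(|1-\theta|+\theta\eta)$. Removing the absolute value splits it into two affine pieces in $\theta$, and everything then reduces to elementary sign checks on their slopes.

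\textbf{Monotonicity on each side of $\theta=1$.} On $[0,1]$ we have $|1-\theta|=1-\theta$, so
$$\chi_\theta=\eta\bigl(1-\theta(1-\eta)\bigr).$$
This is affine with slope $-\eta(1-\eta)<0$, since $0<\eta<1$, and is therefore strictly decreasing. On $[1,1/\eta)$ we have $|1-\theta|=\theta-1$, so
$$\chi_\theta=\eta\bigl(\theta(1+\eta)-1\bigr).$$
This is affine with slope $\eta(1+\eta)>0$, and is therefore strictly increasing. Note that $1<1/\eta$, so the second interval is nonempty. Because $\chi_\theta$ is continuous at $\theta=1$, the minimum over $[0,1/\eta)$ is attained uniquely at $\theta=1$, where $\chi_1=\eta\cdot\eta=\eta^2$.

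\textbf{Comparison with the basic iteration.} Taking $\theta=0$ gives $\chi_0=\eta$, which is the bound for the basic maximum-based iteration (Remark~\ref{rem:theta-cases}). Since $\eta>0$, the inequality $\chi_\theta<\eta$ is equivalent to $|1-\theta|+\theta\eta<1$. We treat the two ranges separately.
\begin{itemize}
\item For $0\le\theta\le1$, the condition becomes $1-\theta(1-\eta)<1$, which holds if and only if $\theta>0$.
\item For $\theta\ge1$, the condition becomes $\theta(1+\eta)<2$, which holds if and only if $\theta<2/(1+\eta)$. Since $\eta<1$, we have $2/(1+\eta)>1$.
\end{itemize}
Combining the two ranges gives exactly $0<\theta<2/(1+\eta)$. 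This interval lies inside the admissible range because $2/(1+\eta)<1/\eta$, which is equivalent to $2\eta<1+\eta$, i.e.\ $\eta<1$. Hence \eqref{eq:improvement-range} is consistent with Corollary~\ref{cor:theta-range}.

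No step here is a genuine obstacle. The one point needing care is the endpoint $\theta=1$: it belongs to both pieces, and we must confirm that the two formulas agree there and that both strict inequalities are retained in the ``if and only if'' statement for \eqref{eq:improvement-range}.
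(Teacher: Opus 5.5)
Your proposal is correct and follows the paper's proof essentially step for step. Both arguments split $\chi_\theta=\eta(|1-\theta|+\theta\eta)$ into the affine pieces $\eta-\eta(1-\eta)\theta$ on $[0,1]$ and $\eta(1+\eta)\theta-\eta$ on $[1,1/\eta)$, read off the monotonicity from the slopes, and then reduce $\chi_\theta<\eta$ to $|1-\theta|+\theta\eta<1$ and solve it on each piece, with your additional check that $2/(1+\eta)<1/\eta$ making explicit a consistency point the paper leaves implicit.
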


\begin{proof}
	For $0\leq\theta\leq1$, equation~\eqref{eq:chi-expression} gives
$
		\chi_\theta
		=
		\eta\left(1-\theta+\theta\eta\right)
		=
		\eta-\eta(1-\eta)\theta.
$
	Since $0<\eta<1$, this expression decreases as $\theta$ increases.
	For $1\leq\theta<1/\eta$, we have
$
		\chi_\theta
		=
		\eta\left(\theta-1+\theta\eta\right)
		=
		\eta(1+\eta)\theta-\eta,
$
	which increases with $\theta$. Therefore, $\chi_\theta$ is minimized
	at $\theta=1$, and its minimum value is $\eta^2$.
	
	The basic maximum-based iteration has the bound $\eta$. Hence, RMNSI
	has a smaller bound when
	\begin{equation*}
		\chi_\theta<\eta
		\quad\Longleftrightarrow\quad
		|1-\theta|+\theta\eta<1.
	\end{equation*}
	For $0\leq\theta\leq1$, this condition holds whenever $\theta>0$.
	For $\theta>1$, it is equivalent to
	$\theta<2/(1+\eta)$. Combining the two cases gives
	\eqref{eq:improvement-range}.
\end{proof}

\begin{remark}\label{rem:global-optimality}
	Theorem~\ref{thm:convergence-rate} identifies $\theta=1$ as the
	minimizer of the global upper bound $\chi_\theta$. This does not
	imply that $\theta=1$ minimizes the observed iteration count, because
	the bound is based on component-wise absolute values and can be
	conservative. In particular, it does not account for sign
	cancellation or the active sign pattern near the solution. The local
	analysis given below shows that the preferred value of $\theta$
	depends on the spectrum of the local iteration matrix and can
	therefore differ from one.
\end{remark}

	Computing the spectral radius in
	Theorem~\ref{thm:global-convergence} can be expensive for large
	problems. The following corollary gives a simpler condition using the
	matrix infinity norm.

	\begin{corollary}\label{cor:norm-condition}
		Let $\theta\geq0$ and suppose that $C=A+B$ is non-singular. Define
		\begin{equation}\label{eq:xi-definition}
			\xi
			=
			2\left\|C^{-1}B\right\|_\infty.
		\end{equation}
		If
		\begin{equation}\label{eq:norm-convergence-condition}
			\xi\left(|1-\theta|+\theta\xi\right)<1,
		\end{equation}
		then GAVE~\eqref{eq:GAVE} has a unique solution, and the sequence
		generated by Method~\ref{method:RMNSI} converges to it for every
		$x^{(0)}\in\mathbb{R}^n$.
	\end{corollary}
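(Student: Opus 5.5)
The plan is to reduce the corollary to Theorem~\ref{thm:global-convergence} by showing that \eqref{eq:norm-convergence-condition} forces $\chi_\theta<1$. The link between the two quantities is that $\mathcal{K}=2|C^{-1}B|$ satisfies $\|\mathcal{K}\|_\infty=2\|C^{-1}B\|_\infty=\xi$. This holds because the infinity norm of a matrix depends only on the absolute values of its entries. By Lemma~\ref{lem:spectral-properties}(iv) we then get $\eta=\rho(\mathcal{K})\leq\|\mathcal{K}\|_\infty=\xi$.

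Next I compare the scalar functions. Set $g(t)=t\left(|1-\theta|+\theta t\right)$ for $t\geq0$. Since $\theta\geq0$ and $|1-\theta|\geq0$, $g$ is nondecreasing on $[0,\infty)$. By \eqref{eq:chi-expression}, $\chi_\theta=g(\eta)$, so $\eta\leq\xi$ gives $\chi_\theta=g(\eta)\leq g(\xi)<1$, the last inequality being the hypothesis \eqref{eq:norm-convergence-condition}.

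Alternatively, one can work at the matrix level without using \eqref{eq:chi-expression}. Subadditivity and submultiplicativity of $\|\cdot\|_\infty$ give
\[
\|\mathcal{H}_\theta\|_\infty\leq|1-\theta|\,\|\mathcal{K}\|_\infty+\theta\|\mathcal{K}\|_\infty^2=g(\xi),
\]
and hence $\chi_\theta\leq\|\mathcal{H}_\theta\|_\infty<1$ by Lemma~\ref{lem:spectral-properties}(iv). I will mention this as a cross-check.

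In either version, condition \eqref{eq:main-convergence-condition} holds. Theorem~\ref{thm:global-convergence} then gives existence and uniqueness of the solution of GAVE~\eqref{eq:GAVE} and convergence of Method~\ref{method:RMNSI} from every $x^{(0)}\in\mathbb{R}^n$. There is no real obstacle here. The only points to state explicitly are the identity $\|\,|G|\,\|_\infty=\|G\|_\infty$ and the monotonicity of $g$, which requires $\theta\geq0$.
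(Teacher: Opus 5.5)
Your proposal is correct and follows the paper's proof. Both arguments bound $\eta=\rho(\mathcal{K})\leq\|\mathcal{K}\|_\infty=\xi$ via Lemma~\ref{lem:spectral-properties}(iv), then use \eqref{eq:chi-expression} and the monotonicity of $t\mapsto t(|1-\theta|+\theta t)$ on $[0,\infty)$ to get $\chi_\theta<1$, and finally invoke Theorem~\ref{thm:global-convergence}. Your matrix-level cross-check $\|\mathcal{H}_\theta\|_\infty\leq\xi(|1-\theta|+\theta\xi)$ is also valid, and it has the minor advantage of not needing the exact formula for $\chi_\theta$.
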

	
\begin{proof}
	Since
	$
		\mathcal{K}=2|C^{-1}B|,
	$
	Lemma~\ref{lem:spectral-properties}(iv) gives
$
		\eta
		=
		\rho(\mathcal{K})
		\leq
		\|\mathcal{K}\|_\infty
		=
		2\|C^{-1}B\|_\infty
		=
		\xi.
$

	The function
	\begin{equation*}
		t\longmapsto t\left(|1-\theta|+\theta t\right)
	\end{equation*}
	is non-decreasing for $t\geq0$. Therefore,
	\begin{equation*}
		\chi_\theta
		=
		\eta\left(|1-\theta|+\theta\eta\right)
		\leq
		\xi\left(|1-\theta|+\theta\xi\right)
		<1.
	\end{equation*}
	The conclusion follows from
	Theorem~\ref{thm:global-convergence}.
\end{proof}

We next establish a structural convergence condition based on
$M$-matrices. This condition avoids the direct use of
$\mathcal{K}= 2 \left|(A+B)^{-1}B\right|$ and also provides an admissible range for
the relaxation parameter.

\begin{theorem}\label{thm:M-matrix-condition}
		Let $C=A+B$ and suppose that
		\begin{equation}\label{eq:M-matrix-condition}
			\langle C\rangle-2|B|
		\end{equation}
		is a non-singular $M$-matrix. Define
		\begin{equation}\label{eq:eta-hat}
			\widehat{\eta}
			=
			\rho\left(
			2\langle C\rangle^{-1}|B|
			\right).
		\end{equation}
		Then $C$ is non-singular and $0\leq\widehat{\eta}<1$. Moreover, RMNSI
		converges to the unique solution of GAVE~\eqref{eq:GAVE} for every
		initial vector whenever
		\begin{equation}\label{eq:theta-M-matrix}
			0\leq\theta<\frac{1}{\widehat{\eta}}.
		\end{equation}
		If $\widehat{\eta}=0$, the upper bound in
		\eqref{eq:theta-M-matrix} is understood as infinity. In particular,
		every $\theta\in[0,1]$ is admissible.
	\end{theorem}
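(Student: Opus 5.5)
The plan is to reduce everything to Theorem~\ref{thm:global-convergence} and Corollary~\ref{cor:theta-range}. The key comparison is $\eta\le\widehat{\eta}$, where $\eta=\rho(2|C^{-1}B|)$.

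\textbf{Step 1: $\langle C\rangle$ is an $M$-matrix and $\widehat\eta<1$.} Write $\langle C\rangle-2|B|=M-N$ with $M=\langle C\rangle$, which is a $Z$-matrix, and $N=2|B|\geq O_n$. By hypothesis $M-N$ is a non-singular $M$-matrix, so Lemma~\ref{lem:M-matrix-properties}(ii) gives two facts. First, $\langle C\rangle$ is a non-singular $M$-matrix. Second, $\rho(\langle C\rangle^{-1}2|B|)<1$, i.e. $\widehat\eta<1$. Since $\langle C\rangle^{-1}\geq O_n$ and $|B|\geq O_n$, the matrix $2\langle C\rangle^{-1}|B|$ is non-negative, so $\widehat\eta\geq0$.

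\textbf{Step 2: $C$ is non-singular.} Because $\langle C\rangle$ is a non-singular $M$-matrix, $C$ is by definition an $H$-matrix. Lemma~\ref{lem:M-matrix-properties}(i) then says $C$ is non-singular and $|C^{-1}|\leq\langle C\rangle^{-1}$.

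\textbf{Step 3: compare the two spectral radii.} Multiplying entrywise,
\begin{equation*}
\mathcal{K}=2|C^{-1}B|\leq 2|C^{-1}||B|\leq 2\langle C\rangle^{-1}|B|.
\end{equation*}
Both sides are non-negative, so the monotonicity part of Lemma~\ref{lem:perron-frobenius} gives $\eta\leq\widehat\eta<1$.

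\textbf{Step 4: admissible range of $\theta$.} Take $0\leq\theta<1/\widehat\eta$, with the bound read as $+\infty$ when $\widehat\eta=0$. Split into cases on $\eta$.
\begin{itemize}
\item If $\eta=0$, every $\theta\geq0$ is admissible by Corollary~\ref{cor:theta-range}.
\item If $0<\eta<1$, then $\widehat\eta\geq\eta>0$, so $1/\widehat\eta\leq1/\eta$. Hence $\theta<1/\eta$, and Corollary~\ref{cor:theta-range} gives $\chi_\theta<1$.
\end{itemize}
In both cases Theorem~\ref{thm:global-convergence} yields unique solvability and global convergence. Finally, $\widehat\eta<1$ means $1/\widehat\eta>1$, so every $\theta\in[0,1]$ lies in the admissible range.

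\textbf{Main obstacle.} The delicate point is Step~3, specifically the entrywise bound $|C^{-1}B|\leq|C^{-1}||B|$ combined with the $H$-matrix inverse bound. Both are standard, so I expect no real difficulty. The other point needing care is handling the $\widehat\eta=0$ convention consistently in Step~4.
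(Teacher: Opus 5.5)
Your proof is correct and follows the paper's argument essentially step for step: the $M$-splitting of $\langle C\rangle-2|B|$ via Lemma~\ref{lem:M-matrix-properties}(ii), the $H$-matrix bound $|C^{-1}|\leq\langle C\rangle^{-1}$, and Perron--Frobenius monotonicity to obtain $\eta\leq\widehat\eta<1$. The only cosmetic difference is in the last step. You compare thresholds, $1/\widehat\eta\leq1/\eta$, and invoke Corollary~\ref{cor:theta-range}, whereas the paper bounds $\chi_\theta\leq\widehat\eta(|1-\theta|+\theta\widehat\eta)<1$ directly; the two are equivalent.
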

	
\begin{proof}
	Set
	$
		\mathcal{M}=\langle C\rangle,
		~
		\mathcal{N}=2|B|.
	$
	The matrix $\mathcal{M}$ is a $Z$-matrix, and
	$\mathcal{N}\geq O_n$. Since
	$\mathcal{M}-\mathcal{N}$ is a non-singular $M$-matrix,
	Lemma~\ref{lem:M-matrix-properties}(ii) shows that $\mathcal{M}$ is a
	non-singular $M$-matrix and
$
		\rho\left(\mathcal{M}^{-1}\mathcal{N}\right)<1.
	$
	Thus, $C$ is an $H$-matrix. By
	Lemma~\ref{lem:M-matrix-properties}(i), $C$ is non-singular.
	
	The splitting
	$
		\langle C\rangle-2|B|
		=
		\mathcal{M}-\mathcal{N}
	$
	is an $M$-splitting, and
	\begin{equation*}
		\rho\left(\mathcal{M}^{-1}\mathcal{N}\right)
		=
		\rho\left(
		2\langle C\rangle^{-1}|B|
		\right)
		=
		\widehat{\eta}
		<1.
	\end{equation*}
	Furthermore, Lemma~\ref{lem:M-matrix-properties}(i) gives
$
		|C^{-1}|
		\leq
		\langle C\rangle^{-1}.
$
	Consequently,
	\begin{equation*}
		\mathcal{K}
		=
		2|C^{-1}B|
		\leq
		2|C^{-1}||B|
		\leq
		2\langle C\rangle^{-1}|B|
		=
		\mathcal{M}^{-1}\mathcal{N}.
	\end{equation*}
	Both matrices in the last comparison are non-negative. Hence, the
	monotonicity result in Lemma~\ref{lem:perron-frobenius} yields
	\begin{equation*}
		\eta\leq\widehat{\eta}<1.
	\end{equation*}
	
	If $\widehat{\eta}>0$ and
	$0\leq\theta<1/\widehat{\eta}$, then
	\begin{equation*}
		\chi_\theta
		\leq
		\widehat{\eta}
		\left(
		|1-\theta|+\theta\widehat{\eta}
		\right)
		<1.
	\end{equation*}
	If $\widehat{\eta}=0$, then $\eta=0$ and consequently
	$\chi_\theta=0$ for every $\theta\geq0$. Therefore,
	Theorem~\ref{thm:global-convergence} completes the proof.
\end{proof}

	A simpler sufficient condition can be obtained from strict row
	diagonal dominance, as stated below.
	
	\begin{corollary}\label{cor:diagonal-dominance}
		Let $C=A+B=(c_{ij})$ and $B=(b_{ij})$. Suppose that
		\begin{equation}\label{eq:diagonal-dominance}
			|c_{ii}|
			>
			\sum_{\substack{j=1\\j\neq i}}^n |c_{ij}|
			+
			2\sum_{j=1}^n |b_{ij}|,
			\qquad i=1,\ldots,n.
		\end{equation}
		Then GAVE~\eqref{eq:GAVE} has a unique solution, and RMNSI converges
		to it from every initial vector for each $\theta\in[0,1]$.
	\end{corollary}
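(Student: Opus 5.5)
The plan is to reduce the statement to Theorem~\ref{thm:M-matrix-condition}. It suffices to show that condition \eqref{eq:diagonal-dominance} makes $\langle C\rangle-2|B|$ a non-singular $M$-matrix. Once this is known, Theorem~\ref{thm:M-matrix-condition} gives the non-singularity of $C$, the bound $0\leq\widehat{\eta}<1$, and global convergence of RMNSI for every $\theta\in[0,1/\widehat{\eta})$. Since $1/\widehat{\eta}>1$, this range contains $[0,1]$, and the case $\widehat{\eta}=0$ gives an unrestricted range. Unique solvability of GAVE~\eqref{eq:GAVE} also follows, through Theorem~\ref{thm:global-convergence}, as stated in Theorem~\ref{thm:M-matrix-condition}.

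To verify the $M$-matrix property, set $P=\langle C\rangle-2|B|=(p_{ij})$. Its off-diagonal entries are $p_{ij}=-|c_{ij}|-2|b_{ij}|\leq0$ for $i\neq j$, so $P$ is a $Z$-matrix. Its diagonal entries are $p_{ii}=|c_{ii}|-2|b_{ii}|$. Hypothesis \eqref{eq:diagonal-dominance} rearranges to
\[
|c_{ii}|-2|b_{ii}|>\sum_{j\neq i}\bigl(|c_{ij}|+2|b_{ij}|\bigr)=\sum_{j\neq i}|p_{ij}|\geq0 .
\]
This shows at once that $p_{ii}>0$ and that $P$ is strictly row diagonally dominant. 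Lemma~\ref{lem:M-matrix-properties}(iii) then shows that $P$ is a non-singular $M$-matrix.

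No step is a real obstacle. The only point needing care is the bookkeeping of the diagonal term: the sum $2\sum_{j}|b_{ij}|$ in \eqref{eq:diagonal-dominance} includes $j=i$, and this term must be moved to the diagonal side so that positivity of $p_{ii}$ and strict dominance both come out of the same inequality. After that, the conclusion is a direct application of Theorem~\ref{thm:M-matrix-condition}.
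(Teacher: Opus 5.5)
Your proposal is correct and follows the paper's proof essentially step for step. The paper also rewrites \eqref{eq:diagonal-dominance} as $|c_{ii}|-2|b_{ii}|>\sum_{j\neq i}\left(|c_{ij}|+2|b_{ij}|\right)$ and concludes via Lemma~\ref{lem:M-matrix-properties}(iii) that $\langle C\rangle-2|B|$ is a non-singular $M$-matrix. It then invokes Theorem~\ref{thm:M-matrix-condition}, whose admissible range $[0,1/\widehat{\eta})$ contains $[0,1]$.
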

	
\begin{proof}
	Condition~\eqref{eq:diagonal-dominance} is equivalent to
	\begin{equation*}
		|c_{ii}|-2|b_{ii}|
		>
		\sum_{\substack{j=1\\j\neq i}}^n
		\left(|c_{ij}|+2|b_{ij}|\right),
		\qquad i=1,\ldots,n.
	\end{equation*}
	Hence, $\langle C\rangle-2|B|$ is a strictly row diagonally dominant
	$Z$-matrix with positive diagonal entries. By
	Lemma~\ref{lem:M-matrix-properties}(iii), it is a non-singular
	$M$-matrix. The conclusion now follows from
	Theorem~\ref{thm:M-matrix-condition}.
\end{proof}

The preceding results describe the global behaviour of RMNSI and
guarantee convergence from an arbitrary initial vector.However, the global estimate replaces the signed matrix \(2C^{-1}B\) by its component-wise absolute value and considers all possible sign patterns simultaneously. It therefore ignores cancellation among
the matrix entries and can yield a conservative condition.

A sharper description is possible near a solution \(x^*\) whose
components are all nonzero. In this case, the sign pattern of \(x^*\)
remains unchanged throughout a sufficiently small neighbourhood of
\(x^*\). Consequently, the mapping \(x\mapsto\max\{0,x\}\) agrees with a
fixed linear mapping in that neighbourhood, and the nonlinear RMNSI
iteration reduces locally to an exact linear error recurrence determined
by the active components of \(x^*\). This gives a convergence condition based on the actual sign pattern rather than on a global absolute-value bound.

Here, local convergence means that there exists a neighbourhood
\(\mathcal{V}\) of \(x^*\) such that every RMNSI sequence with
\(x^{(0)}\in\mathcal{V}\) remains in this neighbourhood and converges
to \(x^*\). The following theorem characterizes this behaviour through
the spectral radius of the corresponding local iteration matrix.

\begin{theorem}\label{thm:local-convergence}
	Let $\theta\geq0$, let $x^*$ be a solution of
	GAVE~\eqref{eq:GAVE}, and suppose that $C=A+B$ is non-singular.
	Assume that $x_i^*\neq0$ for every $i=1,\ldots,n$. Define
	\begin{equation}\label{eq:D-star}
		D_*
		=
		\operatorname{diag}(d_1,\ldots,d_n),
		\qquad
		d_i
		=
		\begin{cases}
			1, & x_i^*>0,\\
			0, & x_i^*<0,
		\end{cases}
	\end{equation}
	and
	\begin{equation}\label{eq:local-matrices}
		\mathcal{J}_*
		=
		2C^{-1}BD_*,
		\qquad
		\mathcal{R}_\theta
		=
		\mathcal{J}_*
		\left(
		(1-\theta)I_n+\theta\mathcal{J}_*
		\right).
	\end{equation}
	If
	\begin{equation}\label{eq:local-condition}
		\chi_{\mathrm{loc}}(\theta)
		:=
		\rho(\mathcal{R}_\theta)
		<1,
	\end{equation}
	then there exist an open neighbourhood $\mathcal{B}$ of $x^*$,
	a vector norm $\|\cdot\|_*$, and a constant $q\in(0,1)$ such that,
	for every $x^{(0)}\in\mathcal{B}$, the RMNSI iterates remain in
	$\mathcal{B}$ and satisfy
	\begin{equation}\label{eq:local-geometric-bound}
		\|x^{(k)}-x^*\|_*
		\leq
		q^k\|x^{(0)}-x^*\|_*,
		\qquad k=0,1,2,\ldots.
	\end{equation}
	Moreover,
	\begin{equation}\label{eq:local-factor}
		\chi_{\mathrm{loc}}(\theta)
		=
		\max_{\lambda\in\sigma(\mathcal{J}_*)}
		\left|
		\lambda\left(1-\theta+\theta\lambda\right)
		\right|.
	\end{equation}
\end{theorem}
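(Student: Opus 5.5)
We first establish the spectral formula \eqref{eq:local-factor}, and then the local contraction. Since $\mathcal{R}_\theta=p(\mathcal{J}_*)$ with the polynomial $p(t)=t(1-\theta+\theta t)$, Lemma~\ref{lem:spectral-properties}(i) gives $\sigma(\mathcal{R}_\theta)=\{\lambda(1-\theta+\theta\lambda):\lambda\in\sigma(\mathcal{J}_*)\}$. Taking the maximum modulus yields \eqref{eq:local-factor} directly. Unlike the proof of Theorem~\ref{thm:global-convergence}, no Perron--Frobenius argument is needed, because $\mathcal{J}_*$ is signed.

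Next we linearize $T$ exactly near $x^*$. Set $\delta=\min_i|x_i^*|>0$, and let $U=\{x:\|x-x^*\|_\infty<\delta\}$. For $x\in U$, each $x_i$ has the same sign as $x_i^*$, so $\max\{0,x\}=D_*x$. Hence
\begin{equation*}
T(x)-x^*=T(x)-T(x^*)=2C^{-1}BD_*(x-x^*)=\mathcal{J}_*(x-x^*),\qquad x\in U.
\end{equation*}
Now suppose both $x^{(k)}$ and $y^{(k)}$ lie in $U$. Then $u^{(k)}-x^*=\mathcal{J}_*(x^{(k)}-x^*)$, and therefore $y^{(k)}-x^*=((1-\theta)I_n+\theta\mathcal{J}_*)(x^{(k)}-x^*)$. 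Applying the same identity once more gives $x^{(k+1)}-x^*=\mathcal{R}_\theta(x^{(k)}-x^*)$, so the recurrence is exact.

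The main obstacle is making sure the intermediate point $y^{(k)}$ also stays in $U$, since $\theta$ may exceed $1$ and the map $(1-\theta)I_n+\theta\mathcal{J}_*$ can enlarge errors. We handle this with norms and a smaller ball.

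By Lemma~\ref{lem:spectral-properties}(iii) and \eqref{eq:local-condition}, there is a vector norm $\|\cdot\|_*$ whose induced norm satisfies $q:=\|\mathcal{R}_\theta\|_*<1$. If $q=0$, replace it by any number in $(0,1)$.

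By equivalence of norms, choose constants $\alpha,\beta>0$ with $\alpha\|z\|_\infty\le\|z\|_*\le\beta\|z\|_\infty$. Let $L=\max\{1,\|(1-\theta)I_n+\theta\mathcal{J}_*\|_*\}$. Define
\begin{equation*}
\mathcal{B}=\{x:\|x-x^*\|_*<r\},\qquad r=\alpha\delta/L.
\end{equation*}

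If $x^{(k)}\in\mathcal{B}$, then two estimates hold:
\begin{itemize}
\item $\|x^{(k)}-x^*\|_\infty\le r/\alpha\le\delta/L\le\delta$. Strictly, this gives $<\delta$, so $x^{(k)}\in U$.
\item Using the exact formula for $y^{(k)}$, which is valid once $x^{(k)}\in U$, we get $\|y^{(k)}-x^*\|_*\le L\|x^{(k)}-x^*\|_*<\alpha\delta$. Hence $\|y^{(k)}-x^*\|_\infty<\delta$, and $y^{(k)}\in U$.
\end{itemize}

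The exact recurrence then gives
\begin{equation*}
\|x^{(k+1)}-x^*\|_*\le q\|x^{(k)}-x^*\|_*<r,
\end{equation*}
so $x^{(k+1)}\in\mathcal{B}$. Induction on $k$ yields invariance of $\mathcal{B}$ and the bound \eqref{eq:local-geometric-bound}. Finally, $\mathcal{B}$ is open because it is a ball in the norm $\|\cdot\|_*$.
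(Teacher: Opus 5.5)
Your proof is correct and follows essentially the same route as the paper: an exact linearization $T(x)-x^*=\mathcal{J}_*(x-x^*)$ on a sign-preserving neighbourhood, a norm from Lemma~\ref{lem:spectral-properties}(iii) with $\|\mathcal{R}_\theta\|_*<1$, an invariant $\|\cdot\|_*$-ball, and the polynomial spectral mapping for \eqref{eq:local-factor}. The only difference is how you keep $y^{(k)}$ in the sign-preserving region. You use the explicit radius $r=\alpha\delta/L$ with $\delta=\min_i|x_i^*|$, whereas the paper obtains a neighbourhood $\mathcal{V}$ with $G_\theta(\mathcal{V})\subseteq\mathcal{U}$ from the continuity of $G_\theta(x)=(1-\theta)x+\theta T(x)$ at $x^*$; your version is slightly more quantitative but otherwise equivalent.
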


\begin{proof}
	Since every component of $x^*$ is nonzero, there exists a
	neighbourhood $\mathcal{U}$ of $x^*$ in which the sign pattern
	remains unchanged. Therefore,
	\begin{equation}\label{eq:local-max-relation}
		\max\{0,x\}-\max\{0,x^*\}
		=
		D_*(x-x^*),
		\qquad x\in\mathcal{U}.
	\end{equation}
	Define:
$
		G_\theta(x)
		=
		(1-\theta)x+\theta T(x).
$
	Since $G_\theta$ is continuous and $G_\theta(x^*)=x^*$, there
	exists a neighbourhood $\mathcal{V}\subseteq\mathcal{U}$ of $x^*$
	such that
	$
		G_\theta(x)\in\mathcal{U},
		~ x\in\mathcal{V}.
$
	
	Let $e^{(k)}=x^{(k)}-x^*$. For $x^{(k)}\in\mathcal{V}$,
	the first relation in \eqref{eq:RMNSI} and
	\eqref{eq:local-max-relation} give
	$
		u^{(k)}-x^*
		=
		\mathcal{J}_*e^{(k)}.
$
	The second relation in \eqref{eq:RMNSI} then gives
$
		y^{(k)}-x^*
		=
		\left(
		(1-\theta)I_n+\theta\mathcal{J}_*
		\right)e^{(k)}.
$
	Since $y^{(k)}=G_\theta(x^{(k)})\in\mathcal{U}$, applying
	\eqref{eq:local-max-relation} to $y^{(k)}$ and using the third
	relation in \eqref{eq:RMNSI} yield
	\begin{equation}\label{eq:local-error-recurrence}
		e^{(k+1)}
		=
		\mathcal{R}_\theta e^{(k)}.
	\end{equation}
	
Since $\rho(\mathcal{R}_\theta)<1$,
Lemma~\ref{lem:spectral-properties}(iii) ensures that there exists a
vector norm $\|\cdot\|_*$ whose induced matrix norm satisfies
$
\|\mathcal{R}_\theta\|_*<1.
$
Choose any $q\in(0,1)$ such that
$
\|\mathcal{R}_\theta\|_*\leq q<1.
$

	Choose $\delta>0$ sufficiently small so that
	$
		\mathcal{B}
		=
		\left\{
		x\in\mathbb{R}^n:
		\|x-x^*\|_*<\delta
		\right\}
		\subseteq\mathcal{V}.
$
	If $x^{(k)}\in\mathcal{B}$, then
	\eqref{eq:local-error-recurrence} gives
	$
		\|e^{(k+1)}\|_*
		\leq
		q\|e^{(k)}\|_*
		<
		\delta.
	$
	
	Thus, $\mathcal{B}$ is invariant under the RMNSI iteration.
	By induction,
	$
		\|e^{(k)}\|_*
		\leq
		q^k\|e^{(0)}\|_*,
$
	which proves \eqref{eq:local-geometric-bound} and shows that
	$x^{(k)}\to x^*$.
	
	Finally,
	\begin{equation*}
		\mathcal{R}_\theta
		=
		(1-\theta)\mathcal{J}_*
		+\theta\mathcal{J}_*^2.
	\end{equation*}
	Applying the polynomial spectral-mapping result in
	Lemma~\ref{lem:spectral-properties}(i) gives
$
		\rho(\mathcal{R}_\theta)
		=
		\max_{\lambda\in\sigma(\mathcal{J}_*)}
		\left|
		\lambda\left(1-\theta+\theta\lambda\right)
		\right|,
	$
	which proves \eqref{eq:local-factor}.
\end{proof}

\begin{remark}\label{rem:local-parameter}
	Formula~\eqref{eq:local-factor} shows that a locally preferred
	relaxation parameter can be chosen from
	\begin{equation}\label{eq:local-optimal-theta}
		\theta_{\mathrm{loc}}^*
		\in
		\operatorname*{arg\,min}_{\theta\geq0}
		\max_{\lambda\in\sigma(\mathcal{J}_*)}
		\left|
		\lambda\left(1-\theta+\theta\lambda\right)
		\right|.
	\end{equation}
	A single closed formula for $\theta_{\mathrm{loc}}^*$ is not available
	in general because the minimizing value depends on the location of the
	eigenvalues of $\mathcal{J}_*$. In particular, the preferred local
	value can be larger than one, although $\theta=1$ minimizes the global
	bound in Theorem~\ref{thm:convergence-rate}.
	
	When $\theta=1$, the local iteration matrix reduces to
	\begin{equation*}
		\mathcal{R}_1=\mathcal{J}_*^2,
		\qquad
		\chi_{\mathrm{loc}}(1)=\rho(\mathcal{J}_*)^2.
	\end{equation*}
If \(\mathcal{R}_\theta\) is nilpotent, the local error becomes zero
after finitely many iterations once the iterates enter the
neighbourhood in which the active sign pattern remains fixed. This
observation can explain the small iteration counts in some numerical
examples, but it does not imply quadratic convergence in general.
\end{remark}

Table~\ref{tab:convergence-conditions} compares the global and local
convergence factors with the observed behaviour of RMNSI. In the last
column, $\mathrm{G}$ indicates that the global convergence condition is
satisfied, while $\mathrm{L}$ indicates that only the local condition
holds. The notation $\mathrm{NC}$ means that neither condition provides a
convergence guarantee. Since the established conditions are sufficient
but not necessary, $\mathrm{NC}$ does not imply divergence of the
computed sequence.

\begin{table}[width=\linewidth,cols=7,pos=H]
	\caption{\(\mathrm{Global~and~local~convergence~factors~for~the~
			parameter~choices~used~in~the~experiments~with~m=10.}\)}
	\label{tab:convergence-conditions}
	\footnotesize
	\setlength{\tabcolsep}{3pt}
	\renewcommand{\arraystretch}{1.05}
	\begin{tabular*}{\tblwidth}
		{@{\extracolsep{\fill}}ccccccc@{}}
		\toprule
		\(\mathrm{Example}\) &
		\(\mu\) &
		\(\theta\) &
		\(\eta\) &
		\(\chi_{\theta}\) &
		\(\chi_{\mathrm{loc}}(\theta)\) &
		\(\mathrm{Condition}\) \\
		\midrule
		
		\multirow{3}{*}{\(\ref{Ex1}\)}
		& \(0\) & \(0.6\) & \(6.1833\) & \(25.4136\) & \(0\) & \(\mathrm{L}\) \\
		& \(2\) & \(1.3\) & \(1.0797\) & \(1.8394\)  & \(0\) & \(\mathrm{L}\) \\
		& \(4\) & \(1.5\) & \(0.9723\) & \(1.9043\)  & \(0\) & \(\mathrm{L}\) \\
		\addlinespace
		
		\multirow{3}{*}{\(\ref{Ex2}\)}
		& \(0\) & \(1\)   & \(1.8006\) & \(3.2420\) & \(0\) & \(\mathrm{L}\) \\
		& \(2\) & \(1.3\) & \(1.0064\) & \(1.6187\) & \(0\) & \(\mathrm{L}\) \\
		& \(4\) & \(1.4\) & \(0.9509\) & \(1.6464\) & \(0\) & \(\mathrm{L}\) \\
		\addlinespace
		
		\(\ref{Ex3}\)
		& \(2\) & \(2.3\) & \(0.8924\) & \(2.9919\) & \(0.1556\) & \(\mathrm{L}\) \\
		\addlinespace
		
		\multirow{3}{*}{\(\ref{Ex4}\)}
		& \(0\) & \(0.7\) & \(1.4494\) & \(1.9053\) & \(1.0357\) & \(\mathrm{NC}\) \\
		& \(2\) & \(0.7\) & \(1.0496\) & \(1.0860\) & \(0.4563\) & \(\mathrm{L}\) \\
		& \(4\) & \(0.7\) & \(0.8497\) & \(0.7603\) & \(0.2505\) & \(\mathrm{G}\) \\
		\addlinespace
		
		\multirow{3}{*}{\(\ref{Ex5}\)}
		& \(0\) & \(0.7\) & \(1.4494\) & \(1.9053\) & \(1.0357\) & \(\mathrm{NC}\) \\
		& \(2\) & \(0.7\) & \(1.0496\) & \(1.0860\) & \(0.4563\) & \(\mathrm{L}\) \\
		& \(4\) & \(0.7\) & \(0.8497\) & \(0.7603\) & \(0.2505\) & \(\mathrm{G}\) \\
		\addlinespace
		
		\multirow{3}{*}{\(\ref{Ex6}\)}
		& \(0\) & \(1.6\) & \(0.4300\) & \(0.5538\) & \(0.0199\) & \(\mathrm{G}\) \\
		& \(2\) & \(1.6\) & \(0.4082\) & \(0.5115\) & \(0.0344\) & \(\mathrm{G}\) \\
		& \(4\) & \(1.5\) & \(0.3735\) & \(0.3960\) & \(0.0262\) & \(\mathrm{G}\) \\
		
		\bottomrule
	\end{tabular*}
\end{table}

For Examples~\ref{Ex1} and \ref{Ex2}, the global condition is not
satisfied, whereas $\chi_{\mathrm{loc}}(\theta)=0$ because their exact
solutions are strictly negative. Thus, after the iterates enter the
corresponding orthant, the maximum-based map becomes locally constant.
Example~\ref{Ex3} is also supported by the local condition despite the
failure of the global bound. For Examples~\ref{Ex4} and \ref{Ex5},
increasing $\mu$ improves both convergence factors, and the global
condition is satisfied when $\mu=4$. In the unshifted case, neither
condition covers the selected parameter, showing that convergence from
the prescribed initial vector can occur outside these sufficient
conditions. Finally, both conditions hold for Example~\ref{Ex6}, in
agreement with its stable numerical performance.

\section{Numerical experiments}\label{sec5}

This section evaluates the numerical performance of the proposed RMNSI method and compares it with the FPM-M method \cite{WangMu2025}, the M3 method \cite{WuHanLi2024}, the generalized Gauss-Seidel (GGS) method \cite{LuoEtAl2026}, and the shift-splitting fixed-point iteration (FPI-SS) method \cite{LiLiDou2023}. The test set covers problems with different structures.
Examples~\ref{Ex1}, \ref{Ex2}, \ref{Ex4}, and \ref{Ex5} are derived
from linear complementarity problems, Example~\ref{Ex3} is a dense
GAVE with a mixed-sign exact solution, and Example~\ref{Ex6} arises
from the first-order optimality condition of an asymmetric
ridge-regression model. For Example~\ref{Ex3}, the dense Hilbert matrix
is used in matrix-free form, since its explicit storage at $n=50000$
would require approximately $20$~GB of memory. Thus, the experiments
cover sparse and dense coefficient matrices, different solution sign
patterns, and an optimization-based application.

All methods are initialized with
$x^{(0)}=\boldsymbol{e}\in\mathbb{R}^n$, where
\(\boldsymbol{e}\in\mathbb{R}^n\) denotes the vector of all ones.
For FPI-SS, which also requires an auxiliary vector, we set
$y^{(0)}=|x^{(0)}|=\boldsymbol{e}$.
The iterations are terminated as soon as
$\mathrm{RES}\leq 10^{-6}$ or when the maximum number of $500$ iterations is reached. For an approximation $x^{(k)}$ to the exact solution $x^*$ of GAVE~\eqref{eq:GAVE}, the reported residual and error are defined by
\begin{equation}\label{eq:numerical-measures}
	\mathrm{RES}^{(k)}
	=
	\frac{
		\left\|Ax^{(k)}-B|x^{(k)}|-b\right\|_2
	}{
		\|b\|_2
	},
	\qquad
	\mathrm{ERR}^{(k)}
	=
	\left\|x^{(k)}-x^*\right\|_2.
\end{equation}
Here, $\mathrm{IT}$ denotes the number of iterations required to satisfy the stopping criterion, whereas $\mathrm{CPU}$ denotes the elapsed computing time in seconds measured using the MATLAB commands \texttt{tic} and \texttt{toc}. The cost of the preliminary parameter searches is not included in the reported CPU times. The notation ``Fail'' indicates that the prescribed tolerance was not attained within $500$ iterations.

The parameters of the comparison methods are selected according to the recommendations in their respective source papers whenever applicable. The parameters of FPI-SS and RMNSI are determined from preliminary experiments at the smallest tested problem size and are then kept fixed as the dimension increases. Their values are displayed beneath the corresponding method names in each table. This strategy avoids selecting parameters separately for every problem dimension and also helps assess the sensitivity of the methods to increasing problem size.

All computations were performed in MATLAB R2025b on a system equipped with a 13th Gen Intel(R) Core(TM) i7-13700HX processor and $16$ GB of RAM, running a 64-bit Windows operating system. The numerical results are reported in Tables~\ref{tab:ex1-comparison}--\ref{tab:ex6-comparison}.

To describe the coefficient matrices of the test problems consistently,
we first introduce some notation.For \(\alpha,\beta,\gamma\in\mathbb{R}\), define
\begin{equation*}
	\mathcal{U}_m(\alpha,\beta,\gamma)
	:=
	\operatorname{tridiag}(\alpha,\beta,\gamma)
	=
	\begin{pmatrix}
		\beta  & \gamma &         &         &        \\
		\alpha & \beta  & \gamma  &         &        \\
		& \ddots & \ddots  & \ddots  &        \\
		&        & \alpha  & \beta   & \gamma \\
		&        &         & \alpha  & \beta
	\end{pmatrix}
	\in\mathbb{R}^{m\times m}.
\end{equation*}
For \(\delta,\varepsilon,\mu\in\mathbb{R}\), define
\begin{equation*}
	\mathcal{V}_{m,\mu}
	(\alpha,\beta,\gamma;\delta,\varepsilon)
	:=
	I_m\otimes
	\mathcal{U}_m(\alpha,\beta,\gamma)
	+
	\mathcal{U}_m(\delta,\mu,\varepsilon)
	\otimes I_m.
\end{equation*}
Here, \(I_r\) denotes the identity matrix of order \(r\), and
\(\otimes\) denotes the Kronecker product. Equivalently, setting
\(U=\mathcal{U}_m(\alpha,\beta,\gamma)\), we have
\begin{equation*}
	\mathcal{V}_{m,\mu}
	(\alpha,\beta,\gamma;\delta,\varepsilon)
	=
	\begin{pmatrix}
		U+\mu I_m        & \varepsilon I_m &                 &                 & \\
		\delta I_m       & U+\mu I_m       & \varepsilon I_m &                 & \\
		& \ddots          & \ddots          & \ddots          & \\
		&                 & \delta I_m       & U+\mu I_m       & \varepsilon I_m \\
		&                 &                  & \delta I_m       & U+\mu I_m
	\end{pmatrix}
	\in\mathbb{R}^{n\times n},
	\qquad n=m^2.
\end{equation*}

Several test problems considered below are derived from the linear
complementarity problem
\begin{equation*}
	z\geq0,\qquad
	w=Vz+q\geq0,\qquad
	z^{\mathsf T}w=0.
\end{equation*}
We employ two GAVE reformulations of this problem. For the first one,
introduce
$
z=|x|-x,
~
w=|x|+x.
$
Substitution into \(w=Vz+q\) gives
$
|x|+x=V(|x|-x)+q,
$
which is equivalent to GAVE~\eqref{eq:GAVE} with
\begin{equation}\label{eq:LCP-GAVE-1}
	A=V+I_n,\qquad
	B=V-I_n,\qquad
	b=q.
\end{equation}
If \(z^*\geq0\), \(q=-Vz^*\), and \(w^*=0\), then
\((z^*,w^*)\) satisfies the complementarity conditions, and the
corresponding solution of GAVE~\eqref{eq:GAVE} is
$
x^*=-\frac{1}{2}z^*.
$

For the second reformulation, let \(D\) be a positive diagonal matrix
and introduce the scaled transformation
$
z=|x|+x,
~
w=D(|x|-x).
$
Substitution into \(w=Vz+q\) yields
$
D(|x|-x)=V(|x|+x)+q,
$
and hence GAVE~\eqref{eq:GAVE} is obtained with
\begin{equation}\label{eq:LCP-GAVE-2}
	A=V+D,\qquad
	B=D-V,\qquad
	b=-q.
\end{equation}
In this formulation, if \(z^*\geq0\), \(q=-Vz^*\), and \(w^*=0\),
then the associated exact solution is
$
x^*=\frac{1}{2}z^*.
$

\begin{example}\label{Ex1} (\cite{WuHanLi2024}). Let
	$V=\mathcal{V}_{m,\mu}(-1,4,-1;-1,-1)$ and define
$
		z_i^*=
		\begin{cases}
			1, & i\text{ is odd},\\
			2, & i\text{ is even}.
		\end{cases}
$
	Thus, $z^*=(1,2,1,2,\ldots)^{\mathsf T}$. Setting $q=-Vz^*$ gives
	$w^*=Vz^*+q=0$, so $(z^*,w^*)$ satisfies the complementarity
	conditions.
	
	Using \eqref{eq:LCP-GAVE-1}, the matrices and right-hand side of
	GAVE~\eqref{eq:GAVE} are
	$A=V+I_n$, $B=V-I_n$, and $b=q$. The corresponding exact solution is
$
		x^*=-\frac{1}{2}z^*
		=
		\left(
		-\frac{1}{2},-1,-\frac{1}{2},-1,\ldots
		\right)^{\mathsf T}.
$
\end{example}

\begin{table}[width=\linewidth,cols=13,pos=H]
	\caption{\(\mathrm{Performance~comparison~of~tested~methods~
			for~Example~\ref{Ex1}.}\)}
	\label{tab:ex1-comparison}
	\footnotesize
	\setlength{\tabcolsep}{1.2pt}
	\renewcommand{\arraystretch}{0.9}
	
	\begin{tabular*}{\tblwidth}
		{@{\extracolsep{\fill}}l*{12}{c}@{}}
		\toprule
		
		\(\mathrm{FPM\mbox{-}M}\)
		& \multicolumn{4}{c}{\(\mu=0\)}
		& \multicolumn{4}{c}{\(\mu=2\)}
		& \multicolumn{4}{c}{\(\mu=4\)} \\
		\cmidrule(lr){2-5}
		\cmidrule(lr){6-9}
		\cmidrule(lr){10-13}
		\(m\)
		& \(\mathrm{IT}\) & \(\mathrm{CPU}\) & \(\mathrm{RES}\) & \(\mathrm{ERR}\)
		& \(\mathrm{IT}\) & \(\mathrm{CPU}\) & \(\mathrm{RES}\) & \(\mathrm{ERR}\)
		& \(\mathrm{IT}\) & \(\mathrm{CPU}\) & \(\mathrm{RES}\) & \(\mathrm{ERR}\) \\
		\midrule
		\(10\)
		& \(2\) & \(0.0013\) & \(2.4944\mathrm{e}{-16}\) & \(1.4676\mathrm{e}{-15}\)
		& \(3\) & \(0.0001\) & \(2.8964\mathrm{e}{-16}\) & \(1.4916\mathrm{e}{-15}\)
		& \(3\) & \(0.0001\) & \(3.4991\mathrm{e}{-16}\) & \(1.6690\mathrm{e}{-15}\) \\
		\(50\)
		& \(2\) & \(0.0014\) & \(2.9339\mathrm{e}{-16}\) & \(1.5439\mathrm{e}{-14}\)
		& \(3\) & \(0.001\) & \(3.3284\mathrm{e}{-16}\) & \(7.0724\mathrm{e}{-15}\)
		& \(3\) & \(0.0011\) & \(8.5596\mathrm{e}{-17}\) & \(1.4272\mathrm{e}{-15}\) \\
		\(100\)
		& \(2\) & \(0.0089\) & \(2.7231\mathrm{e}{-16}\) & \(8.0563\mathrm{e}{-14}\)
		& \(3\) & \(0.0059\) & \(3.3851\mathrm{e}{-16}\) & \(1.3792\mathrm{e}{-14}\)
		& \(3\) & \(0.0042\) & \(8.6295\mathrm{e}{-17}\) & \(2.8668\mathrm{e}{-15}\) \\
		\(150\)
		& \(2\) & \(0.0083\) & \(2.7895\mathrm{e}{-16}\) & \(2.0289\mathrm{e}{-13}\)
		& \(3\) & \(0.0126\) & \(3.3394\mathrm{e}{-16}\) & \(2.0881\mathrm{e}{-14}\)
		& \(3\) & \(0.0102\) & \(9.2365\mathrm{e}{-17}\) & \(4.3554\mathrm{e}{-15}\) \\
		\(200\)
		& \(2\) & \(0.0184\) & \(2.9202\mathrm{e}{-16}\) & \(3.8314\mathrm{e}{-13}\)
		& \(3\) & \(0.0268\) & \(3.3638\mathrm{e}{-16}\) & \(2.6930\mathrm{e}{-14}\)
		& \(3\) & \(0.0256\) & \(9.4219\mathrm{e}{-17}\) & \(5.8178\mathrm{e}{-15}\) \\
		\(250\)
		& \(2\) & \(0.0298\) & \(2.7684\mathrm{e}{-16}\) & \(4.8669\mathrm{e}{-13}\)
		& \(3\) & \(0.1101\) & \(3.3123\mathrm{e}{-16}\) & \(3.3335\mathrm{e}{-14}\)
		& \(3\) & \(0.1007\) & \(9.0273\mathrm{e}{-17}\) & \(6.8355\mathrm{e}{-15}\) \\
		
		\midrule
		
		\(\mathrm{M3}\)
		& \multicolumn{4}{c}{\(\mu=0\)}
		& \multicolumn{4}{c}{\(\mu=2\)}
		& \multicolumn{4}{c}{\(\mu=4\)} \\
		\cmidrule(lr){2-5}
		\cmidrule(lr){6-9}
		\cmidrule(lr){10-13}
		\(m\)
		& \(\mathrm{IT}\) & \(\mathrm{CPU}\) & \(\mathrm{RES}\) & \(\mathrm{ERR}\)
		& \(\mathrm{IT}\) & \(\mathrm{CPU}\) & \(\mathrm{RES}\) & \(\mathrm{ERR}\)
		& \(\mathrm{IT}\) & \(\mathrm{CPU}\) & \(\mathrm{RES}\) & \(\mathrm{ERR}\) \\
		\midrule
		\(10\)
		& \(11\) & \(0.0331\) & \(5.1787\mathrm{e}{-16}\) & \(1.1748\mathrm{e}{-14}\)
		& \(2\) & \(0.0002\) & \(4.6765\mathrm{e}{-16}\) & \(2.4248\mathrm{e}{-15}\)
		& \(2\) & \(0.0002\) & \(2.5388\mathrm{e}{-16}\) & \(1.1551\mathrm{e}{-15}\) \\
		\(50\)
		& \(13\) & \(0.0101\) & \(6.8883\mathrm{e}{-16}\) & \(6.7175\mathrm{e}{-13}\)
		& \(2\) & \(0.0029\) & \(5.8913\mathrm{e}{-16}\) & \(1.9171\mathrm{e}{-14}\)
		& \(2\) & \(0.003\) & \(3.6640\mathrm{e}{-16}\) & \(8.0789\mathrm{e}{-15}\) \\
		\(100\)
		& \(13\) & \(0.0813\) & \(5.7980\mathrm{e}{-16}\) & \(1.1729\mathrm{e}{-11}\)
		& \(2\) & \(0.018\) & \(5.0988\mathrm{e}{-16}\) & \(2.5016\mathrm{e}{-14}\)
		& \(2\) & \(0.0185\) & \(3.4029\mathrm{e}{-16}\) & \(1.4450\mathrm{e}{-14}\) \\
		\(150\)
		& \(13\) & \(0.1543\) & \(5.8840\mathrm{e}{-16}\) & \(3.8675\mathrm{e}{-11}\)
		& \(2\) & \(0.0464\) & \(5.2912\mathrm{e}{-16}\) & \(3.8670\mathrm{e}{-14}\)
		& \(2\) & \(0.0428\) & \(3.4352\mathrm{e}{-16}\) & \(2.1834\mathrm{e}{-14}\) \\
		\(200\)
		& \(13\) & \(0.2402\) & \(5.9701\mathrm{e}{-16}\) & \(9.1592\mathrm{e}{-11}\)
		& \(2\) & \(0.094\) & \(5.3154\mathrm{e}{-16}\) & \(4.9273\mathrm{e}{-14}\)
		& \(2\) & \(0.1057\) & \(3.5484\mathrm{e}{-16}\) & \(3.0020\mathrm{e}{-14}\) \\
		\(250\)
		& \(13\) & \(0.4136\) & \(6.0413\mathrm{e}{-16}\) & \(1.7899\mathrm{e}{-10}\)
		& \(2\) & \(0.125\) & \(5.3956\mathrm{e}{-16}\) & \(6.2389\mathrm{e}{-14}\)
		& \(2\) & \(0.1471\) & \(3.5085\mathrm{e}{-16}\) & \(3.7212\mathrm{e}{-14}\) \\
		
		\midrule
		
		\(\mathrm{GGS}\)
		& \multicolumn{4}{c}{\(\mu=0\)}
		& \multicolumn{4}{c}{\(\mu=2\)}
		& \multicolumn{4}{c}{\(\mu=4\)} \\
		\cmidrule(lr){2-5}
		\cmidrule(lr){6-9}
		\cmidrule(lr){10-13}
		\(m\)
		& \(\mathrm{IT}\) & \(\mathrm{CPU}\) & \(\mathrm{RES}\) & \(\mathrm{ERR}\)
		& \(\mathrm{IT}\) & \(\mathrm{CPU}\) & \(\mathrm{RES}\) & \(\mathrm{ERR}\)
		& \(\mathrm{IT}\) & \(\mathrm{CPU}\) & \(\mathrm{RES}\) & \(\mathrm{ERR}\) \\
		\midrule
		\(10\)
		& \(137\) & \(0.0808\) & \(9.2855\mathrm{e}{-07}\) & \(6.2777\mathrm{e}{-05}\)
		& \(18\) & \(0.0101\) & \(5.6951\mathrm{e}{-07}\) & \(5.7401\mathrm{e}{-06}\)
		& \(12\) & \(0.006\) & \(5.8836\mathrm{e}{-07}\) & \(5.2331\mathrm{e}{-06}\) \\
		\(50\)
		& \(\mathrm{Fail}\) & \(--\) & \(--\) & \(--\)
		& \(20\) & \(0.744\) & \(5.4837\mathrm{e}{-07}\) & \(2.9485\mathrm{e}{-05}\)
		& \(13\) & \(0.4603\) & \(4.4917\mathrm{e}{-07}\) & \(2.0404\mathrm{e}{-05}\) \\
		\(100\)
		& \(\mathrm{Fail}\) & \(--\) & \(--\) & \(--\)
		& \(20\) & \(10.964\) & \(6.1059\mathrm{e}{-07}\) & \(6.5252\mathrm{e}{-05}\)
		& \(13\) & \(6.695\) & \(4.8526\mathrm{e}{-07}\) & \(4.3927\mathrm{e}{-05}\) \\
		\(150\)
		& \(\mathrm{Fail}\) & \(--\) & \(--\) & \(--\)
		& \(20\) & \(62.191\) & \(6.3167\mathrm{e}{-07}\) & \(1.0102\mathrm{e}{-04}\)
		& \(13\) & \(41.768\) & \(4.9741\mathrm{e}{-07}\) & \(6.7449\mathrm{e}{-05}\) \\
		\(200\)
		& \(\mathrm{Fail}\) & \(--\) & \(--\) & \(--\)
		& \(20\) & \(520.47\) & \(6.4227\mathrm{e}{-07}\) & \(1.3678\mathrm{e}{-04}\)
		& \(13\) & \(437.18\) & \(5.0350\mathrm{e}{-07}\) & \(9.0970\mathrm{e}{-05}\) \\
		\(250\)
		& \(\mathrm{Fail}\) & \(--\) & \(--\) & \(--\)
		& \(20\) & \(1018.6\) & \(6.4865\mathrm{e}{-07}\) & \(1.7254\mathrm{e}{-04}\)
		& \(13\) & \(587.11\) & \(5.0717\mathrm{e}{-07}\) & \(1.1449\mathrm{e}{-04}\) \\
		
		\midrule
		
		\(\mathrm{FPI\mbox{-}SS}\)
		& \multicolumn{4}{c}{
			\makecell[c]{\(\mu=0\)\\
				\((\alpha=2.3,\omega=0.35)\)}}
		& \multicolumn{4}{c}{
			\makecell[c]{\(\mu=2\)\\
				\((\alpha=12.1,\omega=0.8)\)}}
		& \multicolumn{4}{c}{
			\makecell[c]{\(\mu=4\)\\
				\((\alpha=19.8,\omega=0.9)\)}} \\
		\cmidrule(lr){2-5}
		\cmidrule(lr){6-9}
		\cmidrule(lr){10-13}
		\(m\)
		& \(\mathrm{IT}\) & \(\mathrm{CPU}\) & \(\mathrm{RES}\) & \(\mathrm{ERR}\)
		& \(\mathrm{IT}\) & \(\mathrm{CPU}\) & \(\mathrm{RES}\) & \(\mathrm{ERR}\)
		& \(\mathrm{IT}\) & \(\mathrm{CPU}\) & \(\mathrm{RES}\) & \(\mathrm{ERR}\) \\
		\midrule
		\(10\)
		& \(72\) & \(0.0028\) & \(9.5823\mathrm{e}{-07}\) & \(6.7062\mathrm{e}{-05}\)
		& \(13\) & \(0.0002\) & \(9.6205\mathrm{e}{-07}\) & \(8.2687\mathrm{e}{-06}\)
		& \(11\) & \(0.0001\) & \(8.0047\mathrm{e}{-07}\) & \(6.4454\mathrm{e}{-06}\) \\
		\(50\)
		& \(\mathrm{Fail}\) & \(--\) & \(--\) & \(--\)
		& \(17\) & \(0.005\) & \(7.3254\mathrm{e}{-07}\) & \(3.9646\mathrm{e}{-05}\)
		& \(13\) & \(0.0032\) & \(3.2302\mathrm{e}{-07}\) & \(1.4706\mathrm{e}{-05}\) \\
		\(100\)
		& \(\mathrm{Fail}\) & \(--\) & \(--\) & \(--\)
		& \(17\) & \(0.0206\) & \(8.7202\mathrm{e}{-07}\) & \(9.3480\mathrm{e}{-05}\)
		& \(13\) & \(0.014\) & \(3.8196\mathrm{e}{-07}\) & \(3.4611\mathrm{e}{-05}\) \\
		\(150\)
		& \(\mathrm{Fail}\) & \(--\) & \(--\) & \(--\)
		& \(17\) & \(0.0644\) & \(9.1901\mathrm{e}{-07}\) & \(1.4727\mathrm{e}{-04}\)
		& \(13\) & \(0.0549\) & \(4.0183\mathrm{e}{-07}\) & \(5.4513\mathrm{e}{-05}\) \\
		\(200\)
		& \(\mathrm{Fail}\) & \(--\) & \(--\) & \(--\)
		& \(17\) & \(0.385\) & \(9.4261\mathrm{e}{-07}\) & \(2.0104\mathrm{e}{-04}\)
		& \(13\) & \(0.2717\) & \(4.1180\mathrm{e}{-07}\) & \(7.4416\mathrm{e}{-05}\) \\
		\(250\)
		& \(\mathrm{Fail}\) & \(--\) & \(--\) & \(--\)
		& \(17\) & \(0.2114\) & \(9.5681\mathrm{e}{-07}\) & \(2.5482\mathrm{e}{-04}\)
		& \(13\) & \(0.1735\) & \(4.1780\mathrm{e}{-07}\) & \(9.4318\mathrm{e}{-05}\) \\
		
		\midrule
		
		\(\mathbf{RMNSI}\)
		& \multicolumn{4}{c}{
			\makecell[c]{\(\mu=0\)\\ \((\theta=0.6)\)}}
		& \multicolumn{4}{c}{
			\makecell[c]{\(\mu=2\)\\ \((\theta=1.3)\)}}
		& \multicolumn{4}{c}{
			\makecell[c]{\(\mu=4\)\\ \((\theta=1.5)\)}} \\
		\cmidrule(lr){2-5}
		\cmidrule(lr){6-9}
		\cmidrule(lr){10-13}
		\(m\)
		& \(\mathrm{IT}\) & \(\mathrm{CPU}\) & \(\mathrm{RES}\) & \(\mathrm{ERR}\)
		& \(\mathrm{IT}\) & \(\mathrm{CPU}\) & \(\mathrm{RES}\) & \(\mathrm{ERR}\)
		& \(\mathrm{IT}\) & \(\mathrm{CPU}\) & \(\mathrm{RES}\) & \(\mathrm{ERR}\) \\
		\midrule
		\(10\)
		& \(1\) & \(0.001\) & \(2.4944\mathrm{e}{-16}\) & \(1.4676\mathrm{e}{-15}\)
		& \(1\) & \(<0.0001\) & \(2.8964\mathrm{e}{-16}\) & \(1.4916\mathrm{e}{-15}\)
		& \(1\) & \(<0.0001\) & \(3.4991\mathrm{e}{-16}\) & \(1.6690\mathrm{e}{-15}\) \\
		\(50\)
		& \(1\) & \(0.0011\) & \(2.9339\mathrm{e}{-16}\) & \(1.5439\mathrm{e}{-14}\)
		& \(1\) & \(0.0008\) & \(3.3284\mathrm{e}{-16}\) & \(7.0724\mathrm{e}{-15}\)
		& \(1\) & \(0.0007\) & \(8.5596\mathrm{e}{-17}\) & \(1.4272\mathrm{e}{-15}\) \\
		\(100\)
		& \(1\) & \(0.0042\) & \(2.7231\mathrm{e}{-16}\) & \(8.0563\mathrm{e}{-14}\)
		& \(1\) & \(0.0028\) & \(3.3851\mathrm{e}{-16}\) & \(1.3792\mathrm{e}{-14}\)
		& \(1\) & \(0.0034\) & \(8.6295\mathrm{e}{-17}\) & \(2.8668\mathrm{e}{-15}\) \\
		\(150\)
		& \(1\) & \(0.0081\) & \(2.7895\mathrm{e}{-16}\) & \(2.0289\mathrm{e}{-13}\)
		& \(1\) & \(0.0067\) & \(3.3394\mathrm{e}{-16}\) & \(2.0881\mathrm{e}{-14}\)
		& \(1\) & \(0.0066\) & \(9.2365\mathrm{e}{-17}\) & \(4.3554\mathrm{e}{-15}\) \\
		\(200\)
		& \(1\) & \(0.0183\) & \(2.9202\mathrm{e}{-16}\) & \(3.8314\mathrm{e}{-13}\)
		& \(1\) & \(0.043\) & \(3.3638\mathrm{e}{-16}\) & \(2.6930\mathrm{e}{-14}\)
		& \(1\) & \(0.0161\) & \(9.4219\mathrm{e}{-17}\) & \(5.8178\mathrm{e}{-15}\) \\
		\(250\)
		& \(1\) & \(0.0262\) & \(2.7684\mathrm{e}{-16}\) & \(4.8669\mathrm{e}{-13}\)
		& \(1\) & \(0.0301\) & \(3.3123\mathrm{e}{-16}\) & \(3.3335\mathrm{e}{-14}\)
		& \(1\) & \(0.03\) & \(9.0273\mathrm{e}{-17}\) & \(6.8355\mathrm{e}{-15}\) \\
		
		\bottomrule
	\end{tabular*}
\end{table}

Table~\ref{tab:ex1-comparison} shows that, for
Example~\ref{Ex1}, RMNSI performs consistently for all tested values of
$m$ and $\mu$. Its convergence behaviour is unaffected by the increasing
problem size, and the computed solutions are accurate to nearly machine
precision. FPM-M and M3 also produce accurate solutions but require more
computational work. When $\mu=0$, GGS and FPI-SS fail for the larger
systems, whereas RMNSI remains stable. For positive values of $\mu$, all
methods converge, but GGS becomes considerably more expensive as $m$
increases. Overall, RMNSI provides the best balance of efficiency,
accuracy, and stability for this example.
	
\begin{figure}[width=\linewidth,pos=H]
	\centering
	\includegraphics[width=\linewidth]{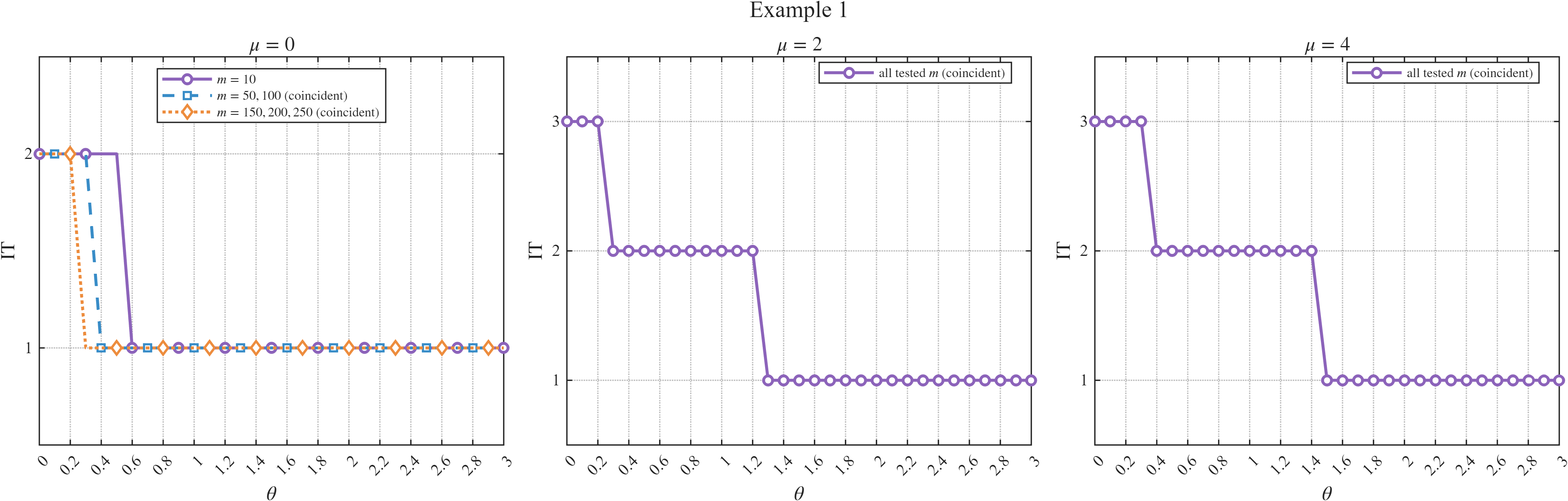}
	\caption{\(\mathrm{Effect~of~the~relaxation~parameter~\theta~on~the~iteration~
			count~of~RMNSI~for~Example~\ref{Ex1}~with~different~values~of~m.}\)}
	\label{fig:theta-ex1}
\end{figure}

Figure~\ref{fig:theta-ex1} illustrates the effect of $\theta$ on RMNSI
for Example~\ref{Ex1}. Positive values of $\mu$ move the best parameter
range towards larger values of $\theta$, while the coincident curves
indicate little dependence on $m$.

\begin{example}\label{Ex2} (\cite{WangMu2025}). Let
$
		V=
		\mathcal{V}_{m,\mu}
		\left(
		-\frac{1}{2},4,-\frac{3}{2};
		-\frac{1}{2},-\frac{3}{2}
		\right).
$
	The matrix $V$ is generally nonsymmetric because its lower and upper
	off-diagonal entries are different. Let $z^*=1.2\boldsymbol{e}$ and
	set $q=-Vz^*$. Then $w^*=0$, and hence $(z^*,w^*)$ satisfies the
	complementarity conditions.
	
	By \eqref{eq:LCP-GAVE-1}, GAVE~\eqref{eq:GAVE} is defined by
	$A=V+I_n$, $B=V-I_n$, and $b=q$. Its exact solution is
	$x^*=-z^*/2=-0.6\boldsymbol{e}$.
\end{example}

\begin{figure}[width=\linewidth,pos=H]
	\centering
	\includegraphics[width=\linewidth]{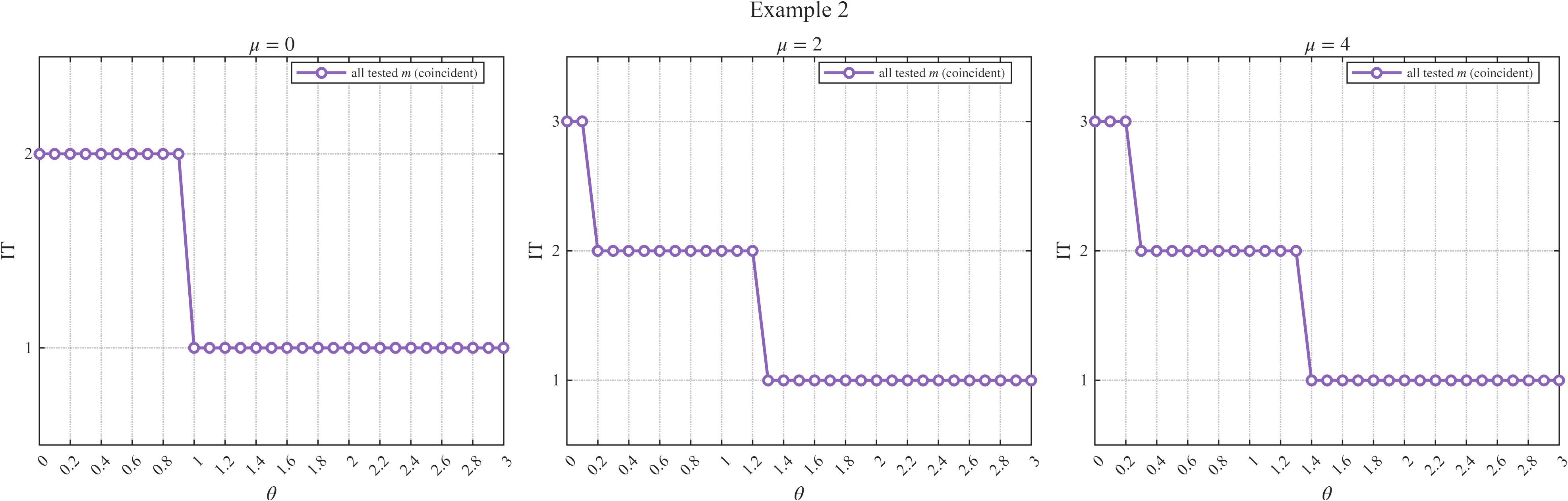}
	\caption{\(\mathrm{Effect~of~the~relaxation~parameter~\theta~on~the~iteration~
			count~of~RMNSI~for~Example~\ref{Ex2}~with~different~values~of~m.}\)}
	\label{fig:theta-ex2}
\end{figure}

\begin{table}[width=\linewidth,cols=13,pos=H]
	\caption{\(\mathrm{Performance~comparison~of~tested~methods~
			for~Example~\ref{Ex2}.}\)}
	\label{tab:ex2-comparison}
	\footnotesize
	\setlength{\tabcolsep}{1.2pt}
	\renewcommand{\arraystretch}{0.9}
	
	\begin{tabular*}{\tblwidth}
		{@{\extracolsep{\fill}}l*{12}{c}@{}}
		\toprule
		
		\(\mathrm{FPM\mbox{-}M}\)
		& \multicolumn{4}{c}{\(\mu=0\)}
		& \multicolumn{4}{c}{\(\mu=2\)}
		& \multicolumn{4}{c}{\(\mu=4\)} \\
		\cmidrule(lr){2-5}
		\cmidrule(lr){6-9}
		\cmidrule(lr){10-13}
		\(m\)
		& \(\mathrm{IT}\) & \(\mathrm{CPU}\) & \(\mathrm{RES}\) & \(\mathrm{ERR}\)
		& \(\mathrm{IT}\) & \(\mathrm{CPU}\) & \(\mathrm{RES}\) & \(\mathrm{ERR}\)
		& \(\mathrm{IT}\) & \(\mathrm{CPU}\) & \(\mathrm{RES}\) & \(\mathrm{ERR}\) \\
		\midrule
		\(10\)
		& \(2\) & \(0.0002\) & \(7.6833\mathrm{e}{-16}\) & \(2.0502\mathrm{e}{-15}\)
		& \(3\) & \(0.0007\) & \(3.3321\mathrm{e}{-16}\) & \(1.1213\mathrm{e}{-15}\)
		& \(3\) & \(<0.0001\) & \(2.0177\mathrm{e}{-16}\) & \(1.3461\mathrm{e}{-15}\) \\
		\(50\)
		& \(2\) & \(0.0008\) & \(1.0553\mathrm{e}{-15}\) & \(6.5390\mathrm{e}{-15}\)
		& \(3\) & \(0.001\) & \(3.1954\mathrm{e}{-16}\) & \(8.2769\mathrm{e}{-15}\)
		& \(3\) & \(0.0011\) & \(2.3034\mathrm{e}{-16}\) & \(8.4018\mathrm{e}{-15}\) \\
		\(100\)
		& \(2\) & \(0.0074\) & \(1.4490\mathrm{e}{-15}\) & \(2.1534\mathrm{e}{-14}\)
		& \(3\) & \(0.0045\) & \(3.3583\mathrm{e}{-16}\) & \(1.6881\mathrm{e}{-14}\)
		& \(3\) & \(0.005\) & \(2.3636\mathrm{e}{-16}\) & \(1.6980\mathrm{e}{-14}\) \\
		\(150\)
		& \(2\) & \(0.0365\) & \(1.8141\mathrm{e}{-15}\) & \(2.2149\mathrm{e}{-14}\)
		& \(3\) & \(0.018\) & \(3.3485\mathrm{e}{-16}\) & \(2.5504\mathrm{e}{-14}\)
		& \(3\) & \(0.01\) & \(2.4456\mathrm{e}{-16}\) & \(2.5775\mathrm{e}{-14}\) \\
		\(200\)
		& \(2\) & \(0.027\) & \(2.1225\mathrm{e}{-15}\) & \(3.0834\mathrm{e}{-14}\)
		& \(3\) & \(0.0239\) & \(3.3605\mathrm{e}{-16}\) & \(3.4051\mathrm{e}{-14}\)
		& \(3\) & \(0.0267\) & \(2.4044\mathrm{e}{-16}\) & \(3.4193\mathrm{e}{-14}\) \\
		\(250\)
		& \(2\) & \(0.0281\) & \(2.3221\mathrm{e}{-15}\) & \(4.8257\mathrm{e}{-14}\)
		& \(3\) & \(0.0405\) & \(3.3890\mathrm{e}{-16}\) & \(4.2272\mathrm{e}{-14}\)
		& \(3\) & \(0.0405\) & \(2.4719\mathrm{e}{-16}\) & \(4.3391\mathrm{e}{-14}\) \\
		
		\midrule
		
		\(\mathrm{M3}\)
		& \multicolumn{4}{c}{\(\mu=0\)}
		& \multicolumn{4}{c}{\(\mu=2\)}
		& \multicolumn{4}{c}{\(\mu=4\)} \\
		\cmidrule(lr){2-5}
		\cmidrule(lr){6-9}
		\cmidrule(lr){10-13}
		\(m\)
		& \(\mathrm{IT}\) & \(\mathrm{CPU}\) & \(\mathrm{RES}\) & \(\mathrm{ERR}\)
		& \(\mathrm{IT}\) & \(\mathrm{CPU}\) & \(\mathrm{RES}\) & \(\mathrm{ERR}\)
		& \(\mathrm{IT}\) & \(\mathrm{CPU}\) & \(\mathrm{RES}\) & \(\mathrm{ERR}\) \\
		\midrule
		\(10\)
		& \(2\) & \(0.0004\) & \(7.6833\mathrm{e}{-16}\) & \(2.0502\mathrm{e}{-15}\)
		& \(2\) & \(0.0004\) & \(3.3321\mathrm{e}{-16}\) & \(1.1213\mathrm{e}{-15}\)
		& \(2\) & \(0.0004\) & \(2.0177\mathrm{e}{-16}\) & \(1.3461\mathrm{e}{-15}\) \\
		\(50\)
		& \(2\) & \(0.0087\) & \(1.0553\mathrm{e}{-15}\) & \(6.5390\mathrm{e}{-15}\)
		& \(2\) & \(0.0109\) & \(3.1954\mathrm{e}{-16}\) & \(8.2769\mathrm{e}{-15}\)
		& \(2\) & \(0.009\) & \(2.3034\mathrm{e}{-16}\) & \(8.4018\mathrm{e}{-15}\) \\
		\(100\)
		& \(2\) & \(0.0398\) & \(1.4490\mathrm{e}{-15}\) & \(2.1534\mathrm{e}{-14}\)
		& \(2\) & \(0.04\) & \(3.3583\mathrm{e}{-16}\) & \(1.6881\mathrm{e}{-14}\)
		& \(2\) & \(0.0368\) & \(2.3636\mathrm{e}{-16}\) & \(1.6980\mathrm{e}{-14}\) \\
		\(150\)
		& \(2\) & \(0.0974\) & \(1.8141\mathrm{e}{-15}\) & \(2.2149\mathrm{e}{-14}\)
		& \(2\) & \(0.0917\) & \(3.3485\mathrm{e}{-16}\) & \(2.5504\mathrm{e}{-14}\)
		& \(2\) & \(0.0914\) & \(2.4456\mathrm{e}{-16}\) & \(2.5775\mathrm{e}{-14}\) \\
		\(200\)
		& \(2\) & \(0.1708\) & \(2.1225\mathrm{e}{-15}\) & \(3.0834\mathrm{e}{-14}\)
		& \(2\) & \(0.2348\) & \(3.3605\mathrm{e}{-16}\) & \(3.4051\mathrm{e}{-14}\)
		& \(2\) & \(0.2185\) & \(2.4044\mathrm{e}{-16}\) & \(3.4193\mathrm{e}{-14}\) \\
		\(250\)
		& \(2\) & \(0.3243\) & \(2.3221\mathrm{e}{-15}\) & \(4.8257\mathrm{e}{-14}\)
		& \(2\) & \(0.2899\) & \(3.3890\mathrm{e}{-16}\) & \(4.2272\mathrm{e}{-14}\)
		& \(2\) & \(0.317\) & \(2.4719\mathrm{e}{-16}\) & \(4.3391\mathrm{e}{-14}\) \\
		
		\midrule
		
		\(\mathrm{GGS}\)
		& \multicolumn{4}{c}{\(\mu=0\)}
		& \multicolumn{4}{c}{\(\mu=2\)}
		& \multicolumn{4}{c}{\(\mu=4\)} \\
		\cmidrule(lr){2-5}
		\cmidrule(lr){6-9}
		\cmidrule(lr){10-13}
		\(m\)
		& \(\mathrm{IT}\) & \(\mathrm{CPU}\) & \(\mathrm{RES}\) & \(\mathrm{ERR}\)
		& \(\mathrm{IT}\) & \(\mathrm{CPU}\) & \(\mathrm{RES}\) & \(\mathrm{ERR}\)
		& \(\mathrm{IT}\) & \(\mathrm{CPU}\) & \(\mathrm{RES}\) & \(\mathrm{ERR}\) \\
		\midrule
		\(10\)
		& \(55\) & \(0.0297\) & \(8.0964\mathrm{e}{-07}\) & \(3.6496\mathrm{e}{-06}\)
		& \(21\) & \(0.0124\) & \(8.7252\mathrm{e}{-07}\) & \(3.9212\mathrm{e}{-06}\)
		& \(15\) & \(0.0086\) & \(6.0533\mathrm{e}{-07}\) & \(3.3143\mathrm{e}{-06}\) \\
		\(50\)
		& \(201\) & \(41.024\) & \(9.1972\mathrm{e}{-07}\) & \(1.4129\mathrm{e}{-05}\)
		& \(27\) & \(1.031\) & \(7.3293\mathrm{e}{-07}\) & \(2.2636\mathrm{e}{-05}\)
		& \(17\) & \(0.5858\) & \(4.4997\mathrm{e}{-07}\) & \(1.3659\mathrm{e}{-05}\) \\
		\(100\)
		& \(368\) & \(1031.3\) & \(9.3012\mathrm{e}{-07}\) & \(2.5205\mathrm{e}{-05}\)
		& \(27\) & \(13.808\) & \(8.7454\mathrm{e}{-07}\) & \(5.3352\mathrm{e}{-05}\)
		& \(17\) & \(8.5943\) & \(5.0198\mathrm{e}{-07}\) & \(3.0316\mathrm{e}{-05}\) \\
		\(150\)
		& \(\mathrm{Fail}\) & \(--\) & \(--\) & \(--\)
		& \(27\) & \(80.803\) & \(9.2332\mathrm{e}{-07}\) & \(8.4059\mathrm{e}{-05}\)
		& \(17\) & \(51.21\) & \(5.1957\mathrm{e}{-07}\) & \(4.6971\mathrm{e}{-05}\) \\
		\(200\)
		& \(\mathrm{Fail}\) & \(--\) & \(--\) & \(--\)
		& \(27\) & \(278.62\) & \(9.4804\mathrm{e}{-07}\) & \(1.1477\mathrm{e}{-04}\)
		& \(17\) & \(179.25\) & \(5.2841\mathrm{e}{-07}\) & \(6.3625\mathrm{e}{-05}\) \\
		\(250\)
		& \(\mathrm{Fail}\) & \(--\) & \(--\) & \(--\)
		& \(27\) & \(640.7\) & \(9.6297\mathrm{e}{-07}\) & \(1.4547\mathrm{e}{-04}\)
		& \(17\) & \(406.33\) & \(5.3373\mathrm{e}{-07}\) & \(8.0280\mathrm{e}{-05}\) \\
		
		\midrule
		
		\(\mathrm{FPI\mbox{-}SS}\)
		& \multicolumn{4}{c}{
			\makecell[c]{\(\mu=0\)\\
				\((\alpha=4,\omega=0.55)\)}}
		& \multicolumn{4}{c}{
			\makecell[c]{\(\mu=2\)\\
				\((\alpha=11.6,\omega=0.85)\)}}
		& \multicolumn{4}{c}{
			\makecell[c]{\(\mu=4\)\\
				\((\alpha=17.9,\omega=0.95)\)}} \\
		\cmidrule(lr){2-5}
		\cmidrule(lr){6-9}
		\cmidrule(lr){10-13}
		\(m\)
		& \(\mathrm{IT}\) & \(\mathrm{CPU}\) & \(\mathrm{RES}\) & \(\mathrm{ERR}\)
		& \(\mathrm{IT}\) & \(\mathrm{CPU}\) & \(\mathrm{RES}\) & \(\mathrm{ERR}\)
		& \(\mathrm{IT}\) & \(\mathrm{CPU}\) & \(\mathrm{RES}\) & \(\mathrm{ERR}\) \\
		\midrule
		\(10\)
		& \(36\) & \(0.0006\) & \(9.5939\mathrm{e}{-07}\) & \(6.2695\mathrm{e}{-06}\)
		& \(16\) & \(0.0003\) & \(6.2858\mathrm{e}{-07}\) & \(2.6560\mathrm{e}{-06}\)
		& \(12\) & \(0.0002\) & \(7.7670\mathrm{e}{-07}\) & \(2.2928\mathrm{e}{-06}\) \\
		\(50\)
		& \(125\) & \(0.1558\) & \(7.7662\mathrm{e}{-07}\) & \(1.7607\mathrm{e}{-05}\)
		& \(18\) & \(0.0046\) & \(4.3208\mathrm{e}{-07}\) & \(1.3281\mathrm{e}{-05}\)
		& \(13\) & \(0.003\) & \(2.6535\mathrm{e}{-07}\) & \(7.8371\mathrm{e}{-06}\) \\
		\(100\)
		& \(221\) & \(1.0459\) & \(9.9758\mathrm{e}{-07}\) & \(4.0063\mathrm{e}{-05}\)
		& \(18\) & \(0.0249\) & \(5.3319\mathrm{e}{-07}\) & \(3.2498\mathrm{e}{-05}\)
		& \(13\) & \(0.0175\) & \(2.9726\mathrm{e}{-07}\) & \(1.7864\mathrm{e}{-05}\) \\
		\(150\)
		& \(315\) & \(1.1245\) & \(8.9020\mathrm{e}{-07}\) & \(5.0474\mathrm{e}{-05}\)
		& \(18\) & \(0.0657\) & \(5.6879\mathrm{e}{-07}\) & \(5.1763\mathrm{e}{-05}\)
		& \(13\) & \(0.0485\) & \(3.0926\mathrm{e}{-07}\) & \(2.7908\mathrm{e}{-05}\) \\
		\(200\)
		& \(406\) & \(3.0748\) & \(9.8758\mathrm{e}{-07}\) & \(7.2456\mathrm{e}{-05}\)
		& \(18\) & \(0.15\) & \(5.8693\mathrm{e}{-07}\) & \(7.1037\mathrm{e}{-05}\)
		& \(13\) & \(0.0956\) & \(3.1549\mathrm{e}{-07}\) & \(3.7955\mathrm{e}{-05}\) \\
		\(250\)
		& \(497\) & \(6.5506\) & \(9.1634\mathrm{e}{-07}\) & \(8.1943\mathrm{e}{-05}\)
		& \(18\) & \(0.2333\) & \(5.9793\mathrm{e}{-07}\) & \(9.0314\mathrm{e}{-05}\)
		& \(13\) & \(0.1618\) & \(3.1930\mathrm{e}{-07}\) & \(4.8004\mathrm{e}{-05}\) \\
		
		\midrule
		
		\(\mathbf{RMNSI}\)
		& \multicolumn{4}{c}{
			\makecell[c]{\(\mu=0\)\\ \((\theta=1)\)}}
		& \multicolumn{4}{c}{
			\makecell[c]{\(\mu=2\)\\ \((\theta=1.3)\)}}
		& \multicolumn{4}{c}{
			\makecell[c]{\(\mu=4\)\\ \((\theta=1.4)\)}} \\
		\cmidrule(lr){2-5}
		\cmidrule(lr){6-9}
		\cmidrule(lr){10-13}
		\(m\)
		& \(\mathrm{IT}\) & \(\mathrm{CPU}\) & \(\mathrm{RES}\) & \(\mathrm{ERR}\)
		& \(\mathrm{IT}\) & \(\mathrm{CPU}\) & \(\mathrm{RES}\) & \(\mathrm{ERR}\)
		& \(\mathrm{IT}\) & \(\mathrm{CPU}\) & \(\mathrm{RES}\) & \(\mathrm{ERR}\) \\
		\midrule
		\(10\)
		& \(1\) & \(<0.0001\) & \(7.6833\mathrm{e}{-16}\) & \(2.0502\mathrm{e}{-15}\)
		& \(1\) & \(<0.0001\) & \(3.3321\mathrm{e}{-16}\) & \(1.1213\mathrm{e}{-15}\)
		& \(1\) & \(<0.0001\) & \(2.0177\mathrm{e}{-16}\) & \(1.3461\mathrm{e}{-15}\) \\
		\(50\)
		& \(1\) & \(0.0016\) & \(1.0553\mathrm{e}{-15}\) & \(6.5390\mathrm{e}{-15}\)
		& \(1\) & \(0.0008\) & \(3.1954\mathrm{e}{-16}\) & \(8.2769\mathrm{e}{-15}\)
		& \(1\) & \(0.0007\) & \(2.3034\mathrm{e}{-16}\) & \(8.4018\mathrm{e}{-15}\) \\
		\(100\)
		& \(1\) & \(0.0068\) & \(1.4490\mathrm{e}{-15}\) & \(2.1534\mathrm{e}{-14}\)
		& \(1\) & \(0.0032\) & \(3.3583\mathrm{e}{-16}\) & \(1.6881\mathrm{e}{-14}\)
		& \(1\) & \(0.0032\) & \(2.3636\mathrm{e}{-16}\) & \(1.6980\mathrm{e}{-14}\) \\
		\(150\)
		& \(1\) & \(0.007\) & \(1.8141\mathrm{e}{-15}\) & \(2.2149\mathrm{e}{-14}\)
		& \(1\) & \(0.0072\) & \(3.3485\mathrm{e}{-16}\) & \(2.5504\mathrm{e}{-14}\)
		& \(1\) & \(0.0061\) & \(2.4456\mathrm{e}{-16}\) & \(2.5775\mathrm{e}{-14}\) \\
		\(200\)
		& \(1\) & \(0.0179\) & \(2.1225\mathrm{e}{-15}\) & \(3.0834\mathrm{e}{-14}\)
		& \(1\) & \(0.0161\) & \(3.3605\mathrm{e}{-16}\) & \(3.4051\mathrm{e}{-14}\)
		& \(1\) & \(0.0157\) & \(2.4044\mathrm{e}{-16}\) & \(3.4193\mathrm{e}{-14}\) \\
		\(250\)
		& \(1\) & \(0.0233\) & \(2.3221\mathrm{e}{-15}\) & \(4.8257\mathrm{e}{-14}\)
		& \(1\) & \(0.0267\) & \(3.3890\mathrm{e}{-16}\) & \(4.2272\mathrm{e}{-14}\)
		& \(1\) & \(0.0265\) & \(2.4719\mathrm{e}{-16}\) & \(4.3391\mathrm{e}{-14}\) \\
		
		\bottomrule
	\end{tabular*}
\end{table}

The results for Example~\ref{Ex2} are reported in
Table~\ref{tab:ex2-comparison}. For $\mu=0$, the underlying matrix $V$
is less diagonally dominant. Consequently, GGS fails for larger problem
sizes, while FPI-SS requires increasing computational effort as $m$
grows. Positive values of $\mu$ strengthen the diagonal dominance and
improve the performance of both methods. In contrast, RMNSI performs
consistently for all tested values of $m$ and $\mu$ and produces highly
accurate solutions. It also gives better overall efficiency than FPM-M
and M3.

For Example~\ref{Ex2}, Figure~\ref{fig:theta-ex2} presents a stepwise
decrease in the iteration count as $\theta$ increases. Although the
transition points vary with $\mu$, their independence from $m$ is
confirmed by the complete overlap of the curves.

\begin{example}\label{Ex3} (\cite{WangMu2025}). Let $H_n=(h_{ij})\in\mathbb{R}^{n\times n}$ be
	the Hilbert matrix defined by
	$h_{ij}=1/(i+j-1)$, $1\leq i,j\leq n$. In
	GAVE~\eqref{eq:GAVE}, take
	\begin{equation*}
		A=H_n+2I_n,\qquad B=I_n.
	\end{equation*}
	The prescribed exact solution is
	$x_i^*=(-1)^i$, $i=1,2,\ldots,n$, that is,
$
		x^*=(-1,1,-1,1,\ldots)^{\mathsf T}.
	$
	Since $|x^*|=\boldsymbol{e}$, the right-hand side is constructed as
	$
		b=Ax^*-B|x^*|
		=(H_n+2I_n)x^*-\boldsymbol{e}.
$
\end{example}

\begin{table}[width=\linewidth,cols=8,pos=H]
	\caption{\(\mathrm{Performance~comparison~of~tested~methods~for~Example~\ref{Ex3}.}\)}
	\label{tab:ex3-comparison}
	\footnotesize
	\setlength{\tabcolsep}{2pt}
	\renewcommand{\arraystretch}{0.85}
	\begin{tabular*}{\tblwidth}
		{@{\extracolsep{\fill}}llcccccc@{}}
		\toprule
		\(\mathrm{Method}\) &
		& \(n=5000\) & \(n=10000\) & \(n=20000\)
		& \(n=30000\) & \(n=40000\) & \(n=50000\) \\
		\midrule
		
		\multirow{4}{*}{\(\mathrm{FPM\mbox{-}M}\)}
		& \(\mathrm{IT}\) & \(21\) & \(21\) & \(21\) & \(21\) & \(21\) & \(21\) \\
		& \(\mathrm{CPU}\) & \(0.097\) & \(0.172\) & \(0.4525\) & \(0.9011\) & \(0.9171\) & \(1.4379\) \\
		& \(\mathrm{RES}\) & \(7.2904\mathrm{e}{-07}\) & \(7.2781\mathrm{e}{-07}\) & \(7.2721\mathrm{e}{-07}\) & \(7.2705\mathrm{e}{-07}\) & \(7.2699\mathrm{e}{-07}\) & \(7.2697\mathrm{e}{-07}\) \\
		& \(\mathrm{ERR}\) & \(9.1803\mathrm{e}{-05}\) & \(1.2954\mathrm{e}{-04}\) & \(1.8305\mathrm{e}{-04}\) & \(2.2415\mathrm{e}{-04}\) & \(2.5882\mathrm{e}{-04}\) & \(2.8937\mathrm{e}{-04}\) \\
		\addlinespace
		
		\multirow{4}{*}{\(\mathrm{M3}\)}
		& \(\mathrm{IT}\) & \(50\) & \(50\) & \(50\) & \(50\) & \(50\) & \(50\) \\
		& \(\mathrm{CPU}\) & \(0.2384\) & \(0.3784\) & \(0.8362\) & \(1.7407\) & \(1.7497\) & \(3.1645\) \\
		& \(\mathrm{RES}\) & \(9.2240\mathrm{e}{-07}\) & \(9.2254\mathrm{e}{-07}\) & \(9.2264\mathrm{e}{-07}\) & \(9.2266\mathrm{e}{-07}\) & \(9.2267\mathrm{e}{-07}\) & \(9.2267\mathrm{e}{-07}\) \\
		& \(\mathrm{ERR}\) & \(1.3791\mathrm{e}{-04}\) & \(1.9507\mathrm{e}{-04}\) & \(2.7588\mathrm{e}{-04}\) & \(3.3788\mathrm{e}{-04}\) & \(3.9015\mathrm{e}{-04}\) & \(4.3620\mathrm{e}{-04}\) \\
		\addlinespace
		
		\multirow{4}{*}{\(\mathrm{GGS}\)}
		& \(\mathrm{IT}\) & \(13\) & \(13\) & \(13\) & \(13\) & \(13\) & \(14\) \\
		& \(\mathrm{CPU}\) & \(0.9537\) & \(3.5852\) & \(14.046\) & \(31.931\) & \(56.737\) & \(96.186\) \\
		& \(\mathrm{RES}\) & \(4.8728\mathrm{e}{-07}\) & \(6.7027\mathrm{e}{-07}\) & \(8.1927\mathrm{e}{-07}\) & \(8.9590\mathrm{e}{-07}\) & \(9.5593\mathrm{e}{-07}\) & \(4.1080\mathrm{e}{-07}\) \\
		& \(\mathrm{ERR}\) & \(2.9283\mathrm{e}{-05}\) & \(5.9370\mathrm{e}{-05}\) & \(1.1011\mathrm{e}{-04}\) & \(1.5387\mathrm{e}{-04}\) & \(1.9397\mathrm{e}{-04}\) & \(8.0281\mathrm{e}{-05}\) \\
		\addlinespace
		
		\multirow{4}{*}{\makecell[l]{
				\(\mathrm{FPI\mbox{-}SS}\)\\
				\((\alpha=2.9,\omega=1)\)}}
		& \(\mathrm{IT}\) & \(17\) & \(17\) & \(17\) & \(17\) & \(17\) & \(17\) \\
		& \(\mathrm{CPU}\) & \(0.0882\) & \(0.1581\) & \(0.4496\) & \(0.8674\) & \(0.8973\) & \(1.4431\) \\
		& \(\mathrm{RES}\) & \(7.9965\mathrm{e}{-07}\) & \(7.9889\mathrm{e}{-07}\) & \(7.9918\mathrm{e}{-07}\) & \(7.9932\mathrm{e}{-07}\) & \(7.9943\mathrm{e}{-07}\) & \(7.9949\mathrm{e}{-07}\) \\
		& \(\mathrm{ERR}\) & \(1.0603\mathrm{e}{-04}\) & \(1.4982\mathrm{e}{-04}\) & \(2.1198\mathrm{e}{-04}\) & \(2.5967\mathrm{e}{-04}\) & \(2.9987\mathrm{e}{-04}\) & \(3.3529\mathrm{e}{-04}\) \\
		\addlinespace
		
	\multirow{4}{*}{\makecell[l]{
			\(\mathbf{RMNSI}\)\\
			\((\theta=2.3)\)}}
	& \(\mathrm{IT}\) & \(5\) & \(5\) & \(6\) & \(6\) & \(6\) & \(6\) \\
	& \(\mathrm{CPU}\) & \(0.0343\) & \(0.0644\) & \(0.2264\) & \(0.4207\) & \(0.4406\) & \(0.7131\) \\
	& \(\mathrm{RES}\) & \(8.9865\mathrm{e}{-07}\) & \(9.5339\mathrm{e}{-07}\) & \(9.3841\mathrm{e}{-08}\) & \(9.5224\mathrm{e}{-08}\) & \(9.6106\mathrm{e}{-08}\) & \(9.6753\mathrm{e}{-08}\) \\
	& \(\mathrm{ERR}\) & \(1.1315\mathrm{e}{-04}\) & \(1.6564\mathrm{e}{-04}\) & \(2.5470\mathrm{e}{-05}\) & \(3.1360\mathrm{e}{-05}\) & \(3.6318\mathrm{e}{-05}\) & \(4.0690\mathrm{e}{-05}\) \\
		
		\bottomrule
	\end{tabular*}
\end{table}

Table~\ref{tab:ex3-comparison} presents the results for the dense
mixed-sign problem in Example~\ref{Ex3}. All the tested methods reach the
prescribed tolerance, but RMNSI uses the fewest updates and the shortest
running time throughout. GGS produces smaller errors, although its cost
increases sharply with the problem dimension. FPM-M and FPI-SS show
similar efficiency, whereas M3 is comparatively slower. Thus, RMNSI
offers the most favourable balance between speed and accuracy for this
example.

\begin{figure}[width=\linewidth,pos=H]
	\centering
	\includegraphics[width=0.5\linewidth]{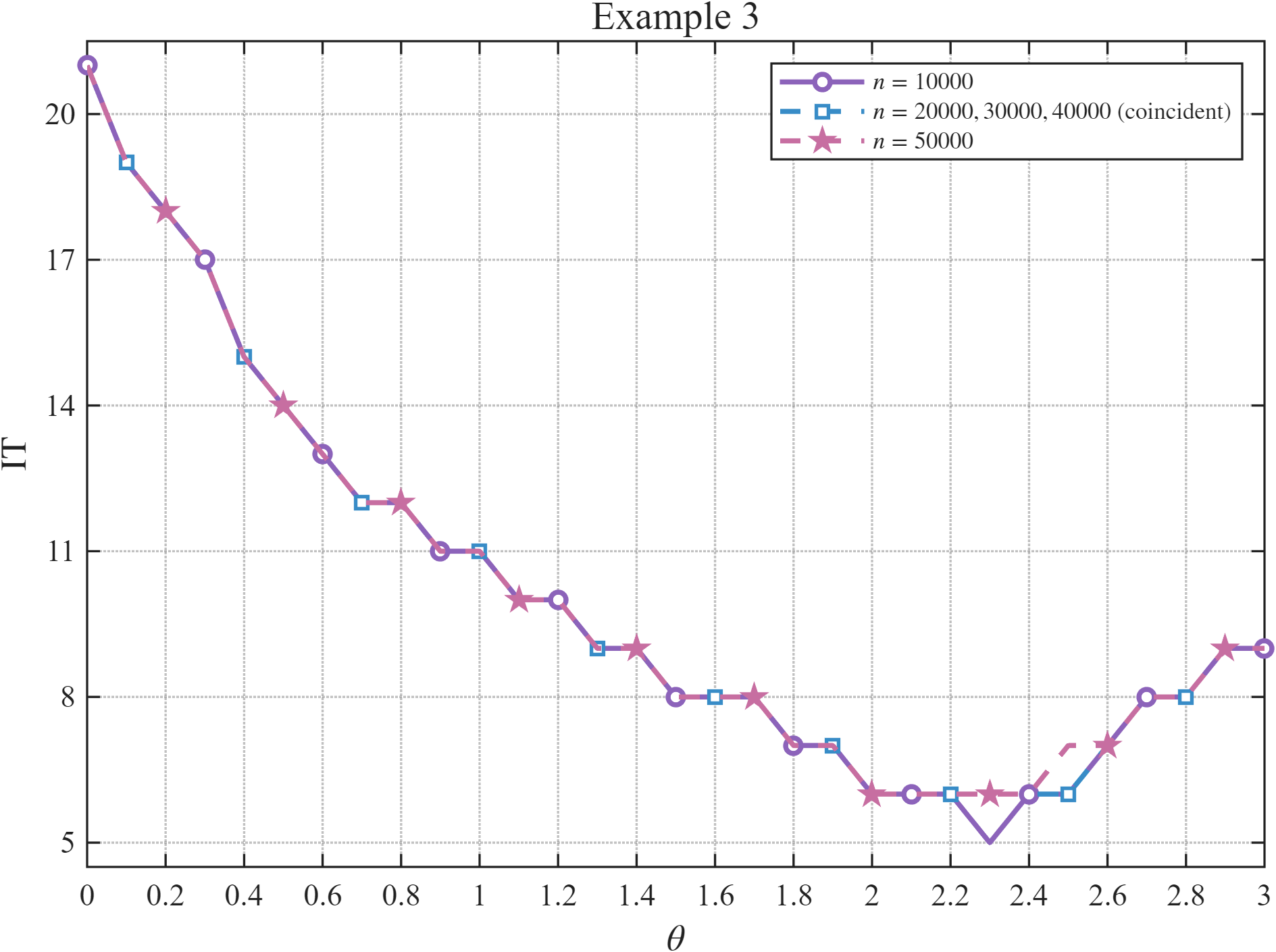}
	\caption{\(\mathrm{Effect~of~the~relaxation~parameter~\theta~on~the~iteration~
			count~of~RMNSI~for~Example~\ref{Ex3}~with~different~values~of~n.}\)}
	\label{fig:theta-ex3}
\end{figure}

Figure~\ref{fig:theta-ex3} traces the parameter sensitivity of RMNSI for
the dense mixed-sign problem in Example~\ref{Ex3}. The nearly identical
curves decrease to a common low-iteration region and then rise, showing
that intermediate values of $\theta$ are preferable for every tested
dimension.

\begin{example}\label{Ex4} (\cite{LuoEtAl2026}). Let
	$
		V=\mathcal{V}_{m,\mu}(-1,4,-1;-1,-1),
	~
		D=0.8\operatorname{diag}(V),
$
	and define
	$
		z_i^*=
		\begin{cases}
			1, & i\text{ is odd},\\
			2, & i\text{ is even}.
		\end{cases}
$
	Setting $q=-Vz^*$ gives $w^*=0$, and therefore $(z^*,w^*)$
	satisfies the complementarity conditions.
	
	Using the scaled formulation \eqref{eq:LCP-GAVE-2},
	GAVE~\eqref{eq:GAVE} is defined by
	$A=V+D$, $B=D-V$, and $b=-q$. The associated exact solution is
$
		x^*=\frac{1}{2}z^*
		=
		\left(
		\frac{1}{2},1,\frac{1}{2},1,\ldots
		\right)^{\mathsf T}.
$
\end{example}

\begin{table}[width=\linewidth,cols=13,pos=H]
	\caption{\(\mathrm{Performance~comparison~of~tested~methods~
			for~Example~\ref{Ex4}.}\)}
	\label{tab:ex4-comparison}
	\footnotesize
	\setlength{\tabcolsep}{1.2pt}
	\renewcommand{\arraystretch}{0.9}
	
	\begin{tabular*}{\tblwidth}
		{@{\extracolsep{\fill}}l*{12}{c}@{}}
		\toprule
		
		\(\mathrm{FPM\mbox{-}M}\)
		& \multicolumn{4}{c}{\(\mu=0\)}
		& \multicolumn{4}{c}{\(\mu=2\)}
		& \multicolumn{4}{c}{\(\mu=4\)} \\
		\cmidrule(lr){2-5}
		\cmidrule(lr){6-9}
		\cmidrule(lr){10-13}
		\(m\)
		& \(\mathrm{IT}\) & \(\mathrm{CPU}\) & \(\mathrm{RES}\) & \(\mathrm{ERR}\)
		& \(\mathrm{IT}\) & \(\mathrm{CPU}\) & \(\mathrm{RES}\) & \(\mathrm{ERR}\)
		& \(\mathrm{IT}\) & \(\mathrm{CPU}\) & \(\mathrm{RES}\) & \(\mathrm{ERR}\) \\
		\midrule
		\(10\)
		& \(\mathrm{Fail}\) & \(--\) & \(--\) & \(--\)
		& \(\mathrm{Fail}\) & \(--\) & \(--\) & \(--\)
		& \(50\) & \(0.0006\) & \(8.7147\mathrm{e}{-07}\) & \(2.9180\mathrm{e}{-06}\) \\
		\(50\)
		& \(\mathrm{Fail}\) & \(--\) & \(--\) & \(--\)
		& \(\mathrm{Fail}\) & \(--\) & \(--\) & \(--\)
		& \(58\) & \(0.0039\) & \(8.7355\mathrm{e}{-07}\) & \(1.3417\mathrm{e}{-05}\) \\
		\(100\)
		& \(\mathrm{Fail}\) & \(--\) & \(--\) & \(--\)
		& \(\mathrm{Fail}\) & \(--\) & \(--\) & \(--\)
		& \(56\) & \(0.0233\) & \(8.8271\mathrm{e}{-07}\) & \(2.6904\mathrm{e}{-05}\) \\
		\(150\)
		& \(\mathrm{Fail}\) & \(--\) & \(--\) & \(--\)
		& \(\mathrm{Fail}\) & \(--\) & \(--\) & \(--\)
		& \(54\) & \(0.0749\) & \(9.8685\mathrm{e}{-07}\) & \(4.5007\mathrm{e}{-05}\) \\
		\(200\)
		& \(\mathrm{Fail}\) & \(--\) & \(--\) & \(--\)
		& \(\mathrm{Fail}\) & \(--\) & \(--\) & \(--\)
		& \(53\) & \(0.1304\) & \(9.9999\mathrm{e}{-07}\) & \(6.0734\mathrm{e}{-05}\) \\
		\(250\)
		& \(\mathrm{Fail}\) & \(--\) & \(--\) & \(--\)
		& \(\mathrm{Fail}\) & \(--\) & \(--\) & \(--\)
		& \(53\) & \(0.1923\) & \(8.9931\mathrm{e}{-07}\) & \(6.8219\mathrm{e}{-05}\) \\
		
		\midrule
		
		\(\mathrm{M3}\)
		& \multicolumn{4}{c}{\(\mu=0\)}
		& \multicolumn{4}{c}{\(\mu=2\)}
		& \multicolumn{4}{c}{\(\mu=4\)} \\
		\cmidrule(lr){2-5}
		\cmidrule(lr){6-9}
		\cmidrule(lr){10-13}
		\(m\)
		& \(\mathrm{IT}\) & \(\mathrm{CPU}\) & \(\mathrm{RES}\) & \(\mathrm{ERR}\)
		& \(\mathrm{IT}\) & \(\mathrm{CPU}\) & \(\mathrm{RES}\) & \(\mathrm{ERR}\)
		& \(\mathrm{IT}\) & \(\mathrm{CPU}\) & \(\mathrm{RES}\) & \(\mathrm{ERR}\) \\
		\midrule
		\(10\)
		& \(17\) & \(0.0006\) & \(5.3370\mathrm{e}{-16}\) & \(1.1748\mathrm{e}{-14}\)
		& \(3\) & \(0.0007\) & \(4.8514\mathrm{e}{-16}\) & \(2.4248\mathrm{e}{-15}\)
		& \(2\) & \(0.0002\) & \(2.3850\mathrm{e}{-16}\) & \(1.1551\mathrm{e}{-15}\) \\
		\(50\)
		& \(67\) & \(0.0123\) & \(7.0442\mathrm{e}{-16}\) & \(6.7175\mathrm{e}{-13}\)
		& \(3\) & \(0.0026\) & \(6.4485\mathrm{e}{-16}\) & \(1.9171\mathrm{e}{-14}\)
		& \(2\) & \(0.0025\) & \(3.4687\mathrm{e}{-16}\) & \(8.0789\mathrm{e}{-15}\) \\
		\(100\)
		& \(138\) & \(0.1109\) & \(6.0990\mathrm{e}{-16}\) & \(1.1729\mathrm{e}{-11}\)
		& \(3\) & \(0.0173\) & \(5.0055\mathrm{e}{-16}\) & \(2.5016\mathrm{e}{-14}\)
		& \(2\) & \(0.0164\) & \(3.1635\mathrm{e}{-16}\) & \(1.4450\mathrm{e}{-14}\) \\
		\(150\)
		& \(296\) & \(0.4774\) & \(6.2048\mathrm{e}{-16}\) & \(3.8675\mathrm{e}{-11}\)
		& \(3\) & \(0.0394\) & \(5.2303\mathrm{e}{-16}\) & \(3.8670\mathrm{e}{-14}\)
		& \(2\) & \(0.0416\) & \(3.2099\mathrm{e}{-16}\) & \(2.1834\mathrm{e}{-14}\) \\
		\(200\)
		& \(\mathrm{Fail}\) & \(--\) & \(--\) & \(--\)
		& \(3\) & \(0.073\) & \(5.1646\mathrm{e}{-16}\) & \(4.9273\mathrm{e}{-14}\)
		& \(2\) & \(0.0699\) & \(3.2976\mathrm{e}{-16}\) & \(3.0020\mathrm{e}{-14}\) \\
		\(250\)
		& \(\mathrm{Fail}\) & \(--\) & \(--\) & \(--\)
		& \(3\) & \(0.1331\) & \(5.2764\mathrm{e}{-16}\) & \(6.2389\mathrm{e}{-14}\)
		& \(2\) & \(0.1165\) & \(3.2770\mathrm{e}{-16}\) & \(3.7212\mathrm{e}{-14}\) \\
		
		\midrule
		
		\(\mathrm{GGS}\)
		& \multicolumn{4}{c}{\(\mu=0\)}
		& \multicolumn{4}{c}{\(\mu=2\)}
		& \multicolumn{4}{c}{\(\mu=4\)} \\
		\cmidrule(lr){2-5}
		\cmidrule(lr){6-9}
		\cmidrule(lr){10-13}
		\(m\)
		& \(\mathrm{IT}\) & \(\mathrm{CPU}\) & \(\mathrm{RES}\) & \(\mathrm{ERR}\)
		& \(\mathrm{IT}\) & \(\mathrm{CPU}\) & \(\mathrm{RES}\) & \(\mathrm{ERR}\)
		& \(\mathrm{IT}\) & \(\mathrm{CPU}\) & \(\mathrm{RES}\) & \(\mathrm{ERR}\) \\
		\midrule
		\(10\)
		& \(126\) & \(0.1818\) & \(9.9963\mathrm{e}{-07}\) & \(6.7583\mathrm{e}{-05}\)
		& \(16\) & \(0.0419\) & \(8.9024\mathrm{e}{-07}\) & \(9.0553\mathrm{e}{-06}\)
		& \(11\) & \(0.0072\) & \(6.1225\mathrm{e}{-07}\) & \(5.4570\mathrm{e}{-06}\) \\
		\(50\)
		& \(\mathrm{Fail}\) & \(--\) & \(--\) & \(--\)
		& \(18\) & \(2.6456\) & \(7.3903\mathrm{e}{-07}\) & \(3.9734\mathrm{e}{-05}\)
		& \(12\) & \(0.4118\) & \(4.5080\mathrm{e}{-07}\) & \(2.0477\mathrm{e}{-05}\) \\
		\(100\)
		& \(\mathrm{Fail}\) & \(--\) & \(--\) & \(--\)
		& \(18\) & \(37.883\) & \(8.1810\mathrm{e}{-07}\) & \(8.7423\mathrm{e}{-05}\)
		& \(12\) & \(6.1773\) & \(4.8608\mathrm{e}{-07}\) & \(4.4000\mathrm{e}{-05}\) \\
		\(150\)
		& \(\mathrm{Fail}\) & \(--\) & \(--\) & \(--\)
		& \(18\) & \(194.51\) & \(8.4488\mathrm{e}{-07}\) & \(1.3511\mathrm{e}{-04}\)
		& \(12\) & \(36.177\) & \(4.9796\mathrm{e}{-07}\) & \(6.7522\mathrm{e}{-05}\) \\
		\(200\)
		& \(\mathrm{Fail}\) & \(--\) & \(--\) & \(--\)
		& \(18\) & \(548.38\) & \(8.5836\mathrm{e}{-07}\) & \(1.8279\mathrm{e}{-04}\)
		& \(12\) & \(124.91\) & \(5.0392\mathrm{e}{-07}\) & \(9.1043\mathrm{e}{-05}\) \\
		\(250\)
		& \(\mathrm{Fail}\) & \(--\) & \(--\) & \(--\)
		& \(18\) & \(617.2\) & \(8.6647\mathrm{e}{-07}\) & \(2.3048\mathrm{e}{-04}\)
		& \(12\) & \(283.34\) & \(5.0750\mathrm{e}{-07}\) & \(1.1456\mathrm{e}{-04}\) \\
		
		\midrule
		
		\(\mathrm{FPI\mbox{-}SS}\)
		& \multicolumn{4}{c}{
			\makecell[c]{\(\mu=0\)\\
				\((\alpha=4.3,\omega=0.9)\)}}
		& \multicolumn{4}{c}{
			\makecell[c]{\(\mu=2\)\\
				\((\alpha=9.7,\omega=0.95)\)}}
		& \multicolumn{4}{c}{
			\makecell[c]{\(\mu=4\)\\
				\((\alpha=13.4,\omega=0.9)\)}} \\
		\cmidrule(lr){2-5}
		\cmidrule(lr){6-9}
		\cmidrule(lr){10-13}
		\(m\)
		& \(\mathrm{IT}\) & \(\mathrm{CPU}\) & \(\mathrm{RES}\) & \(\mathrm{ERR}\)
		& \(\mathrm{IT}\) & \(\mathrm{CPU}\) & \(\mathrm{RES}\) & \(\mathrm{ERR}\)
		& \(\mathrm{IT}\) & \(\mathrm{CPU}\) & \(\mathrm{RES}\) & \(\mathrm{ERR}\) \\
		\midrule
		\(10\)
		& \(129\) & \(0.0078\) & \(9.9641\mathrm{e}{-07}\) & \(6.9247\mathrm{e}{-05}\)
		& \(18\) & \(0.0008\) & \(9.6034\mathrm{e}{-07}\) & \(7.8722\mathrm{e}{-06}\)
		& \(12\) & \(0.0001\) & \(8.2668\mathrm{e}{-07}\) & \(7.0993\mathrm{e}{-06}\) \\
		\(50\)
		& \(\mathrm{Fail}\) & \(--\) & \(--\) & \(--\)
		& \(20\) & \(0.0178\) & \(6.2898\mathrm{e}{-07}\) & \(3.3158\mathrm{e}{-05}\)
		& \(13\) & \(0.0032\) & \(5.8764\mathrm{e}{-07}\) & \(2.6570\mathrm{e}{-05}\) \\
		\(100\)
		& \(\mathrm{Fail}\) & \(--\) & \(--\) & \(--\)
		& \(20\) & \(0.0913\) & \(6.8489\mathrm{e}{-07}\) & \(7.2543\mathrm{e}{-05}\)
		& \(13\) & \(0.0156\) & \(6.2746\mathrm{e}{-07}\) & \(5.6675\mathrm{e}{-05}\) \\
		\(150\)
		& \(\mathrm{Fail}\) & \(--\) & \(--\) & \(--\)
		& \(20\) & \(0.1127\) & \(7.0394\mathrm{e}{-07}\) & \(1.1193\mathrm{e}{-04}\)
		& \(13\) & \(0.0419\) & \(6.4088\mathrm{e}{-07}\) & \(8.6781\mathrm{e}{-05}\) \\
		\(200\)
		& \(\mathrm{Fail}\) & \(--\) & \(--\) & \(--\)
		& \(20\) & \(0.3926\) & \(7.1354\mathrm{e}{-07}\) & \(1.5131\mathrm{e}{-04}\)
		& \(13\) & \(0.0924\) & \(6.4762\mathrm{e}{-07}\) & \(1.1689\mathrm{e}{-04}\) \\
		\(250\)
		& \(\mathrm{Fail}\) & \(--\) & \(--\) & \(--\)
		& \(20\) & \(0.2569\) & \(7.1932\mathrm{e}{-07}\) & \(1.9070\mathrm{e}{-04}\)
		& \(13\) & \(0.156\) & \(6.5168\mathrm{e}{-07}\) & \(1.4699\mathrm{e}{-04}\) \\
		
		\midrule
		
		\(\mathbf{RMNSI}\)
		& \multicolumn{4}{c}{
			\makecell[c]{\(\mu=0\)\\ \((\theta=0.7)\)}}
		& \multicolumn{4}{c}{
			\makecell[c]{\(\mu=2\)\\ \((\theta=0.7)\)}}
		& \multicolumn{4}{c}{
			\makecell[c]{\(\mu=4\)\\ \((\theta=0.7)\)}} \\
		\cmidrule(lr){2-5}
		\cmidrule(lr){6-9}
		\cmidrule(lr){10-13}
		\(m\)
		& \(\mathrm{IT}\) & \(\mathrm{CPU}\) & \(\mathrm{RES}\) & \(\mathrm{ERR}\)
		& \(\mathrm{IT}\) & \(\mathrm{CPU}\) & \(\mathrm{RES}\) & \(\mathrm{ERR}\)
		& \(\mathrm{IT}\) & \(\mathrm{CPU}\) & \(\mathrm{RES}\) & \(\mathrm{ERR}\) \\
		\midrule
		\(10\)
		& \(123\) & \(0.0074\) & \(9.4772\mathrm{e}{-07}\) & \(4.3548\mathrm{e}{-05}\)
		& \(13\) & \(0.0008\) & \(6.5556\mathrm{e}{-07}\) & \(6.6995\mathrm{e}{-06}\)
		& \(8\) & \(0.0001\) & \(4.3343\mathrm{e}{-07}\) & \(3.8448\mathrm{e}{-06}\) \\
		\(50\)
		& \(\mathrm{Fail}\) & \(--\) & \(--\) & \(--\)
		& \(14\) & \(0.0069\) & \(8.4777\mathrm{e}{-07}\) & \(4.4434\mathrm{e}{-05}\)
		& \(8\) & \(0.0009\) & \(9.5122\mathrm{e}{-07}\) & \(4.3025\mathrm{e}{-05}\) \\
		\(100\)
		& \(\mathrm{Fail}\) & \(--\) & \(--\) & \(--\)
		& \(14\) & \(0.0296\) & \(9.2137\mathrm{e}{-07}\) & \(9.7290\mathrm{e}{-05}\)
		& \(9\) & \(0.0074\) & \(2.1426\mathrm{e}{-07}\) & \(1.9327\mathrm{e}{-05}\) \\
		\(150\)
		& \(\mathrm{Fail}\) & \(--\) & \(--\) & \(--\)
		& \(14\) & \(0.0415\) & \(9.4626\mathrm{e}{-07}\) & \(1.5014\mathrm{e}{-04}\)
		& \(9\) & \(0.0192\) & \(2.1925\mathrm{e}{-07}\) & \(2.9662\mathrm{e}{-05}\) \\
		\(200\)
		& \(\mathrm{Fail}\) & \(--\) & \(--\) & \(--\)
		& \(14\) & \(0.1484\) & \(9.5878\mathrm{e}{-07}\) & \(2.0300\mathrm{e}{-04}\)
		& \(9\) & \(0.0381\) & \(2.2175\mathrm{e}{-07}\) & \(3.9996\mathrm{e}{-05}\) \\
		\(250\)
		& \(\mathrm{Fail}\) & \(--\) & \(--\) & \(--\)
		& \(14\) & \(0.0918\) & \(9.6631\mathrm{e}{-07}\) & \(2.5585\mathrm{e}{-04}\)
		& \(9\) & \(0.0585\) & \(2.2326\mathrm{e}{-07}\) & \(5.0331\mathrm{e}{-05}\) \\
		
		\bottomrule
	\end{tabular*}
\end{table}

	Table~\ref{tab:ex4-comparison} reports the results for
	Example~\ref{Ex4} and clearly shows the effect of the diagonal shift.
	When $\mu=0$, the problem is difficult, and none of the methods succeeds
	over the full range of problem sizes. Increasing $\mu$ improves the
	convergence of most methods. M3 gives the most accurate solutions,
	whereas RMNSI provides lower computing times than GGS and FPI-SS. For
	$\mu=4$, RMNSI becomes the fastest method for the larger systems while
	maintaining satisfactory accuracy. Thus, the proposed method offers a
	good balance between speed and accuracy when the matrix has sufficient
	diagonal dominance.

	\begin{figure}[width=\linewidth,pos=H]
		\centering
		\includegraphics[width=\linewidth]{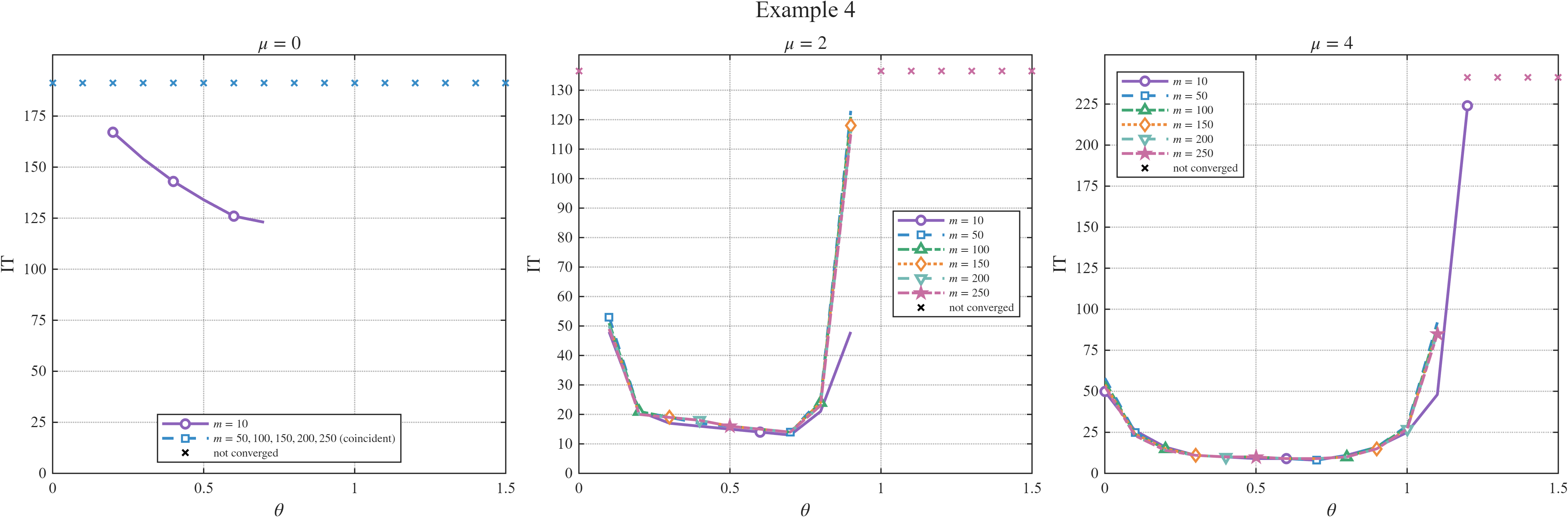}
		\caption{\(\mathrm{Effect~of~the~relaxation~parameter~\theta~on~the~iteration~
				count~of~RMNSI~for~Example~\ref{Ex4}~with~different~values~of~m.}\)}
		\label{fig:theta-ex4}
	\end{figure}

Figure~\ref{fig:theta-ex4} presents the range of $\theta$ for which
RMNSI converges in Example~\ref{Ex4}. This range is narrow without a
diagonal shift but becomes wider and more stable for positive values of
$\mu$; values outside it result in slow convergence or failure.

\begin{example}\label{Ex5} (\cite{LuoEtAl2026}). The matrices $V$ and $D$ are the same as those
	in Example~\ref{Ex4}, but the prescribed complementarity solution is
$
		z_i^*=
		\begin{cases}
			1,  & i\text{ is odd},\\
			10, & i\text{ is even}.
		\end{cases}
$
	Thus, $z^*=(1,10,1,10,\ldots)^{\mathsf T}$. We again set
	$q=-Vz^*$, so that $w^*=0$.
	
	By \eqref{eq:LCP-GAVE-2}, the resulting GAVE~\eqref{eq:GAVE} has
	$A=V+D$, $B=D-V$, and $b=-q$. Thus the exact solution is
$
		x^*=\frac{1}{2}z^*
		=
		\left(
		\frac{1}{2},5,\frac{1}{2},5,\ldots
		\right)^{\mathsf T}.
$

	Compared with Example~\ref{Ex4}, this example has a larger variation
	between consecutive components of the exact solution.
\end{example}

\begin{table}[width=\linewidth,cols=13,pos=h]
	\caption{\(\mathrm{Performance~comparison~of~tested~methods~
			for~Example~\ref{Ex5}.}\)}
	\label{tab:ex5-comparison}
	\footnotesize
	\setlength{\tabcolsep}{1.2pt}
	\renewcommand{\arraystretch}{0.9}
	
	\begin{tabular*}{\tblwidth}
		{@{\extracolsep{\fill}}l*{12}{c}@{}}
		\toprule
		
		\(\mathrm{FPM\mbox{-}M}\)
		& \multicolumn{4}{c}{\(\mu=0\)}
		& \multicolumn{4}{c}{\(\mu=2\)}
		& \multicolumn{4}{c}{\(\mu=4\)} \\
		\cmidrule(lr){2-5}
		\cmidrule(lr){6-9}
		\cmidrule(lr){10-13}
		\(m\)
		& \(\mathrm{IT}\) & \(\mathrm{CPU}\) & \(\mathrm{RES}\) & \(\mathrm{ERR}\)
		& \(\mathrm{IT}\) & \(\mathrm{CPU}\) & \(\mathrm{RES}\) & \(\mathrm{ERR}\)
		& \(\mathrm{IT}\) & \(\mathrm{CPU}\) & \(\mathrm{RES}\) & \(\mathrm{ERR}\) \\
		\midrule
		\(10\)
		& \(\mathrm{Fail}\) & \(--\) & \(--\) & \(--\)
		& \(\mathrm{Fail}\) & \(--\) & \(--\) & \(--\)
		& \(51\) & \(0.0003\) & \(8.3972\mathrm{e}{-07}\) & \(1.5766\mathrm{e}{-05}\) \\
		\(50\)
		& \(\mathrm{Fail}\) & \(--\) & \(--\) & \(--\)
		& \(\mathrm{Fail}\) & \(--\) & \(--\) & \(--\)
		& \(58\) & \(0.0036\) & \(9.9910\mathrm{e}{-07}\) & \(8.8969\mathrm{e}{-05}\) \\
		\(100\)
		& \(\mathrm{Fail}\) & \(--\) & \(--\) & \(--\)
		& \(\mathrm{Fail}\) & \(--\) & \(--\) & \(--\)
		& \(57\) & \(0.0255\) & \(8.6692\mathrm{e}{-07}\) & \(1.5387\mathrm{e}{-04}\) \\
		\(150\)
		& \(\mathrm{Fail}\) & \(--\) & \(--\) & \(--\)
		& \(\mathrm{Fail}\) & \(--\) & \(--\) & \(--\)
		& \(55\) & \(0.0593\) & \(9.6719\mathrm{e}{-07}\) & \(2.5726\mathrm{e}{-04}\) \\
		\(200\)
		& \(\mathrm{Fail}\) & \(--\) & \(--\) & \(--\)
		& \(\mathrm{Fail}\) & \(--\) & \(--\) & \(--\)
		& \(54\) & \(0.1389\) & \(9.7903\mathrm{e}{-07}\) & \(3.4704\mathrm{e}{-04}\) \\
		\(250\)
		& \(\mathrm{Fail}\) & \(--\) & \(--\) & \(--\)
		& \(\mathrm{Fail}\) & \(--\) & \(--\) & \(--\)
		& \(54\) & \(0.1953\) & \(8.8007\mathrm{e}{-07}\) & \(3.8982\mathrm{e}{-04}\) \\
		
		\midrule
		
		\(\mathrm{M3}\)
		& \multicolumn{4}{c}{\(\mu=0\)}
		& \multicolumn{4}{c}{\(\mu=2\)}
		& \multicolumn{4}{c}{\(\mu=4\)} \\
		\cmidrule(lr){2-5}
		\cmidrule(lr){6-9}
		\cmidrule(lr){10-13}
		\(m\)
		& \(\mathrm{IT}\) & \(\mathrm{CPU}\) & \(\mathrm{RES}\) & \(\mathrm{ERR}\)
		& \(\mathrm{IT}\) & \(\mathrm{CPU}\) & \(\mathrm{RES}\) & \(\mathrm{ERR}\)
		& \(\mathrm{IT}\) & \(\mathrm{CPU}\) & \(\mathrm{RES}\) & \(\mathrm{ERR}\) \\
		\midrule
		\(10\)
		& \(23\) & \(0.0005\) & \(3.0905\mathrm{e}{-16}\) & \(5.1114\mathrm{e}{-14}\)
		& \(8\) & \(0.0007\) & \(3.5824\mathrm{e}{-16}\) & \(1.3213\mathrm{e}{-14}\)
		& \(6\) & \(0.0004\) & \(2.4001\mathrm{e}{-16}\) & \(6.3236\mathrm{e}{-15}\) \\
		\(50\)
		& \(73\) & \(0.0116\) & \(3.6915\mathrm{e}{-16}\) & \(3.9837\mathrm{e}{-12}\)
		& \(8\) & \(0.0069\) & \(5.4968\mathrm{e}{-16}\) & \(1.1294\mathrm{e}{-13}\)
		& \(6\) & \(0.0073\) & \(3.9576\mathrm{e}{-16}\) & \(5.2420\mathrm{e}{-14}\) \\
		\(100\)
		& \(151\) & \(0.1121\) & \(3.6700\mathrm{e}{-16}\) & \(6.0160\mathrm{e}{-11}\)
		& \(8\) & \(0.043\) & \(4.4567\mathrm{e}{-16}\) & \(1.7016\mathrm{e}{-13}\)
		& \(6\) & \(0.0362\) & \(3.0738\mathrm{e}{-16}\) & \(8.2165\mathrm{e}{-14}\) \\
		\(150\)
		& \(323\) & \(0.5381\) & \(3.9029\mathrm{e}{-16}\) & \(2.1425\mathrm{e}{-10}\)
		& \(8\) & \(0.0914\) & \(4.4174\mathrm{e}{-16}\) & \(2.5003\mathrm{e}{-13}\)
		& \(6\) & \(0.083\) & \(3.1877\mathrm{e}{-16}\) & \(1.2798\mathrm{e}{-13}\) \\
		\(200\)
		& \(\mathrm{Fail}\) & \(--\) & \(--\) & \(--\)
		& \(8\) & \(0.1944\) & \(4.3593\mathrm{e}{-16}\) & \(3.3114\mathrm{e}{-13}\)
		& \(6\) & \(0.1679\) & \(3.0506\mathrm{e}{-16}\) & \(1.6252\mathrm{e}{-13}\) \\
		\(250\)
		& \(\mathrm{Fail}\) & \(--\) & \(--\) & \(--\)
		& \(8\) & \(0.3002\) & \(4.3493\mathrm{e}{-16}\) & \(4.0876\mathrm{e}{-13}\)
		& \(6\) & \(0.4972\) & \(3.1480\mathrm{e}{-16}\) & \(2.1073\mathrm{e}{-13}\) \\
		
		\midrule
		
		\(\mathrm{GGS}\)
		& \multicolumn{4}{c}{\(\mu=0\)}
		& \multicolumn{4}{c}{\(\mu=2\)}
		& \multicolumn{4}{c}{\(\mu=4\)} \\
		\cmidrule(lr){2-5}
		\cmidrule(lr){6-9}
		\cmidrule(lr){10-13}
		\(m\)
		& \(\mathrm{IT}\) & \(\mathrm{CPU}\) & \(\mathrm{RES}\) & \(\mathrm{ERR}\)
		& \(\mathrm{IT}\) & \(\mathrm{CPU}\) & \(\mathrm{RES}\) & \(\mathrm{ERR}\)
		& \(\mathrm{IT}\) & \(\mathrm{CPU}\) & \(\mathrm{RES}\) & \(\mathrm{ERR}\) \\
		\midrule
		\(10\)
		& \(112\) & \(0.0584\) & \(9.7354\mathrm{e}{-07}\) & \(5.3819\mathrm{e}{-04}\)
		& \(16\) & \(0.0083\) & \(8.2306\mathrm{e}{-07}\) & \(5.3709\mathrm{e}{-05}\)
		& \(11\) & \(0.0061\) & \(7.7898\mathrm{e}{-07}\) & \(3.9218\mathrm{e}{-05}\) \\
		\(50\)
		& \(\mathrm{Fail}\) & \(--\) & \(--\) & \(--\)
		& \(18\) & \(0.6226\) & \(5.7390\mathrm{e}{-07}\) & \(2.0844\mathrm{e}{-04}\)
		& \(12\) & \(0.4029\) & \(5.0497\mathrm{e}{-07}\) & \(1.3299\mathrm{e}{-04}\) \\
		\(100\)
		& \(\mathrm{Fail}\) & \(--\) & \(--\) & \(--\)
		& \(18\) & \(9.2162\) & \(6.2697\mathrm{e}{-07}\) & \(4.5643\mathrm{e}{-04}\)
		& \(12\) & \(6.1703\) & \(5.3911\mathrm{e}{-07}\) & \(2.8421\mathrm{e}{-04}\) \\
		\(150\)
		& \(\mathrm{Fail}\) & \(--\) & \(--\) & \(--\)
		& \(18\) & \(55.954\) & \(6.4473\mathrm{e}{-07}\) & \(7.0439\mathrm{e}{-04}\)
		& \(12\) & \(36.744\) & \(5.5053\mathrm{e}{-07}\) & \(4.3542\mathrm{e}{-04}\) \\
		\(200\)
		& \(\mathrm{Fail}\) & \(--\) & \(--\) & \(--\)
		& \(18\) & \(193.09\) & \(6.5363\mathrm{e}{-07}\) & \(9.5235\mathrm{e}{-04}\)
		& \(12\) & \(127.62\) & \(5.5625\mathrm{e}{-07}\) & \(5.8663\mathrm{e}{-04}\) \\
		\(250\)
		& \(\mathrm{Fail}\) & \(--\) & \(--\) & \(--\)
		& \(18\) & \(555.03\) & \(6.5897\mathrm{e}{-07}\) & \(1.2003\mathrm{e}{-03}\)
		& \(12\) & \(381.78\) & \(5.5968\mathrm{e}{-07}\) & \(7.3784\mathrm{e}{-04}\) \\
		
		\midrule
		
		\(\mathrm{FPI\mbox{-}SS}\)
		& \multicolumn{4}{c}{
			\makecell[c]{\(\mu=0\)\\
				\((\alpha=3.5,\omega=0.75)\)}}
		& \multicolumn{4}{c}{
			\makecell[c]{\(\mu=2\)\\
				\((\alpha=9.9,\omega=0.9)\)}}
		& \multicolumn{4}{c}{
			\makecell[c]{\(\mu=4\)\\
				\((\alpha=13.2,\omega=0.9)\)}} \\
		\cmidrule(lr){2-5}
		\cmidrule(lr){6-9}
		\cmidrule(lr){10-13}
		\(m\)
		& \(\mathrm{IT}\) & \(\mathrm{CPU}\) & \(\mathrm{RES}\) & \(\mathrm{ERR}\)
		& \(\mathrm{IT}\) & \(\mathrm{CPU}\) & \(\mathrm{RES}\) & \(\mathrm{ERR}\)
		& \(\mathrm{IT}\) & \(\mathrm{CPU}\) & \(\mathrm{RES}\) & \(\mathrm{ERR}\) \\
		\midrule
		\(10\)
		& \(80\) & \(0.0008\) & \(8.8087\mathrm{e}{-07}\) & \(7.2999\mathrm{e}{-05}\)
		& \(18\) & \(0.0002\) & \(8.9063\mathrm{e}{-07}\) & \(6.2223\mathrm{e}{-05}\)
		& \(12\) & \(0.0002\) & \(7.5536\mathrm{e}{-07}\) & \(2.7436\mathrm{e}{-05}\) \\
		\(50\)
		& \(\mathrm{Fail}\) & \(--\) & \(--\) & \(--\)
		& \(20\) & \(0.0047\) & \(7.1706\mathrm{e}{-07}\) & \(2.6154\mathrm{e}{-04}\)
		& \(13\) & \(0.0034\) & \(3.7724\mathrm{e}{-07}\) & \(9.4339\mathrm{e}{-05}\) \\
		\(100\)
		& \(\mathrm{Fail}\) & \(--\) & \(--\) & \(--\)
		& \(20\) & \(0.0297\) & \(7.8057\mathrm{e}{-07}\) & \(5.6934\mathrm{e}{-04}\)
		& \(13\) & \(0.0159\) & \(3.8972\mathrm{e}{-07}\) & \(2.0042\mathrm{e}{-04}\) \\
		\(150\)
		& \(\mathrm{Fail}\) & \(--\) & \(--\) & \(--\)
		& \(20\) & \(0.0765\) & \(8.0184\mathrm{e}{-07}\) & \(8.7712\mathrm{e}{-04}\)
		& \(13\) & \(0.055\) & \(3.9391\mathrm{e}{-07}\) & \(3.0652\mathrm{e}{-04}\) \\
		\(200\)
		& \(\mathrm{Fail}\) & \(--\) & \(--\) & \(--\)
		& \(20\) & \(0.17\) & \(8.1250\mathrm{e}{-07}\) & \(1.1849\mathrm{e}{-03}\)
		& \(13\) & \(0.0971\) & \(3.9602\mathrm{e}{-07}\) & \(4.1261\mathrm{e}{-04}\) \\
		\(250\)
		& \(\mathrm{Fail}\) & \(--\) & \(--\) & \(--\)
		& \(20\) & \(0.2592\) & \(8.1890\mathrm{e}{-07}\) & \(1.4927\mathrm{e}{-03}\)
		& \(13\) & \(0.5698\) & \(3.9728\mathrm{e}{-07}\) & \(5.1871\mathrm{e}{-04}\) \\
		
		\midrule
		
		\(\mathbf{RMNSI}\)
		& \multicolumn{4}{c}{
			\makecell[c]{\(\mu=0\)\\ \((\theta=0.7)\)}}
		& \multicolumn{4}{c}{
			\makecell[c]{\(\mu=2\)\\ \((\theta=0.7)\)}}
		& \multicolumn{4}{c}{
			\makecell[c]{\(\mu=4\)\\ \((\theta=0.7)\)}} \\
		\cmidrule(lr){2-5}
		\cmidrule(lr){6-9}
		\cmidrule(lr){10-13}
		\(m\)
		& \(\mathrm{IT}\) & \(\mathrm{CPU}\) & \(\mathrm{RES}\) & \(\mathrm{ERR}\)
		& \(\mathrm{IT}\) & \(\mathrm{CPU}\) & \(\mathrm{RES}\) & \(\mathrm{ERR}\)
		& \(\mathrm{IT}\) & \(\mathrm{CPU}\) & \(\mathrm{RES}\) & \(\mathrm{ERR}\) \\
		\midrule
		\(10\)
		& \(104\) & \(0.0011\) & \(9.7805\mathrm{e}{-07}\) & \(5.4774\mathrm{e}{-04}\)
		& \(13\) & \(0.0002\) & \(4.9603\mathrm{e}{-07}\) & \(3.4688\mathrm{e}{-05}\)
		& \(8\) & \(0.0001\) & \(4.8641\mathrm{e}{-07}\) & \(2.4217\mathrm{e}{-05}\) \\
		\(50\)
		& \(\mathrm{Fail}\) & \(--\) & \(--\) & \(--\)
		& \(14\) & \(0.0017\) & \(6.2234\mathrm{e}{-07}\) & \(2.2670\mathrm{e}{-04}\)
		& \(9\) & \(0.001\) & \(2.1594\mathrm{e}{-07}\) & \(5.6465\mathrm{e}{-05}\) \\
		\(100\)
		& \(\mathrm{Fail}\) & \(--\) & \(--\) & \(--\)
		& \(14\) & \(0.0115\) & \(6.8079\mathrm{e}{-07}\) & \(4.9628\mathrm{e}{-04}\)
		& \(9\) & \(0.0066\) & \(2.3095\mathrm{e}{-07}\) & \(1.2135\mathrm{e}{-04}\) \\
		\(150\)
		& \(\mathrm{Fail}\) & \(--\) & \(--\) & \(--\)
		& \(14\) & \(0.0293\) & \(7.0037\mathrm{e}{-07}\) & \(7.6585\mathrm{e}{-04}\)
		& \(9\) & \(0.0222\) & \(2.3597\mathrm{e}{-07}\) & \(1.8623\mathrm{e}{-04}\) \\
		\(200\)
		& \(\mathrm{Fail}\) & \(--\) & \(--\) & \(--\)
		& \(14\) & \(0.0822\) & \(7.1018\mathrm{e}{-07}\) & \(1.0354\mathrm{e}{-03}\)
		& \(9\) & \(0.0393\) & \(2.3848\mathrm{e}{-07}\) & \(2.5110\mathrm{e}{-04}\) \\
		\(250\)
		& \(\mathrm{Fail}\) & \(--\) & \(--\) & \(--\)
		& \(14\) & \(0.0958\) & \(7.1608\mathrm{e}{-07}\) & \(1.3050\mathrm{e}{-03}\)
		& \(9\) & \(0.1728\) & \(2.3999\mathrm{e}{-07}\) & \(3.1598\mathrm{e}{-04}\) \\
		
		\bottomrule
	\end{tabular*}
\end{table}

Table~\ref{tab:ex5-comparison} summarizes the results for
Example~\ref{Ex5}. Without the diagonal shift, the problem is difficult
for all the methods, and none completes the full set of tests. For
$\mu=2$, all methods except FPM-M converge; M3 gives the highest
accuracy, while RMNSI has the shortest running time. When $\mu=4$, all
methods converge, and RMNSI remains the fastest across the tested problem
sizes. Although M3 produces more accurate approximations, RMNSI gives a
better practical balance between computing time and solution accuracy.

\begin{figure}[width=\linewidth,pos=h]
	\centering
	\includegraphics[width=\linewidth]{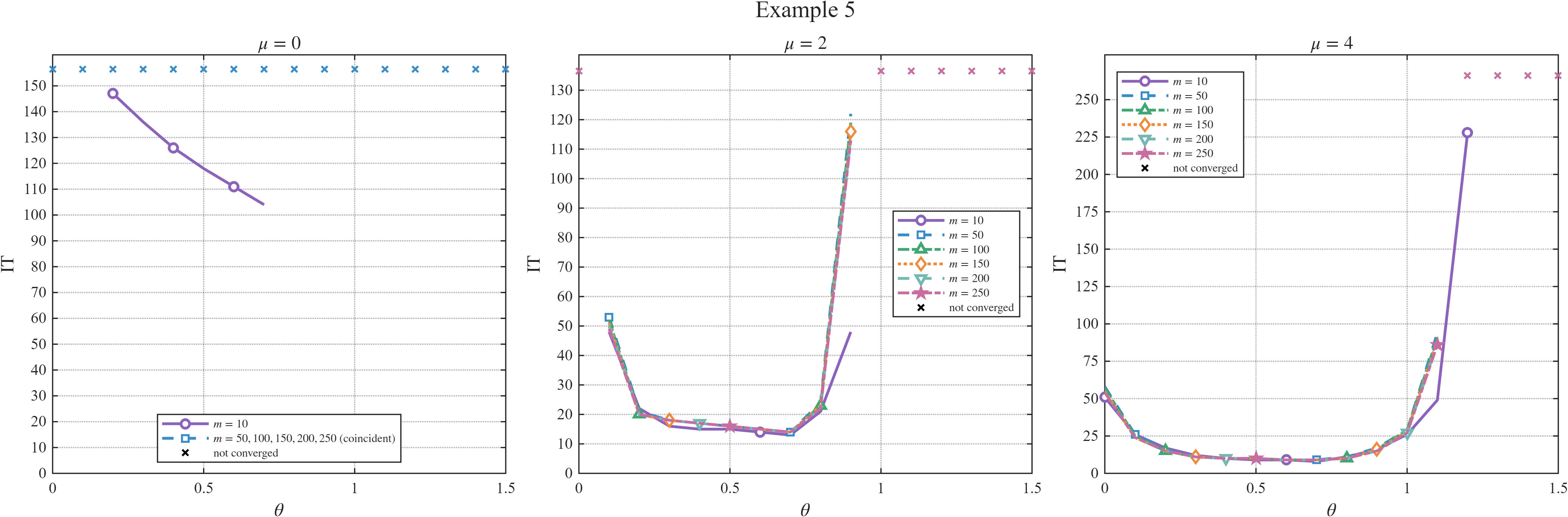}
	\caption{\(\mathrm{Effect~of~the~relaxation~parameter~\theta~on~the~iteration~
			count~of~RMNSI~for~Example~\ref{Ex5}~with~different~values~of~m.}\)}
	\label{fig:theta-ex5}
\end{figure}	

Figure~\ref{fig:theta-ex5} records how the relaxation parameter affects
Example~\ref{Ex5}. The poor behaviour of the unshifted problem is
substantially improved by positive values of $\mu$, which create a clear
low-iteration region shared by the different problem sizes.

\begin{example}\label{Ex6}(\cite{XieQiHan2025}). Let
	$L=\mathcal{V}_{m,\mu}(-1,4,-1;-1,-1)$ and define the positive and
	negative parts of $x$ by
	$x_{+}=\max\{x,0\}$ and $x_{-}=\max\{-x,0\}$, respectively. Consider
	\begin{equation}\label{eq:asymmetric-ridge}
		\min_{x\in\mathbb{R}^n}\Phi(x),
		~
		\Phi(x)
		=
		\frac{1}{2}\|Lx-c\|_2^2
		+\lambda_{+}\|x_{+}\|_2^2
		+\lambda_{-}\|x_{-}\|_2^2,
	\end{equation}
	where $c\in\mathbb{R}^n$ and $\lambda_{+},\lambda_{-}>0$. Since
	$x_{+}=(x+|x|)/2$ and $x_{-}=(|x|-x)/2$, the gradient of $\Phi$ is
	\begin{equation*}
		\nabla\Phi(x)
		=
		\left[
		L^{\mathsf T}L
		+(\lambda_{+}+\lambda_{-})I_n
		\right]x
		-(\lambda_{-}-\lambda_{+})|x|
		-L^{\mathsf T}c.
	\end{equation*}
	Consequently, the first-order optimality condition
	$\nabla\Phi(x)=0$ is equivalent to GAVE~\eqref{eq:GAVE} with
	\begin{equation*}
		A=L^{\mathsf T}L
		+(\lambda_{+}+\lambda_{-})I_n,
		\qquad
		B=(\lambda_{-}-\lambda_{+})I_n,
		\qquad
		b=L^{\mathsf T}c.
	\end{equation*}
	
	In the numerical experiment, we choose
	$\lambda_{+}=60$ and $\lambda_{-}=100$, giving
	\begin{equation*}
		A=L^{\mathsf T}L+160I_n,
		\qquad
		B=40I_n.
	\end{equation*}
	The prescribed exact solution is
	\begin{equation*}
		x_i^*
		=
		\begin{cases}
			-1, & i\text{ is odd},\\
			1,  & i\text{ is even},
		\end{cases}
		\qquad
		x^*=(-1,1,-1,1,\ldots)^{\mathsf T}.
	\end{equation*}
	Since $|x^*|=\boldsymbol e$, the right-hand side is constructed as
	$
		b
		=
		Ax^*-B|x^*|
		=
		(L^{\mathsf T}L+160I_n)x^*-40\boldsymbol e.
$
	For the selected matrices, $L$ is nonsingular. We therefore choose
$
		c=(L^{\mathsf T})^{-1}b,
$
	which ensures that $L^{\mathsf T}c=b$.
	
	Because $\lambda_{+}>0$ and $\lambda_{-}>0$, the function $\Phi$ is
	strongly convex. Hence, the condition $\nabla\Phi(x^*)=0$ is both
	necessary and sufficient for optimality, and $x^*$ is the unique
	minimizer of \eqref{eq:asymmetric-ridge}.
\end{example}

\begin{table}[width=\linewidth,cols=13,pos=H]
	\caption{\(\mathrm{Performance~comparison~of~tested~methods~
			for~Example~\ref{Ex6}.}\)}
	\label{tab:ex6-comparison}
	\footnotesize
	\setlength{\tabcolsep}{1.2pt}
	\renewcommand{\arraystretch}{0.9}
	
	\begin{tabular*}{\tblwidth}
		{@{\extracolsep{\fill}}l*{12}{c}@{}}
		\toprule
		
		\(\mathrm{FPM\mbox{-}M}\)
		& \multicolumn{4}{c}{\(\mu=0\)}
		& \multicolumn{4}{c}{\(\mu=2\)}
		& \multicolumn{4}{c}{\(\mu=4\)} \\
		\cmidrule(lr){2-5}
		\cmidrule(lr){6-9}
		\cmidrule(lr){10-13}
		\(m\)
		& \(\mathrm{IT}\) & \(\mathrm{CPU}\) & \(\mathrm{RES}\) & \(\mathrm{ERR}\)
		& \(\mathrm{IT}\) & \(\mathrm{CPU}\) & \(\mathrm{RES}\) & \(\mathrm{ERR}\)
		& \(\mathrm{IT}\) & \(\mathrm{CPU}\) & \(\mathrm{RES}\) & \(\mathrm{ERR}\) \\
		\midrule
		\(10\)
		& \(11\) & \(0.0022\) & \(5.2005\mathrm{e}{-07}\) & \(7.3392\mathrm{e}{-06}\)
		& \(11\) & \(0.0007\) & \(5.0524\mathrm{e}{-07}\) & \(7.2684\mathrm{e}{-06}\)
		& \(10\) & \(0.0008\) & \(8.0784\mathrm{e}{-07}\) & \(1.1643\mathrm{e}{-05}\) \\
		\(50\)
		& \(11\) & \(0.006\) & \(5.3070\mathrm{e}{-07}\) & \(3.7515\mathrm{e}{-05}\)
		& \(11\) & \(0.0211\) & \(5.4606\mathrm{e}{-07}\) & \(3.9415\mathrm{e}{-05}\)
		& \(10\) & \(0.0203\) & \(8.9521\mathrm{e}{-07}\) & \(6.4876\mathrm{e}{-05}\) \\
		\(100\)
		& \(11\) & \(0.0298\) & \(5.3179\mathrm{e}{-07}\) & \(7.5203\mathrm{e}{-05}\)
		& \(11\) & \(0.0974\) & \(5.5114\mathrm{e}{-07}\) & \(7.9593\mathrm{e}{-05}\)
		& \(10\) & \(0.1113\) & \(9.0615\mathrm{e}{-07}\) & \(1.3141\mathrm{e}{-04}\) \\
		\(150\)
		& \(11\) & \(0.1171\) & \(5.3215\mathrm{e}{-07}\) & \(1.1289\mathrm{e}{-04}\)
		& \(11\) & \(0.3792\) & \(5.5283\mathrm{e}{-07}\) & \(1.1977\mathrm{e}{-04}\)
		& \(10\) & \(0.5096\) & \(9.0979\mathrm{e}{-07}\) & \(1.9795\mathrm{e}{-04}\) \\
		\(200\)
		& \(11\) & \(0.182\) & \(5.3232\mathrm{e}{-07}\) & \(1.5057\mathrm{e}{-04}\)
		& \(11\) & \(0.5542\) & \(5.5368\mathrm{e}{-07}\) & \(1.5995\mathrm{e}{-04}\)
		& \(10\) & \(0.6802\) & \(9.1162\mathrm{e}{-07}\) & \(2.6448\mathrm{e}{-04}\) \\
		\(250\)
		& \(11\) & \(0.3143\) & \(5.3242\mathrm{e}{-07}\) & \(1.8826\mathrm{e}{-04}\)
		& \(11\) & \(0.9591\) & \(5.5419\mathrm{e}{-07}\) & \(2.0012\mathrm{e}{-04}\)
		& \(10\) & \(0.667\) & \(9.1271\mathrm{e}{-07}\) & \(3.3102\mathrm{e}{-04}\) \\
		
		\midrule
		
		\(\mathrm{M3}\)
		& \multicolumn{4}{c}{\(\mu=0\)}
		& \multicolumn{4}{c}{\(\mu=2\)}
		& \multicolumn{4}{c}{\(\mu=4\)} \\
		\cmidrule(lr){2-5}
		\cmidrule(lr){6-9}
		\cmidrule(lr){10-13}
		\(m\)
		& \(\mathrm{IT}\) & \(\mathrm{CPU}\) & \(\mathrm{RES}\) & \(\mathrm{ERR}\)
		& \(\mathrm{IT}\) & \(\mathrm{CPU}\) & \(\mathrm{RES}\) & \(\mathrm{ERR}\)
		& \(\mathrm{IT}\) & \(\mathrm{CPU}\) & \(\mathrm{RES}\) & \(\mathrm{ERR}\) \\
		\midrule
		\(10\)
		& \(33\) & \(0.0009\) & \(8.9156\mathrm{e}{-07}\) & \(1.2676\mathrm{e}{-05}\)
		& \(33\) & \(0.0012\) & \(7.4989\mathrm{e}{-07}\) & \(1.0902\mathrm{e}{-05}\)
		& \(32\) & \(0.001\) & \(7.7600\mathrm{e}{-07}\) & \(1.1379\mathrm{e}{-05}\) \\
		\(50\)
		& \(33\) & \(0.0294\) & \(9.0764\mathrm{e}{-07}\) & \(6.4386\mathrm{e}{-05}\)
		& \(33\) & \(0.0244\) & \(7.9011\mathrm{e}{-07}\) & \(5.7544\mathrm{e}{-05}\)
		& \(32\) & \(0.0296\) & \(8.4194\mathrm{e}{-07}\) & \(6.2082\mathrm{e}{-05}\) \\
		\(100\)
		& \(33\) & \(0.1575\) & \(9.0925\mathrm{e}{-07}\) & \(1.2895\mathrm{e}{-04}\)
		& \(33\) & \(0.1657\) & \(7.9469\mathrm{e}{-07}\) & \(1.1576\mathrm{e}{-04}\)
		& \(32\) & \(0.1345\) & \(8.4973\mathrm{e}{-07}\) & \(1.2538\mathrm{e}{-04}\) \\
		\(150\)
		& \(33\) & \(0.5005\) & \(9.0977\mathrm{e}{-07}\) & \(1.9351\mathrm{e}{-04}\)
		& \(33\) & \(0.452\) & \(7.9619\mathrm{e}{-07}\) & \(1.7398\mathrm{e}{-04}\)
		& \(32\) & \(0.4989\) & \(8.5231\mathrm{e}{-07}\) & \(1.8867\mathrm{e}{-04}\) \\
		\(200\)
		& \(33\) & \(0.815\) & \(9.1003\mathrm{e}{-07}\) & \(2.5807\mathrm{e}{-04}\)
		& \(33\) & \(0.7464\) & \(7.9694\mathrm{e}{-07}\) & \(2.3219\mathrm{e}{-04}\)
		& \(32\) & \(0.7709\) & \(8.5360\mathrm{e}{-07}\) & \(2.5196\mathrm{e}{-04}\) \\
		\(250\)
		& \(33\) & \(1.5122\) & \(9.1018\mathrm{e}{-07}\) & \(3.2263\mathrm{e}{-04}\)
		& \(33\) & \(1.3754\) & \(7.9739\mathrm{e}{-07}\) & \(2.9040\mathrm{e}{-04}\)
		& \(32\) & \(1.2502\) & \(8.5437\mathrm{e}{-07}\) & \(3.1525\mathrm{e}{-04}\) \\
		
		\midrule
		
		\(\mathrm{GGS}\)
		& \multicolumn{4}{c}{\(\mu=0\)}
		& \multicolumn{4}{c}{\(\mu=2\)}
		& \multicolumn{4}{c}{\(\mu=4\)} \\
		\cmidrule(lr){2-5}
		\cmidrule(lr){6-9}
		\cmidrule(lr){10-13}
		\(m\)
		& \(\mathrm{IT}\) & \(\mathrm{CPU}\) & \(\mathrm{RES}\) & \(\mathrm{ERR}\)
		& \(\mathrm{IT}\) & \(\mathrm{CPU}\) & \(\mathrm{RES}\) & \(\mathrm{ERR}\)
		& \(\mathrm{IT}\) & \(\mathrm{CPU}\) & \(\mathrm{RES}\) & \(\mathrm{ERR}\) \\
		\midrule
		\(10\)
		& \(6\) & \(0.0046\) & \(1.7268\mathrm{e}{-07}\) & \(1.5619\mathrm{e}{-06}\)
		& \(6\) & \(0.0274\) & \(8.6992\mathrm{e}{-07}\) & \(9.9011\mathrm{e}{-06}\)
		& \(7\) & \(0.0285\) & \(3.8315\mathrm{e}{-07}\) & \(4.9011\mathrm{e}{-06}\) \\
		\(50\)
		& \(6\) & \(0.1743\) & \(1.0460\mathrm{e}{-07}\) & \(5.0042\mathrm{e}{-06}\)
		& \(6\) & \(1.3157\) & \(9.4624\mathrm{e}{-07}\) & \(6.0426\mathrm{e}{-05}\)
		& \(7\) & \(1.58\) & \(5.2264\mathrm{e}{-07}\) & \(3.5537\mathrm{e}{-05}\) \\
		\(100\)
		& \(5\) & \(2.0335\) & \(9.7756\mathrm{e}{-07}\) & \(1.0829\mathrm{e}{-04}\)
		& \(6\) & \(15.685\) & \(9.4986\mathrm{e}{-07}\) & \(1.2337\mathrm{e}{-04}\)
		& \(7\) & \(20.677\) & \(5.3959\mathrm{e}{-07}\) & \(7.3796\mathrm{e}{-05}\) \\
		\(150\)
		& \(5\) & \(11.232\) & \(9.1171\mathrm{e}{-07}\) & \(1.5617\mathrm{e}{-04}\)
		& \(6\) & \(33.185\) & \(9.5080\mathrm{e}{-07}\) & \(1.8631\mathrm{e}{-04}\)
		& \(7\) & \(73.607\) & \(5.4522\mathrm{e}{-07}\) & \(1.1205\mathrm{e}{-04}\) \\
		\(200\)
		& \(5\) & \(39.154\) & \(8.7624\mathrm{e}{-07}\) & \(2.0384\mathrm{e}{-04}\)
		& \(6\) & \(172.68\) & \(9.5121\mathrm{e}{-07}\) & \(2.4924\mathrm{e}{-04}\)
		& \(7\) & \(92.778\) & \(5.4804\mathrm{e}{-07}\) & \(1.5031\mathrm{e}{-04}\) \\
		\(250\)
		& \(5\) & \(315.84\) & \(8.5402\mathrm{e}{-07}\) & \(2.5141\mathrm{e}{-04}\)
		& \(6\) & \(384.79\) & \(9.5144\mathrm{e}{-07}\) & \(3.1217\mathrm{e}{-04}\)
		& \(7\) & \(342.47\) & \(5.4973\mathrm{e}{-07}\) & \(1.8857\mathrm{e}{-04}\) \\
		
		\midrule
		
		\(\mathrm{FPI\mbox{-}SS}\)
		& \multicolumn{4}{c}{
			\makecell[c]{\(\mu=0\)\\
				\((\alpha=30,\omega=0.2)\)}}
		& \multicolumn{4}{c}{
			\makecell[c]{\(\mu=2\)\\
				\((\alpha=29.5,\omega=0.2)\)}}
		& \multicolumn{4}{c}{
			\makecell[c]{\(\mu=4\)\\
				\((\alpha=30,\omega=0.15)\)}} \\
		\cmidrule(lr){2-5}
		\cmidrule(lr){6-9}
		\cmidrule(lr){10-13}
		\(m\)
		& \(\mathrm{IT}\) & \(\mathrm{CPU}\) & \(\mathrm{RES}\) & \(\mathrm{ERR}\)
		& \(\mathrm{IT}\) & \(\mathrm{CPU}\) & \(\mathrm{RES}\) & \(\mathrm{ERR}\)
		& \(\mathrm{IT}\) & \(\mathrm{CPU}\) & \(\mathrm{RES}\) & \(\mathrm{ERR}\) \\
		\midrule
		\(10\)
		& \(48\) & \(0.0018\) & \(9.6360\mathrm{e}{-07}\) & \(8.6612\mathrm{e}{-06}\)
		& \(54\) & \(0.0029\) & \(9.9917\mathrm{e}{-07}\) & \(8.2521\mathrm{e}{-06}\)
		& \(61\) & \(0.0045\) & \(8.4564\mathrm{e}{-07}\) & \(9.9204\mathrm{e}{-06}\) \\
		\(50\)
		& \(47\) & \(0.0198\) & \(9.8879\mathrm{e}{-07}\) & \(4.7097\mathrm{e}{-05}\)
		& \(53\) & \(0.1087\) & \(9.0177\mathrm{e}{-07}\) & \(4.0471\mathrm{e}{-05}\)
		& \(59\) & \(0.1223\) & \(9.6738\mathrm{e}{-07}\) & \(6.4557\mathrm{e}{-05}\) \\
		\(100\)
		& \(47\) & \(0.1363\) & \(9.5805\mathrm{e}{-07}\) & \(9.2210\mathrm{e}{-05}\)
		& \(53\) & \(0.5519\) & \(8.2655\mathrm{e}{-07}\) & \(7.6078\mathrm{e}{-05}\)
		& \(59\) & \(0.9032\) & \(9.2526\mathrm{e}{-07}\) & \(1.2793\mathrm{e}{-04}\) \\
		\(150\)
		& \(47\) & \(0.471\) & \(9.4732\mathrm{e}{-07}\) & \(1.3728\mathrm{e}{-04}\)
		& \(52\) & \(1.6314\) & \(9.9393\mathrm{e}{-07}\) & \(1.3603\mathrm{e}{-04}\)
		& \(59\) & \(2.5225\) & \(9.1030\mathrm{e}{-07}\) & \(1.9127\mathrm{e}{-04}\) \\
		\(200\)
		& \(47\) & \(0.9128\) & \(9.4186\mathrm{e}{-07}\) & \(1.8235\mathrm{e}{-04}\)
		& \(52\) & \(2.8186\) & \(9.7614\mathrm{e}{-07}\) & \(1.7908\mathrm{e}{-04}\)
		& \(59\) & \(1.4468\) & \(9.0263\mathrm{e}{-07}\) & \(2.5460\mathrm{e}{-04}\) \\
		\(250\)
		& \(47\) & \(4.0541\) & \(9.3855\mathrm{e}{-07}\) & \(2.2741\mathrm{e}{-04}\)
		& \(52\) & \(4.345\) & \(9.6526\mathrm{e}{-07}\) & \(2.2211\mathrm{e}{-04}\)
		& \(59\) & \(2.0085\) & \(8.9797\mathrm{e}{-07}\) & \(3.1793\mathrm{e}{-04}\) \\
		
		\midrule
		
		\(\mathbf{RMNSI}\)
		& \multicolumn{4}{c}{
			\makecell[c]{\(\mu=0\)\\ \((\theta=1.6)\)}}
		& \multicolumn{4}{c}{
			\makecell[c]{\(\mu=2\)\\ \((\theta=1.6)\)}}
		& \multicolumn{4}{c}{
			\makecell[c]{\(\mu=4\)\\ \((\theta=1.5)\)}} \\
		\cmidrule(lr){2-5}
		\cmidrule(lr){6-9}
		\cmidrule(lr){10-13}
		\(m\)
		& \(\mathrm{IT}\) & \(\mathrm{CPU}\) & \(\mathrm{RES}\) & \(\mathrm{ERR}\)
		& \(\mathrm{IT}\) & \(\mathrm{CPU}\) & \(\mathrm{RES}\) & \(\mathrm{ERR}\)
		& \(\mathrm{IT}\) & \(\mathrm{CPU}\) & \(\mathrm{RES}\) & \(\mathrm{ERR}\) \\
		\midrule
		\(10\)
		& \(3\) & \(0.0015\) & \(2.3749\mathrm{e}{-07}\) & \(3.1381\mathrm{e}{-06}\)
		& \(3\) & \(0.0004\) & \(9.5916\mathrm{e}{-07}\) & \(1.1904\mathrm{e}{-05}\)
		& \(3\) & \(0.0004\) & \(4.0725\mathrm{e}{-07}\) & \(4.7518\mathrm{e}{-06}\) \\
		\(50\)
		& \(3\) & \(0.0026\) & \(1.9577\mathrm{e}{-07}\) & \(1.3534\mathrm{e}{-05}\)
		& \(3\) & \(0.0115\) & \(5.0534\mathrm{e}{-07}\) & \(3.2243\mathrm{e}{-05}\)
		& \(3\) & \(0.0161\) & \(1.9581\mathrm{e}{-07}\) & \(1.1455\mathrm{e}{-05}\) \\
		\(100\)
		& \(3\) & \(0.0163\) & \(1.8822\mathrm{e}{-07}\) & \(2.6296\mathrm{e}{-05}\)
		& \(3\) & \(0.0492\) & \(3.9550\mathrm{e}{-07}\) & \(5.1655\mathrm{e}{-05}\)
		& \(3\) & \(0.0586\) & \(1.4056\mathrm{e}{-07}\) & \(1.6508\mathrm{e}{-05}\) \\
		\(150\)
		& \(3\) & \(0.0555\) & \(1.8556\mathrm{e}{-07}\) & \(3.9036\mathrm{e}{-05}\)
		& \(3\) & \(0.1818\) & \(3.5021\mathrm{e}{-07}\) & \(6.9710\mathrm{e}{-05}\)
		& \(3\) & \(0.2629\) & \(1.1589\mathrm{e}{-07}\) & \(2.0491\mathrm{e}{-05}\) \\
		\(200\)
		& \(3\) & \(0.0957\) & \(1.8419\mathrm{e}{-07}\) & \(5.1770\mathrm{e}{-05}\)
		& \(3\) & \(0.3033\) & \(3.2498\mathrm{e}{-07}\) & \(8.7254\mathrm{e}{-05}\)
		& \(3\) & \(0.1345\) & \(1.0121\mathrm{e}{-07}\) & \(2.3946\mathrm{e}{-05}\) \\
		\(250\)
		& \(3\) & \(0.5126\) & \(1.8336\mathrm{e}{-07}\) & \(6.4502\mathrm{e}{-05}\)
		& \(3\) & \(0.5386\) & \(3.0878\mathrm{e}{-07}\) & \(1.0454\mathrm{e}{-04}\)
		& \(3\) & \(0.2065\) & \(9.1227\mathrm{e}{-08}\) & \(2.7074\mathrm{e}{-05}\) \\
		
		\bottomrule
	\end{tabular*}
\end{table}

	Table~\ref{tab:ex6-comparison} gives the results for the ridge-regression
	problem in Example~\ref{Ex6}. All the methods converge for every tested
	case, but RMNSI requires the least work and is generally the fastest.
	Its iteration count remains nearly unchanged as the problem size increases.
	Moreover, its accuracy improves as $\mu$ increases and is particularly
	strong for $\mu=4$. Although GGS also uses relatively few iterations,
	its running time grows rapidly for large systems. These results show
	that RMNSI is also effective for an application-based GAVE.

	\begin{figure}[width=\linewidth,pos=H]
		\centering
		\includegraphics[width=\linewidth]{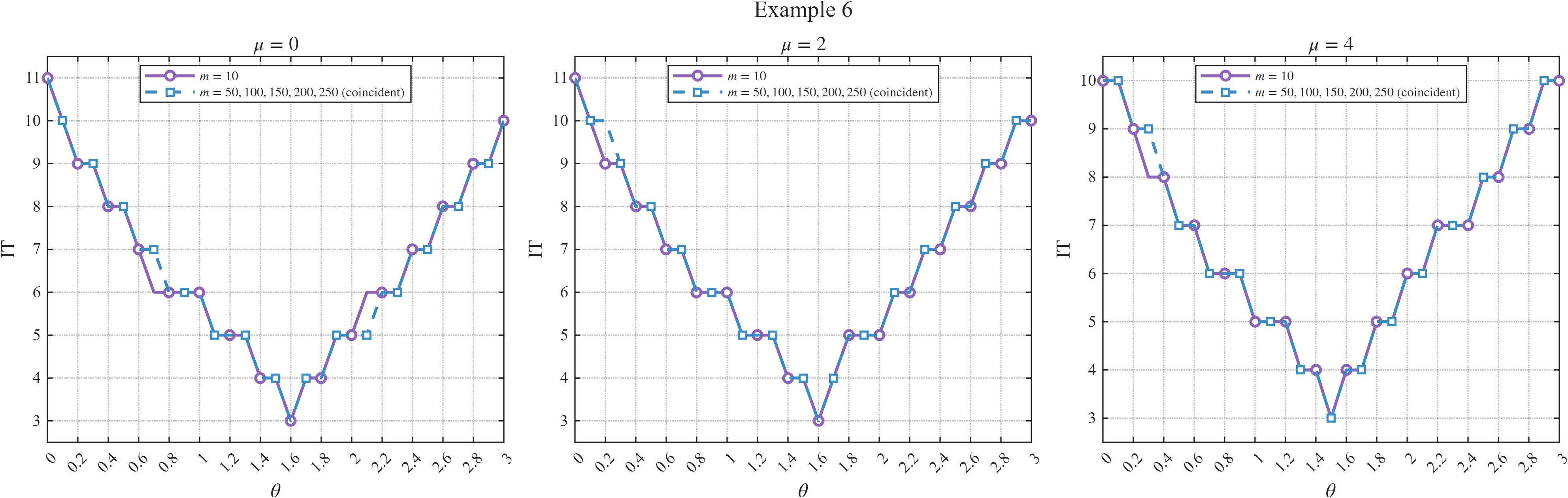}
		\caption{\(\mathrm{Effect~of~the~relaxation~parameter~\theta~on~the~iteration~
				count~of~RMNSI~for~Example~\ref{Ex6}~with~different~values~of~m.}\)}
		\label{fig:theta-ex6}
	\end{figure}

	For the ridge-regression problem in Example~\ref{Ex6},
	Figure~\ref{fig:theta-ex6} depicts a broad valley-shaped response to
	$\theta$. Its shape changes only slightly with $\mu$, and the close
	curves indicate that the preferred parameter is largely unaffected by
	the matrix dimension.
	
Overall, the numerical results show that a small iteration count does
not necessarily correspond to a low computational cost. Although GGS
is parameter-free and often converges in relatively few iterations for
positive values of $\mu$, its CPU time grows considerably with the
problem dimension, and it fails in several unshifted cases. FPI-SS is competitive for some positively shifted problems, but its performance depends on the joint choice of $\alpha$ and $\omega$ and it performs less reliably in several tests with $\mu=0$. M3 performs particularly well for the tested problems whose exact solutions have a uniform sign pattern. FPM-M is effective in several shifted cases,
but it requires additional auxiliary quantities and does not perform
uniformly across all the test problems. In comparison, RMNSI provides
consistent convergence over the tested matrix structures, shifts,
problem dimensions, and solution sign patterns while requiring only
the relaxation parameter $\theta$.


\section{Conclusions and future directions}\label{sec6}

A relaxed maximum-based normal $S$-iteration method has been developed
for solving generalized absolute value equations. By combining the
maximum-based reformulation with a two-stage fixed-point process, the
selection of an auxiliary matrix is avoided while both under-relaxed
and over-relaxed updates are permitted. Moreover, the two linear
systems arising at each iteration have the same coefficient matrix
$A+B$, so that one factorization can be reused throughout the
computation.

Based on the theoretical analysis and numerical experiments, the
following conclusions can be drawn:
\begin{enumerate}
	\item The condition
	$\chi_\theta=\eta(|1-\theta|+\theta\eta)<1$ guarantees both unique
	solvability of the GAVE and global convergence of RMNSI from an
	arbitrary initial vector. Under this condition, $R$-linear convergence
	and residual- and iteration-based error estimates are obtained.
	Norm-based, $M$-matrix, and diagonal-dominance conditions provide more
	directly verifiable alternatives.
	
	\item The global bound is minimized at $\theta=1$, but this value does
	not necessarily produce the smallest iteration count. Once the sign
	pattern near a solution is fixed, the local behaviour is governed by a
	sharper iteration matrix. This explains why over-relaxed values of
	$\theta$ can perform well and why convergence can be observed even when
	the global sufficient condition is not satisfied.
	
	\item The numerical results show that RMNSI provides a favourable
	balance among iteration count, computational time, and accuracy for
	the tested complementarity-derived, dense, and ridge-regression
	problems. The preferred value of $\theta$ is found to depend mainly on
	the matrix structure and the diagonal shift, while its dependence on
	the problem dimension is generally weak. Positive shifts enlarge the
	observed convergence range in several test problems, whereas only a
	small change is observed for the ridge-regression example.
\end{enumerate}

The present analysis assumes that $A+B$ is nonsingular and, for the
local result, that the solution has no zero components. Future work
will consider adaptive selection of $\theta$, local analysis for
solutions containing zero components, and implementations based on
inexact linear solves for large-sparse systems. Extensions to
rectangular GAVEs, generalized absolute value matrix equations, and
related complementarity problems will also be investigated.

\section*{Acknowledgements}
The first author acknowledges the support of the Ministry of Education, Government of India, through the GATE fellowship (Registration No. MA22S25023148).

\printcredits

	\bibliographystyle{elsarticle-num}
	\bibliography{cas-refs}

\end{document}